\documentclass[11pt]{article}

\usepackage[margin=1.1in]{geometry}
\usepackage[T1]{fontenc}
\usepackage[utf8]{inputenc}
\usepackage{microtype}
\usepackage{amsmath,amssymb,amsthm,mathtools}
\usepackage{thmtools}
\usepackage{bm}
\usepackage{booktabs}
\usepackage{multirow}
\usepackage{enumitem}
\usepackage{graphicx}
\usepackage{tikz}
\usetikzlibrary{positioning,arrows.meta,calc}
\usepackage[numbers,sort&compress]{natbib}
\usepackage[colorlinks=true,linkcolor=blue!60!black,citecolor=blue!60!black,urlcolor=blue!60!black]{hyperref}
\usepackage[capitalize,noabbrev]{cleveref}
\usepackage[ruled,vlined,linesnumbered]{algorithm2e}
\crefname{algocf}{Algorithm}{Algorithms}
\Crefname{algocf}{Algorithm}{Algorithms}
\usepackage{xcolor}

\ifdefined\JOTAmode
  \spdefaulttheorem{assumption}{Assumption}{\bfseries}{\rmfamily}
\else
  \declaretheorem[style=plain,numberwithin=section,name=Theorem]{theorem}
  \declaretheorem[style=plain,sibling=theorem,name=Lemma]{lemma}
  \declaretheorem[style=plain,sibling=theorem,name=Proposition]{proposition}
  \declaretheorem[style=plain,sibling=theorem,name=Corollary]{corollary}
  \declaretheorem[style=definition,sibling=theorem,name=Definition]{definition}
  \declaretheorem[style=definition,sibling=theorem,name=Assumption]{assumption}
  
  \declaretheorem[style=remark,sibling=theorem,name=Remark]{remark}

  \AtBeginDocument{%
  }
\fi

\crefname{appendix}{Appendix}{Appendices}
\Crefname{appendix}{Appendix}{Appendices}

\crefname{theorem}{Theorem}{Theorems}
\Crefname{theorem}{Theorem}{Theorems}
\crefname{lemma}{Lemma}{Lemmas}
\Crefname{lemma}{Lemma}{Lemmas}
\crefname{proposition}{Proposition}{Propositions}
\Crefname{proposition}{Proposition}{Propositions}
\crefname{corollary}{Corollary}{Corollaries}
\Crefname{corollary}{Corollary}{Corollaries}
\crefname{definition}{Definition}{Definitions}
\Crefname{definition}{Definition}{Definitions}
\crefname{assumption}{Assumption}{Assumptions}
\Crefname{assumption}{Assumption}{Assumptions}
\crefname{example}{Example}{Examples}
\Crefname{example}{Example}{Examples}
\crefname{remark}{Remark}{Remarks}
\Crefname{remark}{Remark}{Remarks}

\newcommand{\R}{\mathbb{R}}
\newcommand{\E}{\mathbb{E}}
\newcommand{\Prob}{\mathbb{P}}
\newcommand{\X}{\mathcal{X}}
\newcommand{\Z}{\mathcal{Z}}
\newcommand{\F}{\mathcal{F}}
\newcommand{\inner}[2]{\langle #1,\, #2\rangle}
\newcommand{\norm}[1]{\lVert #1\rVert}
\newcommand{\abs}[1]{\lvert #1\rvert}
\newcommand{\ind}[1]{\mathbf{1}\{#1\}}
\newcommand{\Ot}{\widetilde{O}}          
\newcommand{\Tht}{\widetilde{\Theta}}    
\DeclareMathOperator*{\argmin}{arg\,min}
\DeclareMathOperator{\esssup}{ess\,sup}
\DeclareMathOperator{\Var}{Var}
\DeclareMathOperator{\Cov}{Cov}
\DeclareMathOperator{\LMO}{LMO}
\newcommand{\dtv}{d_{\mathrm{TV}}}
\newcommand{\dmix}{d_{\mathrm{mix}}}
\newcommand{\tmix}{\tau_{\mathrm{mix}}}

\newcommand{\gap}{\mathcal{G}}           
\newcommand{\Gs}{G_{\sigma}}             
\newcommand{\Ghat}{\widehat{G}}          
\newcommand{\Lam}{\Lambda}               
\newcommand{\sM}{\sigma_{\mathrm{M}}}    
\newcommand{\LM}{L_{\mathrm{M}}}         
\newcommand{\sbar}{\bar{s}}
\newcommand{\dbar}{\bar{\delta}}
\newcommand{\dbarM}{\bar{\delta}_{\mathrm{M}}}
\newcommand{\kapM}{\kappa_{\mathrm{M}}}
\newcommand{\BdotM}{\dot{B}_{\mathrm{M}}}
\newcommand{\BddotM}{\ddot{B}_{\mathrm{M}}}
\newcommand{\Bdot}{\dot{B}}
\newcommand{\Bddot}{\ddot{B}}
\newcommand{\Rcal}{\mathcal{R}}          
\newcommand{\Scal}{\mathcal{S}}          
\newcommand{\Lcal}{\mathcal{L}}          
\newcommand{\mlmc}[1]{\hat{\mu}^{\mathrm{MLMC}}_{#1}}
\newcommand{\ghat}{\hat{g}}
\newcommand{\gpre}{g_t^{\mathrm{pre}}}
\newcommand{\spre}{s_t^{\mathrm{pre}}}
\newcommand{\clip}{\Pi_{\Ghat}}
\newcommand{\etabar}{\bar{\eta}}
\newcommand{\jmax}{j_{\max}}
\newcommand{\algname}{\textsf{MC-ALFCG}}
\newcommand{\basealg}{\textsf{ALFCG}}
\newcommand{\basevar}{\textsf{ALFCG-MVR2}}

\newcommand{\paperfitdisplay}[1]{%
  \ifdefined\JOTAmode
    \resizebox{\linewidth}{!}{$\displaystyle #1$}%
  \else
    #1%
  \fi
}

\newcommand{\stepmark}[1]{\raisebox{-1pt}{\textcircled{\raisebox{-0.6pt}{\scriptsize #1}}}}
\newcommand{\ceq}[1]{\overset{\stepmark{#1}}{=}}
\newcommand{\cle}[1]{\overset{\stepmark{#1}}{\le}}
\newcommand{\cge}[1]{\overset{\stepmark{#1}}{\ge}}

\title{\bf Variance-Reduced Conditional Gradient Methods under\\ Markovian Sampling for Nonconvex Composite Optimization}

\author{
  Zhaojun Peng\\
  School of Computer Science\\
  Nanjing University of Information Science and Technology\\
  \texttt{202592020013@nuist.edu.cn}
}

\hypersetup{
  pdftitle={Variance-Reduced Conditional Gradient Methods under Markovian Sampling for Nonconvex Composite Optimization},
  pdfauthor={Zhaojun Peng, Yuhang Zhang},
  pdfsubject={Stochastic nonconvex optimization under Markovian sampling},
  pdfkeywords={stochastic nonconvex optimization, Markovian sampling,
    variance reduction, conditional gradient method, multilevel Monte Carlo}
}

\begin{document}
\maketitle

\begin{abstract}
We study stochastic composite nonconvex optimization over a compact convex
set when gradient samples arrive along a single trajectory of a fixed
ergodic Markov chain. Existing single-trajectory variance-reduction theory
covers smooth unconstrained objectives. We address the distinct
projection-free composite setting and use the generalized Frank--Wolfe gap
as the stationarity measure. We propose \algname{} (Markov-Chain Adaptive
Lipschitz-Constant-Free Conditional Gradient), which combines a
momentum-based conditional-gradient method with coupled capped multilevel
Monte Carlo estimation and per-iteration clipping. The deepest nested
average uses consecutive states from the same trajectory, giving conditional
bias $O(\tmix/T)$ uniformly over the starting chain state, while coupling
makes the gradient-difference second moment depend on the iterate
displacement. Clipping enforces the pathwise bounds required by the adaptive
analysis. We prove a reduction to the independent-sampling recursion under
$\sigma^2\mapsto2\Lam\Gs^2$ and $L^2\mapsto2\Lam L^2$, where
$\Lam=O(\tmix\log T)$. For positive centered noise, the tuned method attains
expected sample complexity
$\Ot((\tmix^{2}\Gs+\tmix^{5/2}\Gs^2)\varepsilon^{-3}
+\tmix^{5}\varepsilon^{-2})$ to obtain an expected generalized
Frank--Wolfe gap at most $\varepsilon$. An exactly noiseless specialization
uses a separate parameter choice and attains $\Ot(\varepsilon^{-2})$ with
mixing-time-free constants. A mixing-time-oblivious variant, which also
requires no centered noise bound, attains
$\Ot(\tmix^{6}\varepsilon^{-3}+\tmix^{3}\varepsilon^{-2})$.
No variant requires the smoothness constant. The tuned method requires
certified mixing-time and noise bounds, clipping requires a known
$\Ghat\ge G$, and all complexity statements are in expectation under a
fixed transition kernel. For fixed $D>0$ and $\Gs>0$, a parameter-tuning
lower bound matches the leading mixing exponent within the transferred
analysis. Two controlled numerical studies examine dependence sensitivity,
a genuinely nonconvex composite instance, and the clipping mechanism without
claiming absolute empirical superiority.

\medskip
\noindent\textbf{Keywords:} Nonconvex Optimization, Stochastic
Optimization, Markov Chains, Variance Reduction, Conditional Gradient
Method, Frank--Wolfe Algorithm, Multilevel Monte Carlo, Convergence
Analysis
\end{abstract}
\section{Introduction}\label{sec:intro}

We consider the constrained stochastic composite nonconvex problem
\begin{equation}\label{eq:problem}
  \min_{x\in\X\subset\R^n}\; F(x) \coloneqq f(x) + h(x),
  \qquad f(x) = \E_{z\sim\pi}\big[f(x;z)\big],
\end{equation}
where $\X$ is compact and convex, $h$ is proper, closed, and convex, and
$f$ is differentiable but possibly nonconvex. Samples
$z_1,z_2,\ldots$ are successive states of a single ergodic Markov chain
with stationary distribution $\pi$. The samples are consumed in order, without
restarts or independent resampling. We target settings in which Euclidean
projection onto $\X$ is expensive but the linearized composite subproblem
over $\X$ can be solved efficiently. This geometry motivates the composite
linear minimization oracle introduced in \cref{sec:setup}.

The fixed-chain streaming model arises when a data source cannot be reset
or shuffled, including optimization driven by Markov chain Monte Carlo,
learning from a single time series or sensor stream, dependent-data matrix
factorization, and system identification on a non-resettable system. It also
covers off-policy evaluation under a fixed behavior policy with bounded
importance ratios. In these settings, the stochastic gradient is
conditionally biased given the past, and this bias is correlated across
iterations. MaC-SPIDER shows that recursive variance reduction is possible
for smooth unconstrained objectives on a single Markovian trajectory
\citep{NEURIPS2025_1cdbce34}. The question addressed here is different: can
recursive variance reduction be combined with a projection-free composite
oracle and an adaptive conditional-gradient analysis?

This question must be distinguished from the episodic policy-gradient
model. In the episodic model, trajectories are independent across
iterations and Markovian only within an episode. Methods such as SVRPG,
STORM-PG, and ProxHSPGA exploit this cross-trajectory independence
\citep{papini2018svrpg,yuan2020stormpg,pham2020hybrid}. In the streaming
model studied here, dependence persists across iterations because all
samples belong to one chain. Existing results therefore settle adjacent
oracle models or optimization geometries, but not their projection-free
composite intersection. \Cref{tab:rates} summarizes this boundary.

\begin{table}[t]
\centering
\caption{Representative sample complexities in adjacent nonconvex oracle
and problem classes. The rows are not direct comparisons because the
feasible geometry, regularity assumptions, and stationarity measures
differ. ``Streaming Markov'' denotes a single trajectory of an ergodic
chain. Logarithmic factors are suppressed, and both bounds reported for
this paper are expectation guarantees. The compact oblivious expression
treats the finite centered-noise bound as a fixed problem constant.}
\label{tab:rates}
\vspace{0.4em}
\resizebox{\linewidth}{!}{%
\begin{tabular}{@{}llll@{}}
\toprule
Oracle & Method class & Complexity & Reference \\
\midrule
i.i.d. & SGD (no VR) & $\Ot(\varepsilon^{-4})$ & classical \\
i.i.d. & SPIDER / STORM (VR) & $\Ot(\varepsilon^{-3})$ (optimal) &
  \citep{fang2018spider,cutkosky2019momentum} \\
i.i.d. & \basevar{} (VR, projection-free) & $\Ot(\sigma\varepsilon^{-3}+\varepsilon^{-2})$ &
  \citep{yuan2026alfcg} \\
\midrule
streaming Markov & prox-SGD / AdaGrad / heavy-ball & $\Ot(\varepsilon^{-4})$ &
  \citep{alacaoglu2023convergence} \\
streaming Markov & MaC-SPIDER (VR, smooth and unconstrained) &
  $O(\tmix\pi_{\min}^{-1/2}\varepsilon^{-3})$ &
  \citep{NEURIPS2025_1cdbce34} \\
streaming Markov & projection-free composite CG with recursive VR &
  none known & \\
streaming Markov & \textbf{this paper} (VR, projection-free) &
  $\Ot\big((\tmix^{2}\Gs+\tmix^{5/2}\Gs^2)\varepsilon^{-3}
  +\tmix^{5}\varepsilon^{-2}\big)$ &
  \cref{thm:tuned} \\
streaming Markov & \textbf{this paper} (oblivious variant) &
  $\Ot\big(\tmix^{6}\varepsilon^{-3}+\tmix^{3}\varepsilon^{-2}\big)$ &
  \cref{thm:oblivious} \\
\bottomrule
\end{tabular}
}
\end{table}

Two obstacles prevent a direct replacement of the independent-sampling
oracle. First, a finite Markovian burst produces a state-dependent
conditional bias. Consequently, the gradient-difference estimator no
longer satisfies the conditional-unbiasedness identity used in standard
recursive variance-reduction arguments. The analysis must instead control
this bias within a conditional, contractive error recursion. The
multilevel Monte Carlo estimator of \citep{dorfman2022adapting} controls the
bias of a single gradient, but its conditional bias must still be absorbed
by the recursive estimator used here.

Second, capped multilevel sampling produces rare corrections whose
magnitude grows with the horizon. The resulting estimator has a useful
conditional second moment but no horizon-independent pathwise bound at the
scale required by the adaptive step-size analysis. In fact, its error can
have essential supremum of order $T\Gs$ (\cref{prop:esssup}). Conditional
moment control alone is therefore insufficient for transferring the base
conditional-gradient argument.

We address the two obstacles separately. The proposed method, \algname{}
(\cref{alg:main}), starts from the momentum variance-reduced method
\basevar{} of \citep{yuan2026alfcg}. A coupled multilevel estimator
evaluates consecutive iterates on the same burst of chain samples, so the
difference variance scales with the iterate displacement and the
conditional mean equals a sufficiently deep chain average. The chain thus
mixes within each burst, yielding conditional bias of order $\tmix/T$
uniformly in the initial chain state. We then project the recursive
gradient estimate onto a known ball of radius $\Ghat\ge G$. This clipping
step enforces the pathwise bound required by the adaptive analysis.

Together, the two modifications recover the structure of the
independent-sampling error recursion under the substitutions
\begin{equation*}
  \sigma^2 \;\longmapsto\; \sM^2 = 2\Lam\Gs^2,
  \qquad
  L^2 \;\longmapsto\; \LM^2 = 2\Lam L^2,
  \qquad
  \Lam = O(\tmix\log T).
\end{equation*}
Here $\Gs$ is a certified upper bound on the centered gradient noise. A
formal transfer result verifies the conditional recursion, the pathwise
interface, and the weighted cumulative-error bound required by the
adaptive descent analysis (\cref{thm:reduction}). Explicit propagation is
necessary because the effective noise and recursion smoothness now grow
with $\Lam$ rather than remaining fixed problem constants.

The resulting expected sample complexity for the generalized Frank--Wolfe
gap is
$\Ot((\tmix^{2}\Gs+\tmix^{5/2}\Gs^2)\varepsilon^{-3}
+\tmix^{5}\varepsilon^{-2})$ (\cref{thm:tuned}). The two stochastic
branches cover the low- and high-effective-noise regimes and both vanish
with $\Gs$. The exactly noiseless case uses a separate parameter choice
and attains $\Ot(\varepsilon^{-2})$ with mixing-time-free constants
(\cref{cor:zeronoise}). These conclusions preserve the
Lipschitz-constant-free property of the base method: the smoothness constant
does not enter the algorithm.

The transfer also yields a mixing-time-oblivious variant with expected
sample complexity
$\Ot(\tmix^{6}\varepsilon^{-3}+\tmix^{3}\varepsilon^{-2})$
(\cref{thm:oblivious}). For fixed $D>0$ and $\Gs>0$, an in-analysis lower
bound shows that retuning the free parameters cannot improve the
$\Lam^{5/6}$ exponent of the leading stochastic coefficient within the
transferred constant propagation (\cref{thm:lower}). This is neither an
algorithmic nor an information-theoretic lower bound, and it does not rule
out a sharper clipping-aware analysis. Two controlled numerical studies
further examine dependence sensitivity, a nonconvex composite instance, and
the clipping mechanism.

The tuned parameters require upper bounds on the mixing time and centered
noise level, whereas the oblivious variant removes both requirements at the
cost of worse mixing dependence. Clipping requires a known gradient-norm
bound. All sample-complexity guarantees are in expectation and assume a
fixed transition kernel. Decision-dependent sampling processes are outside
the model. \cref{sec:discussion} gives the precise scope of these
qualifications.

The remainder of the paper is organized as follows. \cref{sec:related}
reviews the closest literature, \cref{sec:setup} states the assumptions and
stationarity measure, and \cref{sec:algorithm} presents \algname{}.
\cref{sec:results} develops the convergence analysis, and
\cref{sec:conclusion} concludes the paper. A notation summary, complete
proofs, and detailed numerical results
\ifdefined\JOTAmode
are provided in Online Resource~1.
\else
are provided in the Appendix.
\fi
\section{Related Work}\label{sec:related}

The closest literature concerns stochastic optimization with Markovian
data, recursive variance reduction, projection-free composite optimization,
episodic policy-gradient methods, and clipping under heavy-tailed noise. We
organize the discussion by oracle model and problem geometry because these
distinctions determine whether an existing guarantee applies to the setting
considered here.

\paragraph{First-order methods with Markovian data.}
Early analyses include ergodic mirror descent \citep{duchi2012ergodic},
Markov-chain gradient descent \citep{sun2018markov}, and non-asymptotic
biased stochastic approximation \citep{karimi2019nonasymptotic}. For
nonconvex constrained problems, \citep{alacaoglu2023convergence} analyzes
projected SGD, AdaGrad-norm, heavy-ball, and proximal SGD with a convex
regularizer, obtaining $\Ot(\varepsilon^{-4})$ rates under a mixing
condition. These methods neither use recursive variance reduction nor call
a conditional-gradient oracle. The multilevel estimator of
\citep{dorfman2022adapting}, on which our estimator is based, gives
mixing-time-oblivious rates for smooth non-composite problems, including an
$\Ot(\varepsilon^{-4})$ nonconvex rate without variance reduction. Related
results include sharper SGD bounds \citep{even2023sgd}, convex
mirror-descent and variational-inequality methods
\citep{solodkin2024methods}, and stochastically constrained convex
optimization \citep{kim2023stochastic}.

Recursive variance reduction under Markovian sampling was posed as an open
direction in \citep{wang2022stability}. MaC-SPIDER subsequently resolved the
smooth unconstrained case using a single trajectory
\citep{NEURIPS2025_1cdbce34}. Under mean-squared smoothness, it attains
$O(\tmix\pi_{\min}^{-1/2}\varepsilon^{-3})$ and provides an
algorithm-independent $\Omega(\tau\varepsilon^{-3})$ lower bound in terms
of a hitting-time scale $\tau$. Our setting differs in its compact
constraint geometry, composite linear oracle, Lipschitz-constant-free
adaptation, and generalized Frank--Wolfe-gap stationarity measure.

\paragraph{Recursive variance reduction under independent sampling.}
SPIDER \citep{fang2018spider} and STORM \citep{cutkosky2019momentum} attain
the optimal $\Ot(\varepsilon^{-3})$ complexity for smooth nonconvex
problems, matching the lower bound in \citep{arjevani2023lower}. STORM+
\citep{levy2021storm} removes parameter knowledge, and composite extensions
include \citep{zhang2020prox}. Standard analyses use conditionally unbiased
gradient differences. A fixed-chain Markovian burst violates this identity,
so the resulting conditional bias must be controlled within the recursive
error analysis.

\paragraph{Conditional-gradient methods.}
The Frank--Wolfe method \citep{frank1956algorithm,jaggi2013revisiting}
avoids projection by calling a linear minimization oracle. Nonconvex
analysis began with \citep{lacoste2016convergence}. Stochastic and
variance-reduced variants include
\citep{reddi2016stochastic,hazan2016variance,mokhtari2020stochastic}. Our
base method is the adaptive Lipschitz-constant-free framework \basealg{} of
\citep{yuan2026alfcg}. Its momentum variant \basevar{} handles convex
composite terms through a linear oracle and attains
$\Ot(\sigma\varepsilon^{-3}+\varepsilon^{-2})$ for the generalized
Frank--Wolfe gap under independent sampling. A related line develops
Lipschitz-constant-free proximal variance-reduced methods
\citep{yuan2025aepg}. We extend the conditional-gradient rather than the
proximal framework because the compact feasible set and linear-oracle
geometry are central to the present Markovian construction.

Two recent general frameworks are especially close in methodology. An
abstract estimator analysis for stochastic Frank--Wolfe methods
\citep{nazykov2024stochastic} does not require unbiased gradient estimates,
but it does not instantiate that condition for a single fixed-chain Markov
trajectory or the composite LMO used here. A Lipschitz-constant-free step
strategy combined with stochastic and variance-reduced Frank--Wolfe
estimators \citep{nandhan2026boosted} likewise does not cover fixed-chain
Markov sampling or the present composite oracle. These results
therefore provide neighboring estimator frameworks rather than the
conditional recursion and pathwise adaptive interface established here.

\paragraph{Episodic variance-reduced policy gradient.}
SVRPG \citep{papini2018svrpg}, STORM-PG \citep{yuan2020stormpg}, and
ProxHSPGA \citep{pham2020hybrid} obtain variance-reduced policy-gradient
rates, with ProxHSPGA also handling composite objectives. Their trajectories
are independent across iterations and are coupled across parameters by
importance sampling. This episodic model differs from the fixed-chain
streaming model considered here, where temporal dependence persists across
iterations.

\paragraph{Clipping and heavy-tailed estimators.}
Gradient clipping is a standard safeguard for heavy-tailed stochastic
gradients \citep{zhang2020why,gorbunov2020clipped,cutkosky2021high}. In our
setting, the multilevel multiplier makes the raw estimator heavy-tailed,
whereas the adaptive descent analysis requires a pathwise error bound.
Clipping at a mixing-time-free radius enforces that interface and does not
increase the pointwise estimation error relative to the true gradient
(\cref{lem:clipsafe}). Multilevel estimation itself originates in
\citep{giles2008multilevel,blanchet2015unbiased}.

Existing results therefore cover either recursive variance reduction for
smooth unconstrained objectives on a single Markovian trajectory or
projection-free composite optimization under independent sampling. To our
knowledge, the remaining intersection addressed here has not been analyzed:
a fixed-chain streaming oracle, a composite linear minimization oracle,
Lipschitz-constant-free adaptation, and generalized Frank--Wolfe-gap
stationarity.
\section{Assumptions and Preliminaries}\label{sec:setup}

We first fix the notation, assumptions, oracle, and stationarity measure
used throughout the analysis.

\paragraph{Notation.}
$\norm{\cdot}$ is the Euclidean norm. For a convex set $B\subset\R^n$,
$\Pi_B$ denotes the Euclidean projection. For $r>0$ we write
$\Pi_r \coloneqq \Pi_{\{g:\norm{g}\le r\}}$.
$\E_{t-1}[\cdot] \coloneqq \E[\cdot\mid\F_{t-1}]$ denotes conditional
expectation with respect to the filtration defined in
\cref{sec:algorithm}. $\Ot(\cdot)$ and $\Tht(\cdot)$ hide polylogarithmic
factors in $T$, $\tmix$, and $1/\varepsilon$, and $\log \coloneqq \log_2$.
A summary of all recurring symbols is provided in \cref{app:notation}.

\paragraph{Assumptions.}
We adopt the following assumptions regarding problem structure, function
regularity, and oracle capabilities.

\begin{assumption}[Problem structure]\label{asm:problem}
$\X\subset\R^n$ is compact and convex with diameter
$D \coloneqq \sup_{x,y\in\X}\norm{x-y}$. The function
$h:\R^n\to\R\cup\{+\infty\}$ is proper, closed, and convex, is finite and
$G$-Lipschitz on $\X$, and $x_0\in\X\cap\operatorname{dom}h$. The feasible domain is
nonempty, the value $F^\ast\coloneqq\inf_{x\in\X}F(x)$ is finite, and it
is attained at some $x^\ast\in\X$. A sufficient condition for the stated
Lipschitz property is $\partial h(x)\ne\varnothing$ and
$\norm{\zeta}\le G$ for every $x\in\X$ and $\zeta\in\partial h(x)$.
\end{assumption}

\begin{assumption}[Individual smoothness]\label{asm:smooth}
For every $z\in\Z$, $f(\cdot\,;z)$ is differentiable on $\X$ and
$\norm{\nabla f(x;z)-\nabla f(y;z)}\le L\norm{x-y}$ for all $x,y\in\X$.
Moreover, differentiation and stationary expectation commute:
$\E_\pi[\nabla f(x;z)]=\nabla f(x)$ for every $x\in\X$.
Consequently $f=\E_\pi[f(\cdot\,;z)]$ is $L$-smooth.
\end{assumption}

\begin{assumption}[Bounded gradients and known clipping radius]\label{asm:bounded}
$\norm{\nabla f(x)}\le G$ for all $x\in\X$, and the algorithm has access to a
constant $\Ghat$ with $\Ghat\ge G$.
\end{assumption}

We use $G$ as a common upper bound for the Lipschitz modulus of $h$ on
$\X$ and for $\norm{\nabla f}$ on $\X$. This is without loss after
replacing separate finite bounds by their maximum. The clipping radius
$\Ghat$ must bound this common constant.

\begin{assumption}[Bounded centered noise]\label{asm:noise}
There exists a finite constant $\Gs$ satisfying
$\sup_{x\in\X,\,z\in\Z}\norm{\nabla f(x;z)-\nabla f(x)}\le\Gs$.
\end{assumption}

\begin{assumption}[Markovian oracle]\label{asm:markov}
$(z_k)_{k\ge0}$ is a time-homogeneous ergodic Markov chain on $\Z$ with
transition kernel $P$ and stationary distribution $\pi$. Its mixing behavior
is measured by
\begin{equation*}
  \dmix(k) \coloneqq \sup_{z\in\Z}\,
  \dtv\big(P^{k}(z,\cdot),\,\pi\big),
  \qquad
  \tmix \coloneqq \min\{k\ge1:\dmix(k)\le\tfrac14\},
\end{equation*}
with $\tmix<\infty$. As usual $\dmix$ is non-increasing and
$\dmix(\ell\,\tmix)\le2^{-\ell}$ for all $\ell\ge1$
\citep{levin2017markov}, whence the summability bound
$\sum_{k\ge1}\dmix(k)\le2\tmix$ that we use repeatedly.
The initial state may be deterministic or random and need not have law
$\pi$.
\end{assumption}

For the tuned method, a certified integer bound
$\widehat\tau_{\mathrm{mix}}\ge\tmix$ may replace $\tmix$ in all tuned
parameters and guarantees. The resulting complexity is then expressed in
$\widehat\tau_{\mathrm{mix}}$. The displayed rates use the canonical
$\tmix$ for readability.

Individual smoothness, rather than smoothness only in expectation, is also
assumed by the base method \basevar{} \citep{yuan2026alfcg}. It is needed
because the recursive estimator evaluates two iterates on the same sample.
Assumption~\ref{asm:noise} instead controls centered gradient noise. The quantity $\Gs$
is distinct from the common deterministic bound $G$.

The tuned method uses a certified value of $\Gs$, while clipping uses
$\Ghat$ from \cref{asm:bounded}. Neither quantity is a smoothness constant.
The analysis does not assume that the chain starts in stationarity or that
the iterate sequence is ergodic. The oblivious variant of
\cref{thm:oblivious} requires knowledge of neither $\tmix$ nor $\Gs$.

\paragraph{Linear minimization oracle (LMO).}
Following \citep{yuan2026alfcg}, we assume access to an oracle that, for
any $g\in\R^n$ (typically a gradient estimate), returns a point in the
solution set
\begin{equation*}
  \LMO(g) \coloneqq \argmin_{v\in\X}\;\inner{v}{g}+h(v) .
\end{equation*}

\paragraph{Stationarity measure.}
Since $F(\cdot)$ is possibly nonconvex, we use the generalized
Frank--Wolfe gap as the stationarity measure.

\begin{definition}[FW gap]\label{def:fwgap}
For any $x\in\X$, the generalized Frank--Wolfe gap is defined as
\begin{equation}\label{eq:fwgap}
  \gap(x) \coloneqq \inner{\nabla f(x)}{x-\tilde v}
            + h(x)-h(\tilde v),
  \qquad \tilde v\in\LMO(\nabla f(x)),
\end{equation}
i.e., the largest decrease of the linearized objective over $\X$.
\end{definition}

The gap is nonnegative, and $\gap(x)=0$ if and only if $x$ is a
first-order stationary point of \cref{eq:problem} in the directional sense.
On a compact domain it is the natural projection-free analogue of the
gradient mapping. When the population gradient is available, one call to
the LMO evaluates the gap. Under the stochastic oracle it serves as the
population stationarity measure used by the analysis. A point $x\in\X$
with $\gap(x)\le\varepsilon$ is called an $\varepsilon$-approximate
stationary point.
\section{The Proposed Algorithm}\label{sec:algorithm}

The Markov-Chain Adaptive Lipschitz-Constant-Free Conditional Gradient
(\algname{}) method modifies \basevar{} \citep{yuan2026alfcg} in two
respects. It replaces each independent mini-batch by a coupled capped
multilevel burst drawn from the current Markov-chain state, and it clips the
resulting recursive gradient estimate to a known ball. All adaptive
quantities are computed before the burst and are therefore measurable with
respect to the past filtration. The remaining conditional-gradient update
is inherited from the base method. \cref{sec:mlmc,sec:clipping} describe the
new components, and \cref{sec:adaptive} summarizes the inherited adaptive
quantities.

\begin{algorithm}[t]
\DontPrintSemicolon
\caption{\algname{}: Markov-chain adaptive Lipschitz-constant-free
conditional gradient with momentum variance reduction}
\label{alg:main}
\KwIn{horizon $T$, parameters $\rho>0$ and $\beta>0$, clipping radius
$\Ghat\ge G$, initial point $x_0\in\X$, and chain state $z_0$.}
\textbf{Initialize:} $x_{-1}=x_0$, $g_{-1}=0$, $\alpha_0=1$,
$\jmax=\lfloor\log T\rfloor$\;
\For{$t=0,1,\dots,T$}{
  \tcp{S1) adaptive constants (all $\F_{t-1}$-measurable)}
  \lIf{$t=0$}{$\hat\alpha_t=\alpha_t=1$}
  \lElse{$u_i \coloneqq \beta+L_i^2\norm{x_{i+1}-x_i}^2$,\quad
  $\hat\alpha_t=\Big(\frac{1+\max_{i<t}u_i}{1+\sum_{i<t}u_i}\Big)^{2/3}$,\quad
  $\alpha_t=\min_{0\le k\le t}\hat\alpha_k$}
  $L_t=\rho\big(1+\sum_{i<t}L_i^2\norm{x_{i+1}-x_i}^2\big)^{1/2}\alpha_t^{-1/4}$\;
  \tcp{S2) coupled MLMC burst (\cref{eq:mlmc})}
  draw $J_t\sim\mathrm{Geom}(\tfrac12)$ independently of $\F_{t-1}$ and the
  subsequent transition randomness and advance the chain by $N_t$ steps\;
  for $y\in\{x_t,x_{t-1}\}$ compute
  $\ghat_t(y)\coloneqq\mlmc{t}[\nabla f(y;\cdot)]$\;
  \tcp{S3) momentum variance reduction (STORM-type, no anchor) + clipping}
  $\gpre=(1-\alpha_t)\big(g_{t-1}-\ghat_t(x_{t-1})\big)+\ghat_t(x_t)$,\quad
  $g_t=\clip(\gpre)$\;
  \tcp{S4) composite LMO and adaptive short step}
  $v_t\in\LMO(g_t)$\;
  \lIf{$v_t=x_t$}{$\etabar_t=0$}
  \lElse{$\etabar_t=\min\Big(
     \frac{h(x_t)-h(v_t)-\inner{g_t}{v_t-x_t}}{L_t\norm{v_t-x_t}^2},\,1\Big)$}
  $x_{t+1}=x_t+\etabar_t\,(v_t-x_t)$\;
}
\KwOut{$x_{\hat t}$ with $\hat t$ drawn independently and uniformly from
$\{0,\dots,T\}$}
\end{algorithm}

\subsection{Gradient Estimation Mechanism: Coupled Capped MLMC}
\label{sec:mlmc}

Fix a horizon $T$ and let $\jmax \coloneqq \lfloor\log T\rfloor$, so that
$2^{\jmax}\in[T/2,\,T]$. At iteration $t$ the algorithm draws
$J_t\sim\mathrm{Geom}(\tfrac12)$ (i.e.\ $\Prob[J_t=j]=2^{-j}$, $j\ge1$),
independently of $\F_{t-1}$ and of the subsequent chain-transition
randomness, and then advances the chain by
\begin{equation*}
  N_t \coloneqq
  \begin{cases}
    2^{J_t}, & J_t\le\jmax,\\
    1, & J_t>\jmax,
  \end{cases}
\end{equation*}
steps, obtaining consecutive states $z_t^{(1)},\dots,z_t^{(N_t)}$ (the chain
continues from wherever the previous iteration left it). For an
$\F_{t-1}$-measurable map $\varphi:\Z\to\R^n$ define the prefix averages and
the estimator
\begin{equation}\label{eq:mlmc}
\begin{aligned}
  \hat\mu^{j}[\varphi]
  &\coloneqq \frac{1}{2^{j}}\sum_{i=1}^{2^{j}}
  \varphi\big(z_t^{(i)}\big),\\
  \mlmc{t}[\varphi]
  &\coloneqq \hat\mu^{0}[\varphi] +
  \begin{cases}
    2^{J_t}\big(\hat\mu^{J_t}[\varphi]-\hat\mu^{J_t-1}[\varphi]\big),
      & J_t\le\jmax,\\
    0, & J_t>\jmax .
  \end{cases}
\end{aligned}
\end{equation}
Two implementation details are essential. First, $J_t$ is drawn
\emph{before} any samples: if $J_t>\jmax$ only the single base-level sample
is drawn. Without this cap the expected burst length would be
$\sum_j 2^{-j}\,2^{j}=\infty$. With it,
$\E[N_t]\le 1+\log_2 T$ (\cref{lem:mlmc}(d)).
Second, the momentum update evaluates the oracle at both $x_t$ and
$x_{t-1}$ \emph{on the same samples and the same $J_t$}. By linearity of
\cref{eq:mlmc},
\begin{equation*}
  \widehat{\Delta}_t \coloneqq \ghat_t(x_t)-\ghat_t(x_{t-1})
  = \mlmc{t}\big[\nabla f(x_t;\cdot)-\nabla f(x_{t-1};\cdot)\big],
\end{equation*}
so the gradient-\emph{difference} estimate is itself an MLMC estimator of a
function whose centered magnitude is at most $2L\norm{x_t-x_{t-1}}$ by
individual smoothness (\cref{asm:smooth}). The coupling does not increase
the number of chain transitions because \basevar{} already evaluates two
iterates on each sampled batch. It makes the variance of the difference
proportional to the squared displacement.

Throughout, sample complexity counts consumed Markov-chain states
(equivalently, chain transitions up to the initial state). Each consumed
state supports the two coupled gradient evaluations at $x_t$ and
$x_{t-1}$, so the number of stochastic gradient evaluations is at most
twice the reported sample count. This constant factor does not alter any
displayed rate.

\paragraph{Filtration.}
$\F_{t-1}$ collects everything generated before the $t$-th burst: all
iterates, all previous bursts and $J_s$ ($s<t$), and the current chain state.
In particular, $\F_{-1}$ contains the initial point and initial chain state.
it need not be trivial.
Consequently, $x_t$, $x_{t-1}$, $g_{t-1}$, $\alpha_t$, and $L_t$ are
$\F_{t-1}$-measurable. The ordering in \cref{alg:main} ensures this
property. The burst at iteration $t$ starts from the
$\F_{t-1}$-measurable chain state.

\subsection{The Clipping Safeguard}\label{sec:clipping}

The analysis of the adaptive stepsize machinery consumes a bound on the
error $s_t\coloneqq g_t-\nabla f(x_t)$ \emph{along each sample path} (see
\cref{sec:results}). The clipping step supplies the pathwise interface
required by the transferred analysis. For the unclipped estimator no such
bound exists at any useful scale, as the following proposition shows.

\begin{proposition}[Heavy tail of the raw estimator, proof in
\cref{app:pathwise}]\label{prop:esssup}
There is a two-state chain and an oracle satisfying
\cref{asm:smooth,asm:bounded,asm:noise,asm:markov} such that the unclipped
initial error $s_0^{\mathrm{pre}}=\ghat_0(x_0)-\nabla f(x_0)$ obeys
$\esssup\norm{s_0^{\mathrm{pre}}}\ge(T/2-1)\,\Gs$ for every $T\ge2$.
\end{proposition}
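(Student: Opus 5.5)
We construct the example explicitly and evaluate the estimator on a single positive-probability event. Take $\Z=\{a,b\}$ and the deterministic-alternation chain perturbed slightly so that it is ergodic. A simple choice is $P(a,a)=P(b,b)=p$ and $P(a,b)=P(b,a)=1-p$, with $p\in(0,1)$ fixed, say $p=1/2$. This chain is aperiodic and irreducible with $\pi=(1/2,1/2)$, so $\tmix<\infty$ and \cref{asm:markov} holds. Let $n=1$, $\X=[0,1]$, and $f(x;a)=\Gs x$, $f(x;b)=-\Gs x$, so that $f\equiv0$ and $\nabla f\equiv 0$. Then \cref{asm:smooth} holds with $L=0$ (and trivially any $L$), and \cref{asm:bounded} holds with any $G\ge0$. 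For \cref{asm:noise}, $\norm{\nabla f(x;z)-\nabla f(x)}=\Gs$ exactly. Consequently $s_0^{\mathrm{pre}}=\ghat_0(x_0)=\mlmc{0}[\varphi]$ with $\varphi(a)=\Gs$ and $\varphi(b)=-\Gs$.

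Fix $j=\jmax=\lfloor\log T\rfloor$, which is $\ge1$ for $T\ge2$. The event $\{J_0=\jmax\}$ has probability $2^{-\jmax}>0$. Start the chain at $z_0$ (any state) and consider the sample path on which $z_0^{(1)}=\dots=z_0^{(2^{j-1})}=a$ and $z_0^{(2^{j-1}+1)}=\dots=z_0^{(2^j)}=b$. Since every transition has probability at least $\min(p,1-p)>0$, this path has positive probability, independently of $J_0$. If the initial state is random, we condition on it or note that the transition probabilities are positive from either state. On this event,
\[
\hat\mu^0=\Gs,\qquad \hat\mu^{j-1}=\Gs,\qquad \hat\mu^{j}=0,
\]
so
\[
\mlmc{0}[\varphi]=\Gs+2^{j}(0-\Gs)=(1-2^{j})\Gs .
\]
Hence $\norm{s_0^{\mathrm{pre}}}=(2^{\jmax}-1)\Gs\ge(T/2-1)\Gs$, using $2^{\jmax}\ge T/2$. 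Since this value is attained with positive probability, the essential supremum bound follows.

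The only care needed is bookkeeping: check that the two-state oracle satisfies each listed assumption, including the Lipschitz requirement on $h$. Take $h\equiv0$, which does not enter $s_0^{\mathrm{pre}}$. Also confirm the indexing convention in \cref{eq:mlmc}: $\hat\mu^0$ uses only $z^{(1)}$, and the geometric law has $\Prob[J=j]=2^{-j}$, $j\ge1$, so $J_0=\jmax\le\jmax$ is allowed. The case $\Gs=0$ is trivial, since the bound is then $0$. The key step is choosing the sample pattern so that the level-$j$ correction does not cancel. Making the first half of the burst all $a$ and the second half all $b$ maximizes $\abs{\hat\mu^j-\hat\mu^{j-1}}=\Gs$.
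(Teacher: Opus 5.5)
Your proposal is correct and essentially matches the paper's proof. The paper also uses a symmetric two-state chain with oracle $\nabla f(x;z)=\nabla f_0(x)+\Gs z\,u$, and the same positive-probability event: $J_0=\jmax$, with the first half of the burst in one state and the second half in the other, which gives $\norm{s_0^{\mathrm{pre}}}=(2^{\jmax}-1)\Gs\ge(T/2-1)\Gs$. Your choice of $f_0\equiv0$ in dimension one, with any $p\in(0,1)$, is a special case of the paper's instance and does not change the argument.
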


\noindent The event $J_0=\jmax$ combined with a half-and-half sign pattern
realizes the $2^{J}$ multiplier. Clipping at radius $\Ghat$ removes the
issue by construction: $\norm{g_t}\le\Ghat$, hence
$\norm{s_t}\le G+\Ghat\le2\Ghat$ and
$\delta_t\coloneqq L_t\norm{x_{t+1}-x_t}\le G+\Ghat$ almost surely
(\cref{lem:clipsafe}). Because the true gradient lies inside the clipping
ball, projection is non-expansive toward it,
$\norm{s_t}\le\norm{\spre}$ pointwise. Thus clipping does not increase the
pointwise estimation error relative to the true gradient, and every
conditional-moment upper bound remains valid. The radius $\Ghat$ is a fixed
problem constant that does \emph{not} depend on $\tmix$ or $T$. Thus, the
safeguard preserves mixing-time obliviousness. The corner convention
$v_t=x_t\Rightarrow\etabar_t\coloneqq0$ makes the short-step update
well-defined when its displayed quotient has zero denominator.

\subsection{Adaptive Lipschitz Estimation and the Short Step}
\label{sec:adaptive}

The adaptive quantities $\alpha_t$ and $L_t$ are inherited from
\basevar{}. The self-normalized weight $\alpha_t$ controls the momentum
recursion, whereas $L_t$ tracks curvature through past iterate
displacements without requiring prior knowledge of the global smoothness
constant. After the composite LMO returns $v_t$, the short-step rule
minimizes a quadratic upper model along the conditional-gradient direction.
Compactness of $\X$ then gives $\norm{x_{t+1}-x_t}\le D$. Moreover,
$\alpha_t\ge(t+1)^{-2/3}$, and the expected burst length is at most
$1+\log_2 T$. Hence expected sample complexity differs from iteration
complexity by only a logarithmic factor (proofs in
\cref{app:reduction}).

\paragraph{Parameter regimes.}
We analyze two positive-noise instantiations of \cref{alg:main}. The
\emph{tuned} regime sets $\rho=\sqrt{\Lam}$ and
$\beta=2\Lam\Gs^2$, where
$\Lam\coloneqq306\,\tmix(1+\log_2 T)$. It requires upper bounds on $\tmix$
and $\Gs$ and, for fixed $D>0$ and $\Gs>0$, attains the $\Lam^{5/6}$ exponent
of the leading stochastic coefficient. Within the transferred analysis,
that exponent cannot be improved by retuning $(\rho,\beta)$
(\cref{thm:tuned,thm:lower}). The \emph{oblivious} regime uses fixed
$\rho,\beta=\Theta(1)$ and requires neither bound
(\cref{thm:oblivious}). The exactly noiseless specialization uses
$\rho=\Theta(1)$ and $\beta=(T+1)^{-1}$ (\cref{cor:zeronoise}).
\section{Convergence and Sample Complexity}\label{sec:results}

This section provides the convergence analysis of \algname{}. Throughout,
\cref{asm:problem,asm:smooth,asm:bounded,asm:noise,asm:markov} are in
force, \algname{} is run for $T+1$ iterations, and we write
\begin{equation}\label{eq:effective}
\begin{aligned}
  \Lam &\coloneqq 306\,\tmix\,(1+\log_2 T),
  & \sM^2 &\coloneqq 2\Lam\Gs^2,\\
  \LM^2 &\coloneqq 2\Lam L^2,
  & T_0 &\coloneqq \big\lceil(128\,\tmix)^{3/4}\big\rceil .
\end{aligned}
\end{equation}
These quantities are, respectively, the second-moment inflation factor and
the effective noise, effective smoothness, and burn-in scales. We further use the
error and step magnitudes
\begin{equation*}
  s_t \coloneqq g_t-\nabla f(x_t),
  \qquad
  \delta_t \coloneqq L_t\norm{x_{t+1}-x_t} .
\end{equation*}

The analysis has four steps. We first establish conditional-bias and
second-moment bounds for the coupled estimator. We then use clipping to
obtain the pathwise bounds required by the adaptive step-size argument.
These ingredients yield a conditional error recursion with the same
structure as the independent-sampling recursion after two explicit
coefficient substitutions. Finally, we propagate the resulting constants
to obtain the tuned and oblivious complexities and characterize the best
leading mixing exponent obtainable by parameter tuning within this
analysis.

\subsection{Conditional Moments of the Coupled MLMC Estimator}
\label{sec:res-mlmc}

We begin with the structural properties of the estimator
\cref{eq:mlmc}. All moment bounds below are conditional on $\F_{t-1}$
and hold uniformly in the chain state from which the burst starts. No
ergodicity of the iterate process is invoked anywhere.

\begin{lemma}[MLMC conditional moments, proof in \cref{app:mlmc}]
\label{lem:mlmc}
Let $\varphi:\Z\to\R^n$ be $\F_{t-1}$-measurable with
$\bar\varphi\coloneqq\E_\pi[\varphi]$ and
$\widetilde H\coloneqq\sup_z\norm{\varphi(z)-\bar\varphi}$.
Then, for every realization of the $\F_{t-1}$-measurable chain state from
which the burst starts,
\begin{enumerate}[label=(\alph*),leftmargin=2.2em,itemsep=1pt,topsep=2pt]
\item $\E\big[\mlmc{t}[\varphi]\,\big|\,\F_{t-1}\big]
       =\E\big[\hat\mu^{\jmax}[\varphi]\,\big|\,\F_{t-1}\big]$
      \hfill (conditional telescoping).
\item $\big\lVert\E\big[\mlmc{t}[\varphi]\,\big|\,\F_{t-1}\big]-\bar\varphi\big\rVert
       \le 4\widetilde H\tmix/2^{\jmax}\le 8\widetilde H\tmix/T$.
\item $\E\big[\norm{\mlmc{t}[\varphi]-\bar\varphi}^2\,\big|\,\F_{t-1}\big]
       \le \Lam\,\widetilde H^2$.
\item $\E[N_t]\le 1+\log_2 T$.
\end{enumerate}
\end{lemma}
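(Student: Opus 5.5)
The plan is to condition on $\F_{t-1}$ throughout. This fixes $\varphi$ and the starting chain state $z$, and $J_t$ is independent of both $\F_{t-1}$ and the transition randomness. All four items then reduce to computations for a chain started at a deterministic state $z$ with an independent geometric level. Item (d) is immediate:
\[
\E[N_t]=\sum_{j\le\jmax}2^{-j}2^{j}+\Prob[J_t>\jmax]\le\jmax+1\le1+\log_2T.
\]
For (a), the samples $z_t^{(1)},\dots,z_t^{(2^{j})}$ on the event $J_t=j$ have the same conditional law as the first $2^j$ steps of a chain started at $z$, by independence of $J_t$. Hence
\[
\E[\mlmc{t}[\varphi]\mid\F_{t-1}]=m_0+\sum_{j=1}^{\jmax}2^{-j}2^{j}(m_j-m_{j-1})=m_{\jmax},
\]
where $m_j\coloneqq\E_z[\hat\mu^j[\varphi]]$. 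The sum telescopes.

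For (b), I will write $m_{\jmax}-\bar\varphi=2^{-\jmax}\sum_{i=1}^{2^{\jmax}}(\E_z\varphi(z^{(i)})-\bar\varphi)$. Each term is bounded by $2\widetilde H\,\dtv(P^{i}(z,\cdot),\pi)\le2\widetilde H\dmix(i)$, using that $\varphi-\bar\varphi$ has sup norm at most $\widetilde H$, and that the expectation of a function bounded by $\widetilde H$ differs between two laws by at most $2\widetilde H\,\dtv$. Summing with $\sum_k\dmix(k)\le2\tmix$ gives $4\widetilde H\tmix/2^{\jmax}$. Then $2^{\jmax}\ge T/2$ gives the second inequality.

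For (c), I use $\norm{a+b}^2\le2\norm a^2+2\norm b^2$ with $a=\hat\mu^0-\bar\varphi$, which is bounded by $\widetilde H^2$, and with $b$ the correction. Then
\[
\E\norm{b}^2=\sum_{j\le\jmax}2^{-j}4^{j}\,\E_z\norm{\hat\mu^j-\hat\mu^{j-1}}^2\le\sum_{j\le\jmax}2^{j}\cdot2\big(\E_z\norm{\hat\mu^j-\bar\varphi}^2+\E_z\norm{\hat\mu^{j-1}-\bar\varphi}^2\big).
\]
The key ingredient is a nonstationary variance bound for a chain average from an arbitrary start: $\E_z\norm{\hat\mu^{m}-\bar\varphi}^2\le C\widetilde H^2\tmix/2^{m}$ with an absolute constant $C$. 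To prove it, I will expand the square as $N^{-2}\sum_{i,k}\E_z\inner{\psi(z^{(i)})}{\psi(z^{(k)})}$ with $\psi=\varphi-\bar\varphi$ and $N=2^m$. For $i<k$, I condition on $z^{(i)}$. The Markov property gives $\norm{\E[\psi(z^{(k)})\mid z^{(i)}]}\le2\widetilde H\dmix(k-i)$, so the cross term is at most $2\widetilde H^2\dmix(k-i)$. Summing over $k>i$ gives at most $4\widetilde H^2\tmix$ per $i$. Together with the diagonal this yields $\E_z\norm{\hat\mu^m-\bar\varphi}^2\le N^{-1}\widetilde H^2(1+8\tmix)\le 9\tmix\widetilde H^2/N$. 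This bound needs no stationarity, so it holds uniformly in $z$.

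Plugging in, each level $j$ contributes $2^{j}\cdot2\cdot(9\tmix\widetilde H^2)(2^{-j}+2^{1-j})=54\tmix\widetilde H^2$. There are $\jmax\le\log_2T$ levels, so
\[
\E\norm{\mlmc{t}-\bar\varphi}^2\le2\widetilde H^2+2\cdot54\,\tmix\widetilde H^2\log_2T\le306\,\tmix(1+\log_2T)\widetilde H^2=\Lam\widetilde H^2,
\]
using $\tmix\ge1$. The constant $306$ leaves room for slack or a cruder version of the cross-term bound. I expect the main obstacle to be this step: the cross-covariance bound from an arbitrary, non-stationary start must be uniform in $z$, and the constants must be tracked carefully. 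Conditioning on the earlier index and invoking the uniform $\dmix$ is what avoids any need for stationarity.
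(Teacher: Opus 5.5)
Your proposal is correct and matches the paper's proof in all four parts, including the same uniform-in-start segment bound $\E_z\norm{\hat\mu^m-\bar\varphi}^2\le 9\tmix\widetilde H^2/2^m$, obtained by conditioning on the earlier index and using the supremum in $\dmix$. The only deviation is in (c): you compare the two nested prefix averages instead of the paper's disjoint halves, and you skip the trivial cap at levels $2^j\le\tmix$. This is valid, since each level still costs only $54\,\tmix\widetilde H^2$, so the paper's remark that dropping the cap would give $\Theta(\tmix^2)$ is overly pessimistic. It also removes the paper's separate case $T<\tmix$ and gives $2\widetilde H^2+108\,\tmix\widetilde H^2\log_2 T\le\Lam\widetilde H^2$ directly.
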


The telescoping identity in part (a) is the key estimator property. Since
$J_t$ is independent of the chain and the
levels are nested prefix averages of one burst, conditioning on
$\F_{t-1}$ (which is \emph{pre}-$J_t$) telescopes the level corrections:
\begin{equation*}
\paperfitdisplay{\begin{aligned}
\E\big[\mlmc{t}[\varphi]\,\big|\,\F_{t-1}\big]
={}& \E\big[\hat\mu^{0}\,\big|\,\F_{t-1}\big]
   +\sum_{j=1}^{\jmax}2^{-j}\cdot2^{j}
    \Big(\E\big[\hat\mu^{j}\,\big|\,\F_{t-1}\big]
        -\E\big[\hat\mu^{j-1}\,\big|\,\F_{t-1}\big]\Big)\\
={}& \E\big[\hat\mu^{\jmax}\,\big|\,\F_{t-1}\big].
\end{aligned}}
\end{equation*}
The conditional mean is thus the average over $2^{\jmax}\ge T/2$
\emph{consecutive} samples, and the vector-valued total-variation bound
$\norm{\int\varphi\,d(\mu-\nu)}\le2\,\dtv(\mu,\nu)\,\widetilde H$ (no
dimension factor) gives, uniformly in the start state,
\begin{equation*}
\paperfitdisplay{\begin{aligned}
\Big\lVert\E\big[\mlmc{t}[\varphi]\,\big|\,\F_{t-1}\big]-\bar\varphi\Big\rVert
\;\le\;& \frac{1}{2^{\jmax}}\sum_{i=1}^{2^{\jmax}}
   2\widetilde H\,\dmix(i)
\;\le\; \frac{4\widetilde H\,\tmix}{2^{\jmax}}
\;\le\; \frac{8\widetilde H\,\tmix}{T},
\end{aligned}}
\end{equation*}
which is part (b). Bias control is thereby decoupled from the iterate
trajectory: the classical small-step argument (bounding
$\norm{x_t-x_{t-\tau}}$) is never used. For the second moment (c), the
per-level variance must be capped at the trivial bound for levels
$2^j\lesssim\tmix$. Below the mixing scale the correlation bound is
vacuous. This cap is exactly what limits the inflation to
$\Lam=O(\tmix\log T)$ instead of $O(\tmix^2)$.

We apply \cref{lem:mlmc} to the two functions estimated by the algorithm.
Define $n_t\coloneqq\ghat_t(x_t)-\nabla f(x_t)$ and
$e_t\coloneqq\widehat\Delta_t-(\nabla f(x_t)-\nabla f(x_{t-1}))$.

\begin{corollary}[Two instantiations of one lemma, proof in
\cref{app:reduction}]\label{cor:inst}
Conditionally on $\F_{t-1}$,
\begin{center}
\ifdefined\JOTAmode
\resizebox{\linewidth}{!}{%
\fi
\begin{tabular}{@{}lccc@{}}
\toprule
error & $\widetilde H\le$ & conditional bias $\le$ & conditional 2nd moment $\le$\\
\midrule
$n_t$ & $\Gs$ & $8\Gs\tmix/T$ & $\Lam\Gs^2$\\
$e_t$ & $\min\{2L\norm{x_t-x_{t-1}},\,2\Gs\}$ &
  $16L\tmix\norm{x_t-x_{t-1}}/T$ &
  $4\Lam L^2\norm{x_t-x_{t-1}}^2$\\
\bottomrule
\end{tabular}
\ifdefined\JOTAmode
}%
\fi
\end{center}
\end{corollary}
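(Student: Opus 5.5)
The corollary follows from \cref{lem:mlmc} applied to two $\F_{t-1}$-measurable maps. The plan is to identify, for each error, the map $\varphi$, its stationary mean $\bar\varphi$, and its centered sup-norm $\widetilde H$. Parts (b) and (c) of the lemma then give the bias and second-moment columns directly.

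\emph{Row $n_t$.} Take $\varphi(z)\coloneqq\nabla f(x_t;z)$. This map is $\F_{t-1}$-measurable because $x_t$ is. By \cref{asm:smooth}, $\bar\varphi=\E_\pi[\nabla f(x_t;z)]=\nabla f(x_t)$, and \cref{asm:noise} gives $\widetilde H\le\Gs$. Since $\ghat_t(x_t)=\mlmc{t}[\varphi]$, we have $n_t=\mlmc{t}[\varphi]-\bar\varphi$. Lemma~\ref{lem:mlmc}(b) bounds the conditional bias by $8\Gs\tmix/T$, and (c) bounds the conditional second moment by $\Lam\Gs^2$.

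\emph{Row $e_t$.} Take $\psi(z)\coloneqq\nabla f(x_t;z)-\nabla f(x_{t-1};z)$, which is $\F_{t-1}$-measurable. By the linearity identity stated after \cref{eq:mlmc}, $\widehat\Delta_t=\mlmc{t}[\psi]$, and $\bar\psi=\nabla f(x_t)-\nabla f(x_{t-1})$, so $e_t=\mlmc{t}[\psi]-\bar\psi$. The only real work is bounding $\widetilde H$ for $\psi$, and there are two estimates.
\begin{itemize}[leftmargin=1.5em,itemsep=1pt,topsep=2pt]
\item Centering by the mean gradients gives
\[
\psi(z)-\bar\psi=\big(\nabla f(x_t;z)-\nabla f(x_t)\big)-\big(\nabla f(x_{t-1};z)-\nabla f(x_{t-1})\big),
\]
which has norm at most $2\Gs$ by \cref{asm:noise}.
\item Alternatively, bound $\|\psi(z)\|\le L\|x_t-x_{t-1}\|$ by \cref{asm:smooth}, and $\|\bar\psi\|\le L\|x_t-x_{t-1}\|$ by $L$-smoothness of $f$. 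Hence $\|\psi(z)-\bar\psi\|\le 2L\|x_t-x_{t-1}\|$.
\end{itemize}
Taking the minimum of the two gives the tabulated $\widetilde H$. Feeding $\widetilde H\le 2L\norm{x_t-x_{t-1}}$ into parts (b) and (c) yields the bias bound $16L\tmix\norm{x_t-x_{t-1}}/T$ and the second-moment bound $4\Lam L^2\norm{x_t-x_{t-1}}^2$.

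One subtlety needs checking: \cref{lem:mlmc} must apply to maps that depend on $\F_{t-1}$-measurable iterates. The lemma is stated for exactly such maps and holds uniformly in the starting chain state. Since $J_t$ and the burst are drawn after $\F_{t-1}$, conditioning freezes $x_t$ and $x_{t-1}$, and the argument goes through. No step is a genuine obstacle; the only care needed is to center by $\bar\psi$ rather than bound $\psi$ alone, which is what produces the factor $2$.
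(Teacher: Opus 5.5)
Your proposal is correct and takes essentially the same route as the paper. It instantiates \cref{lem:mlmc}(b)--(c) with the $\F_{t-1}$-measurable maps $\nabla f(x_t;\cdot)$ and $\nabla f(x_t;\cdot)-\nabla f(x_{t-1};\cdot)$ via linearity of the coupled estimator, and bounds $\widetilde H$ for the difference map by the same two triangle-inequality estimates, $2L\norm{x_t-x_{t-1}}$ and $2\Gs$, yielding exactly the paper's constants $8\Gs\tmix/T$, $\Lam\Gs^2$, $16L\tmix\norm{x_t-x_{t-1}}/T$, and $4\Lam L^2\norm{x_t-x_{t-1}}^2$.
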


The bound on $e_t$ uses individual smoothness (\cref{asm:smooth}): the
coupled difference is an MLMC estimate of
$z\mapsto\nabla f(x_t;z)-\nabla f(x_{t-1};z)$, whose centered magnitude is
at most $2L\norm{x_t-x_{t-1}}$ \emph{per sample}. Note also that both rows
use the \emph{centered} bound: every mixing-time factor multiplies a power
of $\Gs$ (or of the step), which yields the noise-sensitive coefficients
below.

\subsection{Pathwise Safety by Construction}\label{sec:res-pathwise}

Next, we turn to the pathwise bounds. The downstream analysis of
\citep{yuan2026alfcg} is not an expectation-only argument: the
stepsize-ratio bound and the logarithmic truncations apply the
displacement bound $\delta_t\le\dbar$ along each sample path. The
displacement obeys the pathwise inequality (both stepsize branches,
\cref{app:pathwise})
\begin{equation*}
\begin{aligned}
\delta_t
\;\le\;& G+\norm{g_t}
\;\le\; 2G+\norm{s_t},
\end{aligned}
\end{equation*}
so a pathwise bound on $\delta_t$ is, in substance, a pathwise bound on
$\norm{s_t}$. For the raw MLMC estimator, no such bound exists at any
useful scale (\cref{prop:esssup}). The clipping step supplies exactly
this bound by construction, and does so without touching any conditional
moment the analysis uses.

\begin{lemma}[Clipping enforces the pathwise bounds, proof in
\cref{app:pathwise}]\label{lem:clipsafe}
For every $t\ge0$, almost surely:
\begin{enumerate}[label=(\alph*),leftmargin=2.2em,itemsep=1pt,topsep=2pt]
\item $\norm{g_t}\le\Ghat$ and
      $\norm{s_t}\le G+\Ghat\le2\Ghat \eqqcolon \sbar$.
\item $\delta_t\le G+\Ghat\le2\Ghat$.
\item $\norm{s_t}\le\norm{\spre}$ pointwise, where
      $\spre\coloneqq\gpre-\nabla f(x_t)$.
\end{enumerate}
\end{lemma}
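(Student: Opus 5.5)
We treat the three parts in the order (a), (c), (b). All three follow from elementary properties of Euclidean projection onto the ball $B\coloneqq\{g:\norm{g}\le\Ghat\}$, together with the first-order optimality of the LMO output and the short-step rule.

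\emph{Part (a).} Since $g_t=\clip(\gpre)\in B$, we have $\norm{g_t}\le\Ghat$ by construction. \Cref{asm:bounded} gives $\norm{\nabla f(x_t)}\le G$, so the triangle inequality yields
\[
\norm{s_t}\le\norm{g_t}+\norm{\nabla f(x_t)}\le\Ghat+G\le2\Ghat .
\]

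\emph{Part (c).} The key observation is that $\nabla f(x_t)\in B$, because $\norm{\nabla f(x_t)}\le G\le\Ghat$. Euclidean projection onto a closed convex set is nonexpansive, and it fixes every point of the set. Applying this to the pair $(\gpre,\nabla f(x_t))$ gives
\[
\norm{\clip(\gpre)-\nabla f(x_t)}=\norm{\clip(\gpre)-\clip(\nabla f(x_t))}\le\norm{\gpre-\nabla f(x_t)},
\]
which is exactly $\norm{s_t}\le\norm{\spre}$.

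\emph{Part (b).} If $v_t=x_t$, then $\etabar_t=0$ and $\delta_t=0$, so there is nothing to prove. Otherwise write $d\coloneqq\norm{v_t-x_t}>0$ and $N\coloneqq h(x_t)-h(v_t)-\inner{g_t}{v_t-x_t}$.

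First, $N\ge0$ because $v_t$ minimizes $\inner{v}{g_t}+h(v)$ over $\X$ and $x_t\in\X$. Second, $N\le (G+\norm{g_t})\,d$, using that $h$ is $G$-Lipschitz on $\X$ (\cref{asm:problem}) and Cauchy--Schwarz. Since $L_t>0$ ($\rho>0$ and $\alpha_t>0$), the step is well defined.

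We now bound $\delta_t=L_t\etabar_t d$ by splitting on which term attains the minimum in the step rule.
\begin{itemize}
\item If the quotient branch is active, then $\delta_t=L_t d\cdot N/(L_t d^2)=N/d\le G+\norm{g_t}$.
\item If the cap $\etabar_t=1$ is active, then $1\le N/(L_t d^2)$, so $L_t d\le N/d\le G+\norm{g_t}$, and $\delta_t=L_t d$.
\end{itemize}
In both cases $\delta_t\le G+\norm{g_t}\le G+\Ghat\le2\Ghat$ by part (a). This also establishes the pathwise inequality $\delta_t\le G+\norm{g_t}$ quoted before the lemma.

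The only delicate step is part (b). One must handle the cap branch correctly, noting that it implies the quotient is at least $1$, and must use the Lipschitz property of $h$ on $\X$ rather than any subgradient bound. The rest is immediate.
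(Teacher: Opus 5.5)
Your proposal is correct and follows the paper's proof essentially step for step: part (a) by the definition of projection onto the ball plus the triangle inequality, part (c) by nonexpansiveness with $\nabla f(x_t)$ as a fixed point of $\clip$, and part (b) by the same three-case split (corner convention, quotient branch, cap branch) using the $G$-Lipschitz property of $h$ on $\X$. Your added remark that the numerator $h(x_t)-h(v_t)-\inner{g_t}{v_t-x_t}$ is nonnegative by LMO optimality is a small clarification the paper leaves implicit; it guarantees that $\etabar_t\in[0,1]$.
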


Part (c) uses the non-expansiveness of projection toward a ball containing
the true gradient. This property lets the error recursion below run on the
clipped sequence with no additional bias term. Part (b) holds in both
branches of the stepsize rule and under the corner convention. It is the
pathwise displacement bound that the adaptive analysis invokes. The transfer
uses two structural facts: the descent inequality of
\citep[Lemma~6]{yuan2026alfcg} is pathwise algebra valid for an
\emph{arbitrary} $g_t$, and the momentum identity enters at the error
recursion replaced by \cref{thm:reduction}. For the raw estimator, the
corresponding quantities can have essential supremum $\Theta(T\Gs)$ by
\cref{prop:esssup}.

\subsection{The Reduction Theorem}\label{sec:res-reduction}

The conditional moments and pathwise bounds yield an error recursion with
the same form as its independent-sampling counterpart. Markov dependence
enters this recursion through two explicit scalar substitutions.

\begin{theorem}[Reduction to the i.i.d.\ recursion, proof in
\cref{app:reduction}]\label{thm:reduction}
Let $T\ge T_0$. Then
$\E_{-1}\norm{s_0}^2\le\Lam\Gs^2$ almost surely, and for every
$1\le t\le T$,
\begin{equation}\label{eq:reduction}
  \E_{t-1}\norm{s_t}^2
  \;\le\;
  (1-\alpha_t)\norm{s_{t-1}}^2
  \;+\;2\alpha_t^2\,\sM^2
  \;+\;8\,\LM^2\,\norm{x_t-x_{t-1}}^2 .
\end{equation}
\end{theorem}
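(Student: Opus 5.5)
The proof works on the unclipped error and then passes to the clipped one via \cref{lem:clipsafe}(c). That lemma gives $\norm{s_t}\le\norm{\spre}$ pointwise, so it suffices to bound $\E_{t-1}\norm{\spre}^2$.

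\textbf{Initial error.} For $t=0$ we have $\alpha_0=1$ and $g_{-1}=0$. Hence $\gpre=\ghat_0(x_0)$ and $\spre=n_0$. The $n_t$ row of \cref{cor:inst} (\cref{lem:mlmc}(c) with $\widetilde H\le\Gs$) then gives $\E_{-1}\norm{s_0}^2\le\Lam\Gs^2$ immediately.

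\textbf{Algebraic decomposition.} For $t\ge1$, substitute $g_{t-1}=s_{t-1}+\nabla f(x_{t-1})$, together with
\[
\ghat_t(x_t)=\nabla f(x_t)+n_t,
\qquad
\ghat_t(x_{t-1})=\nabla f(x_{t-1})+n_t-e_t,
\]
into the update. This yields the exact identity
\[
\spre=(1-\alpha_t)s_{t-1}+\xi_t,
\qquad
\xi_t\coloneqq\alpha_t n_t+(1-\alpha_t)e_t .
\]
The quantities $s_{t-1}$, $\alpha_t$, $x_t$, $x_{t-1}$ are all $\F_{t-1}$-measurable. Expanding the square therefore gives
\[
\E_{t-1}\norm{\spre}^2=(1-\alpha_t)^2\norm{s_{t-1}}^2+2(1-\alpha_t)\inner{s_{t-1}}{b_t}+\E_{t-1}\norm{\xi_t}^2,
\qquad b_t\coloneqq\E_{t-1}\xi_t .
\]
In the i.i.d. setting $b_t=0$. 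Here the cross term is the new object, and handling it is the main obstacle.

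\textbf{Second-moment term.} By $\norm{a+b}^2\le2\norm{a}^2+2\norm{b}^2$ and the second-moment column of \cref{cor:inst},
\[
\E_{t-1}\norm{\xi_t}^2\le2\alpha_t^2\Lam\Gs^2+8\Lam L^2\norm{x_t-x_{t-1}}^2=\alpha_t^2\sM^2+4\LM^2\norm{x_t-x_{t-1}}^2 .
\]
This already uses only half of each coefficient allowed in \cref{eq:reduction}.

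\textbf{Cross term (main obstacle).} Apply Young's inequality with weight $\alpha_t$:
\[
2(1-\alpha_t)\inner{s_{t-1}}{b_t}\le\alpha_t(1-\alpha_t)\norm{s_{t-1}}^2+(1-\alpha_t)\alpha_t^{-1}\norm{b_t}^2 .
\]
Since $(1-\alpha_t)^2+\alpha_t(1-\alpha_t)=1-\alpha_t$, the contraction factor in \cref{eq:reduction} is recovered exactly. It remains to show that $\alpha_t^{-1}\norm{b_t}^2$ fits in the unused halves of the budget.

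The bias column of \cref{cor:inst} gives
\[
\norm{b_t}^2\le\frac{128\,\alpha_t^2\Gs^2\tmix^2}{T^2}+\frac{512\,L^2\tmix^2\norm{x_t-x_{t-1}}^2}{T^2}.
\]
The first term, divided by $\alpha_t$, is at most $\alpha_t\cdot128\Gs^2\tmix^2/T^2$. To compare it with $\alpha_t^2\sM^2$ we use the lower bound $\alpha_t\ge(t+1)^{-2/3}\ge(T+1)^{-2/3}$ from \cref{sec:adaptive}. The requirement then reduces to
\[
\frac{128\,\tmix^2(T+1)^{2/3}}{T^2}\le2\Lam ,
\]
which holds once $T^{4/3}\gtrsim\tmix$. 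The second term, divided by $\alpha_t$, is at most
\[
\frac{512\,L^2\tmix^2(T+1)^{2/3}}{T^2}\,\norm{x_t-x_{t-1}}^2 .
\]
This must be at most $4\LM^2\norm{x_t-x_{t-1}}^2=8\Lam L^2\norm{x_t-x_{t-1}}^2$, which again needs only $T^{4/3}\gtrsim\tmix$.

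The burn-in $T\ge T_0=\lceil(128\tmix)^{3/4}\rceil$ gives exactly $T^{4/3}\ge128\tmix$. The generous constant $306$ in $\Lam$, together with its factor $(1+\log_2T)\ge1$, absorbs the difference between $T+1$ and $T$. I would verify these constant checks carefully but expect no difficulty.

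\textbf{Conclusion.} Summing the three pieces gives the stated bound for $\E_{t-1}\norm{\spre}^2$, and \cref{lem:clipsafe}(c) transfers it to $\E_{t-1}\norm{s_t}^2$.
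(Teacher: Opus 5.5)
Your proposal is correct and follows the paper's proof essentially step for step. Both arguments use the same decomposition $\spre=(1-\alpha_t)s_{t-1}+\xi_t$, pay for the cross term with Young's inequality against the contraction, and absorb each bias component into the unused half of its variance coefficient via $\alpha_t\ge(T+1)^{-2/3}$ and $T\ge T_0$. The only difference is cosmetic: you use Young weight $\alpha_t$, which gives the contraction $1-\alpha_t$ exactly and a bias price of $(1-\alpha_t)\alpha_t^{-1}\norm{b_t}^2$, whereas the paper uses weight $\alpha_t/2$, with price $2\alpha_t^{-1}\norm{b_t}^2$; your choice only widens an already ample margin.
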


Inequality \cref{eq:reduction} has the same form as the i.i.d.\
error recursion of \citep[Lemma~26(a)]{yuan2026alfcg}, whose right-hand side
is $(1-\alpha_t)\norm{s_{t-1}}^2+2\alpha_t^2\sigma^2
+8L^2\norm{x_t-x_{t-1}}^2$. The required substitutions are
\begin{equation}\label{eq:substitution}
  \sigma^2\;\longmapsto\;\sM^2=2\Lam\Gs^2,
  \qquad
  L^2\;\longmapsto\;\LM^2=2\Lam L^2
  \quad\text{(inside the recursion only)}.
\end{equation}
To show how the conditional bias is absorbed, decompose the momentum error as
$\spre=(1-\alpha_t)s_{t-1}+(1-\alpha_t)e_t+\alpha_t n_t$, and clipping
only shrinks it: $\norm{s_t}\le\norm{\spre}$ pointwise
(\cref{lem:clipsafe}(c)). Writing
$\xi_t\coloneqq(1-\alpha_t)e_t+\alpha_t n_t$ and
$b_t\coloneqq\E_{t-1}[\xi_t]$,
\begin{equation*}
\begin{aligned}
\E_{t-1}\norm{s_t}^2
\;\le\;& \norm{(1-\alpha_t)s_{t-1}}^2
        +2(1-\alpha_t)\inner{s_{t-1}}{b_t}
        +\E_{t-1}\norm{\xi_t}^2\\
\cle{1}\;& (1-\alpha_t)\norm{s_{t-1}}^2
        +\tfrac{2}{\alpha_t}\norm{b_t}^2
        +\E_{t-1}\norm{\xi_t}^2 ,
\end{aligned}
\end{equation*}
where \stepmark{1} is Young's inequality with weight $\alpha_t/2$ on the
cross term, using
$(1-\alpha_t)\big(1-\tfrac{\alpha_t}{2}\big)\le1-\alpha_t$.
No independence between the burst and $s_{t-1}$ is invoked: the
cross-time correlation is paid for by the $2/\alpha_t$ factor and
absorbed by the contraction $(1-\alpha_t)$. By \cref{cor:inst} and the
stepsize lower bound $\alpha_t\ge(t+1)^{-2/3}$, each component of the
bias cost $\tfrac2{\alpha_t}\norm{b_t}^2$ is at most
$128\,\tmix\,T^{-4/3}\le1$ times its variance sibling once $T\ge T_0$.
doubling the variance constants absorbs it, which is the factor $2$ in
$\sM^2$ and $\LM^2$. This contraction-based absorption is not available
unchanged in an anchored SPIDER recursion, whose within-epoch error is
additive rather than damped by $(1-\alpha_t)$. MaC-SPIDER controls that
different structure through Markov-aware batching
\citep{NEURIPS2025_1cdbce34}. Here the momentum variant is the compatible
base for the adaptive conditional-gradient chain.

Because \cref{eq:reduction} is conditional, has the same
$\F_{t-1}$-measurable coefficients as its i.i.d.\ counterpart, and is
accompanied by the pathwise bounds of \cref{lem:clipsafe}, the downstream
analysis of \citep[Appendix~H]{yuan2026alfcg} can be transferred after the
substitutions \cref{eq:substitution} and $\sbar\mapsto2\Ghat$ are verified.
\cref{app:pathwise,app:rates} carries out this transfer term by term.
\cref{tab:watershed} records which constants acquire a factor of $\Lam$.

\begin{table}[ht]
\centering
\caption{Which constants inflate under the substitution
\cref{eq:substitution}. ``Source of $L$'' distinguishes the smoothness
constant of the descent inequality (never inflated) from the recursion
coefficient (inflated).}
\label{tab:watershed}
\vspace{0.4em}
\begin{tabular}{@{}llll@{}}
\toprule
constant & definition & source of its $L$ or $\sigma$ & inflates?\\
\midrule
$\dbar$ & $8(LD+C+\sbar+\rho D)$ & descent inequality & no ($\sbar=2\Ghat$: no)\\
$\kappa$ & $(2+2\dbar^2/\rho)^{1/2}$ & via $\dbar$ & no\\
$\Bdot$ & $18(1+\beta+\dbar^2)$ & via $\dbar$ & no (but carries $\beta$)\\
$\Bddot$ & $40L^2\kappa^2\rho^{-1}\ln(1+\dbar^2T)$ &
  \textbf{recursion coefficient} & \textbf{yes: $L^2\mapsto\LM^2$}\\
$Y_1,\;Z_1$ & $\propto\rho^2$, $\propto\rho^6$ & neither & no (only via $\rho$)\\
$\sigma^2$ (in $\Omega$) & oracle variance & \textbf{recursion} &
  \textbf{yes: $\mapsto\sM^2$}\\
\bottomrule
\end{tabular}
\end{table}

\subsection{Complexity of the Tuned Variant}\label{sec:res-tuned}

\begin{theorem}[Tuned complexity, proof in \cref{app:rates}]
\label{thm:tuned}
Fix a problem-class envelope $\overline G_\sigma>0$ and suppose
$0<\Gs\le\overline G_\sigma$.
Run \algname{} with
$\rho=\rho^\star\coloneqq\sqrt{\Lam}$ and
$\beta=\beta^\star\coloneqq\sM^2=2\Lam\Gs^2$, and suppose $T\ge T_0$.
Then, with $\Lcal\coloneqq\max\{1,\ln(1+\dbarM^2T)\}=O(\log(\Lam T))$,
\begin{equation}\label{eq:tunedbound}
  \E\Big[\tfrac1{T+1}\textstyle\sum_{t=0}^{T}\gap(x_t)\Big]
  \;\le\;
  \Rcal\,(T+1)^{-1/3}+\Scal\,(T+1)^{-1/2},
\end{equation}
where
\begin{equation*}
  \Rcal\;\le\;C_{\Rcal}\,\Lcal
  \big(\Gs^{1/3}\Lam^{2/3}+\Gs^{2/3}\Lam^{5/6}\big),
  \qquad
  \Scal\;\le\;C_{\Scal}\,\Lcal^{3/2}\,\Lam^{5/2},
\end{equation*}
with $C_{\Rcal},C_{\Scal}$ explicit constants depending only on
$(L,D,G,\Ghat,\overline G_\sigma)$, given in \cref{app:rates}. In
particular, these constants are uniform over
$0<\Gs\le\overline G_\sigma$.
Consequently, for any $0<\varepsilon\le1$, the expected number of samples
needed to reach $\E[\gap(x_{\hat t})]\le\varepsilon$ is at most
\begin{equation}\label{eq:tunedcomplexity}
  \Ot\big(\tmix^{2}\,\Gs\,\varepsilon^{-3}
        \;+\;\tmix^{5/2}\,\Gs^{2}\,\varepsilon^{-3}
        \;+\;\tmix^{5}\,\varepsilon^{-2}\big).
\end{equation}
\end{theorem}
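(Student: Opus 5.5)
The proof transfers the ALFCG-MVR2 complexity bound of \citep[Appendix~H]{yuan2026alfcg} and then optimizes the constants. The main work is bookkeeping: tracking every appearance of $\Lam$ through the substitutions \cref{eq:substitution}, together with the pathwise constant $\sbar\mapsto 2\Ghat$. We first establish the generic bound
\[
\E\Big[\tfrac1{T+1}\sum_{t}\gap(x_t)\Big]\le \Rcal(T+1)^{-1/3}+\Scal(T+1)^{-1/2},
\]
with $\Rcal,\Scal$ written as explicit functions of $(\rho,\beta,\dbar,\kappa,\Bdot,\Bddot,\sigma^2)$.

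\textbf{Step 1: the generic bound.} It follows from three ingredients:
\begin{itemize}
\item the pathwise descent inequality \citep[Lemma~6]{yuan2026alfcg}, valid for arbitrary $g_t$;
\item the conditional recursion of \cref{thm:reduction}, in place of \citep[Lemma~26(a)]{yuan2026alfcg};
\item the pathwise bounds of \cref{lem:clipsafe}, in place of the i.i.d.\ pathwise facts.
\end{itemize}
The recursion is summed with the self-normalized weights $\alpha_t$. The cumulative weighted-error lemma only needs the recursion to hold conditionally with $\F_{t-1}$-measurable coefficients. It also needs the logarithmic truncations, and these use $\delta_t\le\dbar$ pathwise. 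Following \cref{tab:watershed}, $\dbar,\kappa,\Bdot$ stay free of $\Lam$, except through $\rho$ and $\beta$. In contrast, $\Bddot$ picks up a factor $\Lam$ via $L^2\mapsto\LM^2$, and $\sigma^2$ becomes $2\Lam\Gs^2$.

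\textbf{Step 2: insert the tuned parameters.} We set $\rho=\sqrt\Lam$ and $\beta=2\Lam\Gs^2$. With this choice:
\begin{itemize}
\item $\dbar=\dbarM=O(\sqrt\Lam)$, since it contains $\rho D$;
\item $\kappa^2=O(1+\dbar^2/\rho)=O(\sqrt\Lam)$;
\item $\Bddot=O(\Lam L^2\kappa^2\rho^{-1}\Lcal)=O(\Lam\,\Lcal)$;
\item $\Bdot=O(\Lam(1+\Gs^2))$.
\end{itemize}
In the base theorem the leading coefficient $\Rcal$ combines $\sigma^{2/3}$ (respectively $\sigma^{1/3}$) with powers of $\rho$ and $\Bddot$. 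Substituting the scalings above should give the two branches $\Gs^{1/3}\Lam^{2/3}$ and $\Gs^{2/3}\Lam^{5/6}$. Every additive term not proportional to a power of $\Gs$ is pushed into $\Scal$, which carries the $\rho^6$-type terms ($Z_1$) and the burn-in terms and so scales as $\Lam^{5/2}\Lcal^{3/2}$. Uniformity over $0<\Gs\le\overline G_\sigma$ comes from bounding $1+\Gs^2\le1+\overline G_\sigma^2$ wherever $\Gs$ sits inside $\Bdot$. The condition $T\ge T_0$ is exactly what \cref{thm:reduction} requires.

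\textbf{Main obstacle.} The hard part is making sure each $\Gs$-free term really lands in the $(T+1)^{-1/2}$ part. Terms involving $\beta$, such as $\alpha_t$ normalizations containing $\beta=2\Lam\Gs^2$, must produce $\Gs$-proportional contributions to $\Rcal$ rather than leaving a $\Lam$-power without $\Gs$ at rate $T^{-1/3}$. If that happened, the claim that both stochastic branches vanish with $\Gs$ would fail. My approach is to redo the base theorem's final Hölder/Young balancing explicitly with $\beta$ kept symbolic. Where $\beta$ appears in a denominator (through $\alpha_t\ge(\beta/(\beta+\dots))^{2/3}$-type bounds), I would pair it with $\sigma^2=\beta$, so that the ratio $\sigma^2/\beta=1$ cancels $\Lam$ factors.

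\textbf{Step 3: complexity.} Require each term of \cref{eq:tunedbound} to be at most $\varepsilon/2$:
\begin{itemize}
\item $\Rcal(T+1)^{-1/3}\le\varepsilon/2$ needs $T\gtrsim\Rcal^3\varepsilon^{-3}$, i.e.\ $(\Gs\Lam^2+\Gs^2\Lam^{5/2})\varepsilon^{-3}$ up to logs;
\item $\Scal(T+1)^{-1/2}\le\varepsilon/2$ needs $T\gtrsim\Lam^5\varepsilon^{-2}$.
\end{itemize}
Since $\Lam=\Tht(\tmix)$ and $\Lcal$ is logarithmic, the dependence of $\Lam$ and $\Lcal$ on $T$ is handled by the usual self-consistent choice of $T$ polylogarithmically larger. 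The burn-in $T\ge T_0=O(\tmix^{3/4})$ is dominated. Because $\hat t$ is uniform, $\E[\gap(x_{\hat t})]$ equals the averaged gap. Finally, multiplying the iteration count by $\E[N_t]\le1+\log_2T$ from \cref{lem:mlmc}(d) turns it into an expected sample count and gives \cref{eq:tunedcomplexity}.
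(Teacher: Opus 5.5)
Your skeleton matches the paper's: transfer via \cref{thm:reduction} and \cref{lem:clipsafe}, constant propagation at $\rho^\star=\sqrt\Lam$ and $\beta^\star=\sM^2$, then solving for $T$ and multiplying by $\E[N_t]$. Your scalings $\dbar=O(\sqrt\Lam)$, $\kappa^2=O(\sqrt\Lam)$, $\Bdot=O(\Lam)$, $\Bddot=O(\Lam\Lcal)$ are exactly Steps 1--4 of \cref{lem:D}.

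The gap is in the step that actually produces the two branches. You leave it at ``should give'', and the mechanism you propose for your ``main obstacle'' would fail. The $\Gs$-free terms cannot be pushed into $\Scal$. In the transferred master bound \cref{eq:master}, the $(T+1)^{-1/3}$ coefficient is a product,
\[
\Rcal=\frac{2Y_0}{\rho}\cdot\frac{M}{\vartheta},\qquad
M=\max\big(\beta^{1/3},\vartheta^2,\sigma^2\beta^{-2/3}\big),\qquad
Y_0=\max\big(2Y_1,\,96(\Bdot+\Bddot)\big).
\]
So $Y_1\propto\rho^2$, the $18(1+\dbar^2)$ part of $\Bdot$, and all of $\Bddot$ sit in $\Rcal$ at rate $(T+1)^{-1/3}$. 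Relocating any of them to the $(T+1)^{-1/2}$ term would make $\Scal$ grow like $(T+1)^{1/6}$.

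They also do not need to move. At $\beta=\sigma^2=\sM^2$ you have $\sigma^2\beta^{-2/3}=\beta^{1/3}$. You also have $\vartheta^2\le\beta^{1/3}$, because $\vartheta=\beta^{1/6}/(1+\beta^{1/6})\le\beta^{1/6}$. Hence $M/\vartheta=\beta^{1/3}/\vartheta=\beta^{1/6}+\beta^{1/3}$ exactly. This single factor carries all the $\Gs$-dependence of $\Rcal$ and vanishes with $\Gs$. No re-derivation of the base H\"older/Young balancing with symbolic $\beta$ is needed.

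What remains is the bound $Y_0/\rho^\star\le C_Y\Lcal\sqrt\Lam$. This uses all three pieces, not only $\Bddot$ as your description of $\Rcal$ suggests:
\[
2Y_1/\rho^\star=(24D^2+288\bar F^2)\sqrt\Lam,\qquad
96\Bdot/\rho^\star=O(\sqrt\Lam),\qquad
96\Bddot/\rho^\star=O(\Lcal\sqrt\Lam).
\]
Multiplying by $\beta^{\star1/6}+\beta^{\star1/3}$ with $\beta^\star=2\Lam\Gs^2$ gives $\Gs^{1/3}\Lam^{2/3}+\Gs^{2/3}\Lam^{5/6}$. Uniformity follows, as you say, from $\beta^\star\le2\Lam\overline G_\sigma^2$ inside $\Bdot$. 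The same bound is needed inside $Z_2$, where $\sigma^2+\sigma^2/\beta=\beta^\star+1$.

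There is a smaller omission in your Step~1. With random predictable weights $\alpha_t$, summing the conditional recursion does not follow from $\F_{t-1}$-measurability alone. The paper uses a conditional Abel summation that also needs $0\le\alpha_{t+1}^{-1}-\alpha_t^{-1}\le 2/3$ (\cref{lem:conditionalabel,lem:alphaincrement}). It also restates the base analysis's case split at the level of expectations rather than pathwise.
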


We sketch where the exponent comes from. Treating problem constants as
$O(1)$ and $\beta=\Theta(\sM^2)$, the constant map in
\cref{tab:watershed} gives
\begin{equation*}
  \frac{Y_0}{\rho}
  \;=\;\Theta\Big(
   \underbrace{\tfrac{\Lam}{\rho}}_{\Bdot}
   +\underbrace{\rho}_{Y_1}
   +\underbrace{\tfrac{\Lam}{\rho^2}+\tfrac{\Lam}{\rho}}_{\Bddot}
  \Big)
  \qquad\Longrightarrow\qquad
  \rho^\star=\Theta(\sqrt{\Lam}),
  \quad
  \frac{Y_0}{\rho^\star}=\Theta(\sqrt{\Lam}).
\end{equation*}
The remaining factor is exact:
$\beta^{1/3}/\vartheta=\beta^{1/6}+\beta^{1/3}$. Hence the two stochastic
branches are $\Tht(\Gs^{1/3}\Lam^{2/3})$ and
$\Tht(\Gs^{2/3}\Lam^{5/6})$.  When $\beta^\star\ge1$, the latter is the
larger branch and yields the exponent $5/2=3\times\tfrac56$. When
$\beta^\star<1$, the former gives the continuous low-noise companion.
The free parameter $\rho$ is what removes one factor of $\sqrt\Lam$ from
$\Bddot$.

\begin{remark}\label{rem:loworder}
We highlight some features of \cref{thm:tuned}.
\stepmark{1} \textbf{The lower-order term.}
The $\varepsilon^{-2}$ term of \cref{eq:tunedcomplexity} is not free
of the mixing time: at $\rho^\star=\sqrt{\Lam}$ the burn-in constant
contains a $\rho^6$ contribution. When $D>0$, the upper bound
$\Scal=\Ot(\Lam^{5/2})$ is matched by a lower bound of the same order within
the transferred constant propagation (\cref{app:rates}). For $D=0$, only the
upper bound is asserted.
\stepmark{2} \textbf{Two noise regimes.} If
$\beta^\star=2\Lam\Gs^2\ge1$, the
$\tmix^{5/2}\Gs^2\varepsilon^{-3}$ branch dominates its
$\tmix^2\Gs\varepsilon^{-3}$ companion.  If $\beta^\star<1$, the latter
must be retained. Dropping it would make the displayed bound
discontinuous at small positive noise.
\stepmark{3} \textbf{Scope of the tuning certificate.} For fixed $D>0$
and $\Gs>0$, \cref{thm:lower} shows that no re-tuning of $(\rho,\beta)$ within
the transferred constant propagation can improve the exponent $\Lam^{5/6}$
of the leading
rate coefficient.  It does not assert uniform optimality in $\Gs$ or an
exact minimizer of the full coefficient.
\end{remark}

\begin{corollary}[Noise-vanishing and exactly noiseless regimes]\label{cor:zeronoise}
With $\overline G_\sigma$ fixed, both stochastic $\varepsilon^{-3}$ terms of
\cref{eq:tunedcomplexity} vanish as $\Gs\to0$. In the exactly noiseless
case $\Gs=0$, choose $\rho=\Theta(1)$ and
$\beta=(T+1)^{-1}$. Then $s_t\equiv0$ and the exact-error specialization
of the transferred constant propagation gives
\begin{equation*}
  \E\Big[\tfrac1{T+1}\textstyle\sum_{t=0}^{T}\gap(x_t)\Big]
  =\Ot((T+1)^{-1/2}),
\end{equation*}
so the expected sample complexity is
$\Ot(\varepsilon^{-2})$ with mixing-time-free constants.
\end{corollary}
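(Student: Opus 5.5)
The proof has two parts: the vanishing of the $\varepsilon^{-3}$ terms as $\Gs\to0$, and the exactly noiseless rate.

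\textbf{Vanishing of the stochastic terms.} This part follows from \cref{thm:tuned}. Its constants $C_{\Rcal},C_{\Scal}$ are uniform over $0<\Gs\le\overline G_\sigma$, so the two $\varepsilon^{-3}$ coefficients $\tmix^2\Gs$ and $\tmix^{5/2}\Gs^2$ in \cref{eq:tunedcomplexity} tend to zero linearly and quadratically in $\Gs$. The same holds for the corresponding branches $\Gs^{1/3}\Lam^{2/3}$ and $\Gs^{2/3}\Lam^{5/6}$ of $\Rcal$. The only point to check is that $\Lcal$ stays bounded as $\Gs\to0$. This holds because $\dbarM$ does not depend on $\beta$ or $\Gs$ (\cref{tab:watershed}), and $\rho^\star=\sqrt\Lam$ does not depend on $\Gs$ either.

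\textbf{Noiseless case: the error vanishes.} When $\Gs=0$, \cref{asm:noise} forces $\nabla f(x;z)=\nabla f(x)$ for every $z$. Every prefix average $\hat\mu^j$ of $\nabla f(y;\cdot)$ then equals $\nabla f(y)$ exactly, and all level corrections in \cref{eq:mlmc} vanish. Hence $\ghat_t(y)=\nabla f(y)$ pathwise. I will prove $s_t\equiv0$ by induction.
\begin{itemize}
\item At $t=0$, $\gpre=\ghat_0(x_0)=\nabla f(x_0)$.
\item If $g_{t-1}=\nabla f(x_{t-1})$, then
\[
\gpre=(1-\alpha_t)\bigl(\nabla f(x_{t-1})-\nabla f(x_{t-1})\bigr)+\nabla f(x_t)=\nabla f(x_t).
\]
\end{itemize}
Since $\norm{\nabla f(x_t)}\le G\le\Ghat$, clipping leaves $\gpre$ unchanged, so $g_t=\nabla f(x_t)$ and $s_t=0$. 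The chain therefore plays no role at all: the algorithm is deterministic given $x_0$.

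\textbf{Noiseless case: the rate.} I then specialize the transferred descent analysis (\citep[Appendix~H]{yuan2026alfcg}, as transferred in \cref{app:rates}) to the exact-error case.
\begin{itemize}
\item The error-accumulation terms multiplied by $\norm{s_t}^2$ vanish. These are the terms carrying $\sigma^2$ and the $\Bddot$ contribution, which is the only place the recursion coefficient $\LM^2$ enters.
\item The $\Lam$-dependence disappears, because $\Lam$ enters only through \cref{thm:reduction} and the choice $\rho^\star$. Here we instead fix $\rho=\Theta(1)$.
\item The remaining bound consists of the descent-inequality terms, governed by $\dbar$, $\kappa$, $Y_1$, $Z_1$ (all mixing-free at $\rho=\Theta(1)$), plus the $\beta$-dependent terms from $\alpha_t$ and $\Bdot$.
\end{itemize}
The choice $\beta=(T+1)^{-1}$ makes $\sum_{i<t}u_i\le 1+\sum_i L_i^2\norm{x_{i+1}-x_i}^2$ contribute only $O(1)$ from $\beta$. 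The $\beta^{1/3}(T+1)^{-1/3}$-type branch then becomes $(T+1)^{-2/3}$, which is dominated by the $\Scal(T+1)^{-1/2}$ term. Here $\Scal$ is now a polylogarithmic constant free of $\tmix$.

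\textbf{Sample complexity.} An averaged gap of $\Ot(T^{-1/2})$ gives iteration complexity $\Ot(\varepsilon^{-2})$. Multiplying by $\E[N_t]\le1+\log_2T$ from \cref{lem:mlmc}(d) gives the expected sample complexity $\Ot(\varepsilon^{-2})$.

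\textbf{Main obstacle.} The hard step is auditing the transferred constant propagation to confirm two facts: that no $\Lam$ or $\tmix$ factor survives when $s_t\equiv0$, and that the $T_0$ burn-in requirement inherited from \cref{thm:reduction} is unnecessary. The reduction theorem is not needed once $s_t=0$ exactly, so I expect $T_0$ to drop out. The other delicate point is how $\beta$ enters $\alpha_t$ and $\Bdot$ through the logarithmic truncations. I would first check this by bounding $\ln(1+\dbar^2T)$ directly and tracking $\beta$ as the only $T$-dependent parameter.
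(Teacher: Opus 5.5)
Your proposal is correct and follows the paper's own argument: the induction giving $\ghat_t(y)=\nabla f(y)$, inactive clipping, and $s_t\equiv0$ is exactly \cref{lem:exacttransfer}, which also drops $T\ge T_0$ because that condition is needed only to absorb the Markovian bias, and the rate then comes from specializing \cref{eq:master} with $\sigma^2=0$, $\Bddot=0$, $\rho=\Theta(1)$, and $\beta=(T+1)^{-1}$, so that $Y_0,Z_0=O(1)$ with no $\tmix$-dependence. One slip: the $(T+1)^{-1/3}$ coefficient is $2Y_0\max(\beta^{1/3},\vartheta^2)/(\rho\vartheta)$ with $\vartheta=\beta^{1/6}/(1+\beta^{1/6})$, so its $\beta$-factor is $\beta^{1/3}/\vartheta=\beta^{1/6}+\beta^{1/3}=O((T+1)^{-1/6})$ and that branch is $O((T+1)^{-1/2})$, of the same order as the $\Scal$ term rather than $(T+1)^{-2/3}$; this is why $\beta$ must be as small as $(T+1)^{-1}$ (for instance $\beta=(T+1)^{-1/2}$ would give only $(T+1)^{-5/12}$), although your final rate is unaffected.
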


The positive-noise and exactly noiseless conclusions use different
parameter branches. The latter is not obtained by substituting
$\Gs=0$ into $\beta^\star$.  The centered noise level $\Gs$ and the
noise-independent clipping radius make this separation explicit.

\subsection{Complexity of the Mixing-Time-Oblivious Variant}
\label{sec:res-oblivious}

The tuned parameters $(\rho^\star,\beta^\star)$ require (upper bounds
on) $\tmix$ and $\Gs$. We next show that neither is necessary for the
$\varepsilon^{-3}$ rate itself.

\begin{theorem}[Oblivious complexity, proof in \cref{app:oblivious}]
\label{thm:oblivious}
Run \algname{} with any fixed $\rho,\beta=\Theta(1)$ chosen independently of
$\tmix$ and $\Gs$ (the clipping radius $\Ghat$ is likewise $\tmix$-free), and let
$T\ge T_0$. Then, for any $0<\varepsilon\le1$, the expected number of samples
needed to reach $\E[\gap(x_{\hat t})]\le\varepsilon$ is at most
\begin{equation*}
  \Ot\big(\tmix^{6}\,\varepsilon^{-3}+\tmix^{3}\,\varepsilon^{-2}\big).
\end{equation*}
This shorthand treats the finite noise bound $\Gs$ as a fixed problem
constant. More explicitly, the proof gives
\begin{equation*}
  \Ot\Big(
    (\tmix^3+\tmix^6\Gs^6)\varepsilon^{-3}
    +(\tmix^3+\tmix^2\Gs^4)\varepsilon^{-2}
  \Big).
\end{equation*}
Thus the method is oblivious to the \emph{value} of $\Gs$, but the guarantee
is not uniform over an unbounded family of noise levels.
\end{theorem}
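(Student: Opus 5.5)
The plan is to reuse the transferred bound of \cref{thm:tuned}, i.e.\ the generic inequality
\begin{equation*}
  \E\Big[\tfrac1{T+1}\textstyle\sum_{t=0}^{T}\gap(x_t)\Big]\le\Rcal(\rho,\beta)(T+1)^{-1/3}+\Scal(\rho,\beta)(T+1)^{-1/2},
\end{equation*}
which holds for every admissible pair $(\rho,\beta)$. That inequality follows from \cref{thm:reduction} together with the pathwise bounds of \cref{lem:clipsafe}, which we take as given. The tuned choice enters only in the final specialization, so here we instead substitute $\rho,\beta=\Theta(1)$. We then track how each constant in \cref{tab:watershed} scales in $\Lam$ and $\Gs$.

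The first step is to record the constants that do not inflate. The pathwise constants are $\dbar=8(LD+C+2\Ghat+\rho D)=O(1)$, $\kappa=O(1)$ and $\Bdot=18(1+\beta+\dbar^2)=O(1)$. They are free of $\tmix$ because clipping uses the $\tmix$-free radius $\Ghat$. The burn-in constants $Y_1,Z_1$ are polynomial in $\rho$ and hence also $O(1)$. The inflated constants are $\Bddot=\Theta(\LM^2\Lcal)=\Theta(\Lam\Lcal)$ and the effective noise $\sM^2=2\Lam\Gs^2$, which enters the $\Omega$-term of the base analysis. The base method's leading coefficient has the form $(Y_0/\rho)\cdot\beta^{1/3}/\vartheta$ plus a noise-mismatch contribution. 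Because $\beta$ is no longer matched to $\sM^2$, the self-normalized weight $\alpha_t$ underestimates the variance. In the base analysis the mismatch appears as a factor of order $(\sigma^2/\beta)$ raised to some power. With $\sigma^2\mapsto\sM^2$ this gives $\Rcal=\Ot(\Lam+\Lam^2\Gs^2)$ and $\Scal=\Ot(\Lam^{3/2}+\Lam\Gs^2)$. These are the targets to verify term by term against \cref{app:rates}.

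The second step converts the rate into a sample count. Reaching gap $\varepsilon$ requires $T\gtrsim(\Rcal/\varepsilon)^3+(\Scal/\varepsilon)^2$. By \cref{lem:mlmc}(d) the expected number of samples is at most $(T+1)(1+\log_2T)$. Substituting the estimates gives
\begin{equation*}
  \Ot\big((\Lam^3+\Lam^6\Gs^6)\varepsilon^{-3}+(\Lam^3+\Lam^2\Gs^4)\varepsilon^{-2}\big).
\end{equation*}
With $\Lam=\Ot(\tmix)$ this is the explicit form in the statement, and treating $\Gs$ as a constant collapses it to $\Ot(\tmix^6\varepsilon^{-3}+\tmix^3\varepsilon^{-2})$. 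Two further checks are needed. The condition $T\ge T_0$ is dominated by the sample count. The $\log T$ factor inside $\Lam$ and $\Lcal$ is absorbed by solving the implicit inequality in $T$ up to polylogarithmic factors.

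The main obstacle is the precise exponent of the noise-mismatch term when $\beta$ is untied from $\sigma^2$. Specifically, it must be shown that the transferred analysis produces $\Lam^2\Gs^2$ in $\Rcal$, yielding $\tmix^6\Gs^6$ after cubing, and $\Lam\Gs^2$ in $\Scal$, yielding $\tmix^2\Gs^4$. I would first try to bound the cumulative-error sum $\sum_t\alpha_t^2\sM^2$ against the potential $1+\sum_iu_i$. Here $u_i\ge\beta$ gives $\alpha_t\lesssim(\beta t)^{-2/3}$ scaling, and the ratio $\sM^2/\beta=\Theta(\Lam\Gs^2)$ appears as a multiplicative factor. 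That factor would then be propagated through the $\Omega$-term in the same way as $\Bddot$.
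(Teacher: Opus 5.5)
Your skeleton is the paper's: instantiate the transferred master bound with $\rho_0,\beta_0=\Theta(1)$. Note that $\dbar,\kappa,\Bdot,Y_1,Z_1$ stay $O(1)$ because $\sbar=2\Ghat$, and that $\Lam$ enters $Y_0$ only through $\Bddot=\Tht(\Lam)$. Then convert $\Rcal^3\varepsilon^{-3}+\Scal^2\varepsilon^{-2}$ into samples via \cref{lem:mlmc}(d). The bound you should invoke is \cref{prop:transfer}, which holds for all $\rho,\beta>0$; \cref{thm:tuned} is stated only at $(\rho^\star,\beta^\star)$. Your final exponents agree with the paper's.

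There is a gap, however. The decisive step is never derived: the $\Gs$-dependent parts, $\Lam^2\Gs^2$ in $\Rcal$ and $\Lam\Gs^2$ in $\Scal$, are posited as ``targets'' and then named the main obstacle. You offer only a tentative plan to rebuild them from $\alpha_t\lesssim(\beta t)^{-2/3}$. Checking them ``against \cref{app:rates}'' would not work either, because there $\beta^\star=\sM^2$ collapses the max and the mismatch regime never appears.

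No re-derivation is needed, because the master bound already exhibits the explicit $(\sigma^2,\beta)$-dependence. Its coefficients are $\Rcal=2Y_0\max(\beta^{1/3},\vartheta^2,\sigma^2\beta^{-2/3})/(\rho\vartheta)$ and $\Scal=2\max(2Z_1,Z_2)/\rho$, with $Z_2=(48\Bddot)^{3/2}+96\Bdot\sigma^2(1+1/\beta)$. \cref{prop:transfer} shows these hold under $\sigma^2\mapsto\sM^2$.

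With $\beta_0=\Theta(1)$ you get $\vartheta=\Theta(1)$ and the max is $O(1+\sM^2)=O(1+\Lam\Gs^2)$. Multiplying by $Y_0/\rho_0=\Ot(\Lam)$ gives $\Rcal=\Ot(\Lam+\Lam^2\Gs^2)$. The structure is multiplicative, not ``plus a mismatch contribution'': one factor of $\Lam$ comes from $\LM^2$ inside $\Bddot$, the other from $\sM^2$. The mismatch enters linearly in $\sM^2$, not to ``some power'', and that linearity is exactly what makes the cube $\Lam^6\Gs^6$. Likewise $Z_1=\Theta(1)$ and $Z_2=\Ot(\Lam^{3/2}+\Lam\Gs^2)$ give $\Scal=\Ot(\Lam^{3/2}+\Lam\Gs^2)$.

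Your $\alpha_t$-scaling route would at best recover the $\Bdot\sigma^2\beta^{-2/3}T^{1/3}$ term of the cumulative-error bound. You would then still have to redo the expectation-level case split leading to $\Omega$. Reading the coefficients off the master bound avoids all of that.
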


The algorithm of \cref{thm:oblivious} uses no knowledge of the mixing time
and retains an $\varepsilon^{-3}$ stochastic term for the generalized
Frank--Wolfe gap. The comparison with the tuned result is regime dependent:
the tuned high-effective-noise branch carries
$\tmix^{5/2}\Gs^2\varepsilon^{-3}$, whereas its low-effective-noise
companion carries $\tmix^2\Gs\varepsilon^{-3}$. The oblivious bound carries
$\tmix^6\varepsilon^{-3}$ after fixed noise constants are suppressed.
Results in \citep{alacaoglu2023convergence,dorfman2022adapting} concern
different problem structures and stationarity measures, so none of these
expressions is interpreted as a direct rate dominance.

\subsection{Optimality of the Exponent within the Analysis}
\label{sec:res-lower}

We next characterize the best exponent obtainable by tuning within the
transferred analysis.

\begin{theorem}[In-analysis lower bound, proof in \cref{app:lower}]
\label{thm:lower}
Consider the rate coefficient $\Rcal(\rho,\beta)$ produced by the transferred
constant propagation from \citep[Appendix~H]{yuan2026alfcg} under the
substitutions \cref{eq:substitution}. Fix $D>0$ and $\Gs>0$, and let any
admissible pathwise bound $\dbar\ge0$ be used in the propagation. Then, for all $\rho>0$
and $\beta>0$,
\begin{equation*}
  \Rcal(\rho,\beta)\;\ge\;288\sqrt2\;D\,\sM^{5/3}
  \;=\;288\sqrt2\cdot2^{5/6}\,D\,\Gs^{5/3}\,\Lam^{5/6}.
\end{equation*}
For fixed $(D,\Gs)$, the exponent $\Lam^{5/6}$ in the tuned upper bound
is therefore optimal within the transferred constant propagation, up to
logarithms. This
does not claim uniform optimality in $\Gs$ or that
$(\rho^\star,\beta^\star)$ exactly minimizes the full coefficient.
\end{theorem}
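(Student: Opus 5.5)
The plan is to work directly with the explicit formula for $\Rcal(\rho,\beta)$ from the transferred constant propagation of \citep[Appendix~H]{yuan2026alfcg} and isolate one summand that depends on $(\rho,\beta)$ in a controlled way. By the exponent sketch after \cref{thm:tuned}, $\Rcal$ factors as $(Y_0/\rho)\cdot(\beta^{1/3}/\vartheta)\cdot(\text{terms in }\sM,D)$, plus nonnegative additions. The first step is to write $\Rcal\ge c\,D\,\Phi(\rho,\beta)$. Here $\Phi$ collects only those contributions that survive after every $\dbar$-dependent quantity ($\kappa$, $\Bdot$, $\Bddot$, $\Lcal$) is bounded below by its value at $\dbar=0$. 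This works because all of these quantities are monotone nondecreasing in $\dbar$: $\kappa\ge\sqrt2$, $\Bdot\ge18(1+\beta)$, and $\Lcal\ge1$. This monotonicity is what makes the bound valid for any admissible $\dbar\ge0$. The constant $288\sqrt2$ should then come out as $18\cdot 16\cdot\sqrt2$, from $\Bdot$ and $\kappa\ge\sqrt2$.

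Second, I would keep only two competing terms in $\Phi$. The first is a noise term in which $\sM^2$ enters, from $\sigma^2$ inside $\Omega$, divided by a power of $\beta$ and $\rho$. The second is a term growing in $\beta$ via $\Bdot\propto(1+\beta)\ge\beta$, or growing in $\rho$ via $Y_1\propto\rho^2$. Schematically this gives $\Phi\gtrsim \frac{\beta}{\rho}\,\beta^{-1/3}\cdots+\frac{\sM^2}{\beta}\cdots$. The goal is a lower bound of the form $A\beta^{a}+B\sM^{2}\beta^{-b}$, uniform in $\rho$, or first minimized in $\rho$ by AM--GM against the $\rho$ terms. Minimizing over $\beta$ via weighted AM--GM, $\inf_\beta(A\beta^a+B\beta^{-b})\ge c\,A^{b/(a+b)}B^{a/(a+b)}$, should give exactly the power $\sM^{5/3}$. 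The exponents must be checked so that $2a/(a+b)=5/3$. The $\sqrt\Lam$ saving from $\rho$ should appear consistently: $\sM^{5/3}=(2\Lam)^{5/6}\Gs^{5/3}$ gives the displayed identity. That identity is pure algebra once the first inequality is established.

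Third, I would confirm that $D$ enters linearly. The rate coefficient carries $D$ through the gap-to-descent conversion, $\gap\le\ldots\cdot D$, and I would keep $D>0$ fixed so that this factor is not cancelled. The final statement then follows by comparing the lower bound with the upper bound $\Rcal\lesssim\Lcal\,\Gs^{2/3}\Lam^{5/6}$ of \cref{thm:tuned}: both scale as $\Lam^{5/6}$, so the exponent is tight within this propagation up to logarithmic factors.

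The main obstacle is the first step: extracting the exact form of $\Rcal(\rho,\beta)$ from the cited appendix, and choosing which terms to drop while still retaining a pair that balances to $\sM^{5/3}$ uniformly in $\rho$. My first attempt would minimize jointly in $(\rho,\beta)$ along the curve $\rho^2\asymp\beta$. If $\rho$ can be sent to make one term vanish, I would instead pair the $\rho$-increasing $Y_1$ term with the $\rho$-decreasing $\Bdot/\rho$ term before optimizing $\beta$.
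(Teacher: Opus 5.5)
\textbf{There is a genuine gap.} The idea in your last sentence is the whole proof: pair the $\rho$-increasing $Y_1$ term against the $\rho$-decreasing $\Bdot/\rho$ term, then optimize $\beta$. But the route you lead with would not produce the stated bound, and the proposal never writes down the inequalities that carry the argument. Four specific problems:
\begin{enumerate}
\item \emph{The form of $\Rcal$.} It is $\Rcal(\rho,\beta)=2Y_0\max(\beta^{1/3},\vartheta^2,\sM^2\beta^{-2/3})/(\rho\vartheta)$. This reduces to $(2Y_0/\rho)\,\beta^{1/3}/\vartheta$ only at $\beta=\sM^2$. The noise enters through the branch $\sM^2\beta^{-2/3}$ of the max, and the $\vartheta$ in the denominator is discarded using $\vartheta\le1$, a step you never take.
\item \emph{The source of the constant.} Your attribution to ``$\Bdot$ and $\kappa\ge\sqrt2$'' with $\Lcal\ge1$ does not work. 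In $\Rcal$, $\kappa$ appears only through $\Bddot=40L^2\kappa^2\rho^{-1}\ln(1+\dbar^2T)$, whose logarithm is $0$ at $\dbar=0$. The factor $\Lcal=\max\{1,\cdot\}$ exists only in the tuned upper bound. So your reduction to $\dbar=0$ kills this term, and it would in any case give an $L$-dependent rather than $D$-dependent bound. Your monotone-in-$\dbar$ reduction is otherwise harmless, but the only useful thing that survives it is $\Bdot\ge18(1+\beta)$.
\item \emph{How $D$ enters.} It does not come from a gap-to-descent conversion. Its only $\dbar$-free occurrence is $2Y_1\ge24D^2\rho^2$, and the linear power of $D$ comes from a geometric mean.
\item \emph{The joint minimization.} Minimizing along a curve $\rho^2\asymp\beta$ only locates a candidate minimizer; it proves nothing at other $(\rho,\beta)$. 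Also, your schematic pair $\beta^{2/3}$ versus $\sM^2\beta^{-1}$, read literally, balances to $\sM^{4/5}$, not $\sM^{5/3}$.
\end{enumerate}

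\textbf{The missing argument.} Use the two $\dbar$-free bounds $Y_0\ge2Y_1\ge24D^2\rho^2$ and $Y_0\ge Y_2\ge96\Bdot=1728(1+\beta+\dbar^2)\ge1728\beta$. These, not monotonicity of $\kappa$ or $\Bddot$, are what make the bound uniform over admissible $\dbar$. For every $\rho$ they give
$Y_0/\rho\ge\max(24D^2\rho,\,1728\beta/\rho)\ge\sqrt{24\cdot1728}\,D\sqrt\beta=144\sqrt2\,D\sqrt\beta$,
using $\max(a,b)\ge\sqrt{ab}$. Now apply $\vartheta\le1$ and drop the $\vartheta^2$ branch of the max. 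This yields $\Rcal\ge288\sqrt2\,D\,g(\beta)$ with $g(b)=\sqrt b\max(b^{1/3},\sM^2b^{-2/3})$. The function $g$ equals $\sM^2b^{-1/6}$ (decreasing) on $b\le\sM^2$ and $b^{5/6}$ (increasing) on $b\ge\sM^2$, so its minimum is $g(\sM^2)=\sM^{5/3}$. Thus $288\sqrt2=2\cdot\sqrt{24\cdot1728}$: the $2$ is the prefactor of $\Rcal$, and the $\sqrt2$ comes from the geometric mean, not from $\kappa$. Your weighted AM--GM can replace the monotonicity step. Bound the max below by the convex combination $\tfrac16 b^{5/6}+\tfrac56\sM^2b^{-1/6}$, whose minimum over $b$ is exactly $\sM^{5/3}$. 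The correct exponent pair after the $\rho$-step is $a=5/6$, $b=1/6$, not the one in your schematic.
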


The proof uses two inequalities in the transferred constant propagation
that are independent of $\dbar$:
$Y_0\ge2Y_1\ge24D^2\rho^2$ and $Y_0\ge96\Bdot\ge1728\beta$. Therefore
\begin{equation*}
\paperfitdisplay{\begin{aligned}
\Rcal
\;\ge\;& 2\,\frac{Y_0}{\rho}\,
        \max\big(\beta^{1/3},\,\sM^2\beta^{-2/3}\big)
\;\ge\; 2\max\Big(24D^2\rho,\tfrac{1728\beta}{\rho}\Big)
        \max\big(\beta^{1/3},\sM^2\beta^{-2/3}\big)\\
\;\ge\;& 288\sqrt2\,D\;\sqrt{\beta}\,
        \max\big(\beta^{1/3},\sM^2\beta^{-2/3}\big),
\end{aligned}}
\end{equation*}
using $\max(a,b)\ge\sqrt{ab}$, and the map
$b\mapsto\sqrt b\max(b^{1/3},\sM^2b^{-2/3})$ is minimized exactly at
$b=\sM^2$ with value $\sM^{5/3}$. The floor $\Omega(\Lam^{5/6})$ follows.

This certificate concerns parameter tuning within the transferred analysis.
it is not an algorithmic or information-theoretic lower bound. The next
proposition complements it by showing that the pre-clipping estimator can
attain the multilevel second-moment scale, without making a claim about the
clipped estimator.

\begin{proposition}[Pre-clipping second-moment inflation, proof in
\cref{app:lower}]\label{prop:inflation}
There is a two-state chain and an oracle satisfying
\cref{asm:smooth,asm:bounded,asm:noise,asm:markov} such that the MLMC
gradient error at $t=0$ obeys
\begin{equation*}
  \E\norm{s_0^{\mathrm{pre}}}^2
  \;\ge\;\frac{\Gs^2\,\tmix}{16}\,
  \log\!\Big(\frac{T}{64\,\tmix}\Big)-\Gs^2 ,
\end{equation*}
which is $\Omega(\tmix\log T\cdot\Gs^2)=\Omega(\Lam\Gs^2)$ up to absolute
constants when $T$ is sufficiently large relative to $\tmix$ and the
displayed right-hand side is positive. This construction certifies only the
asymptotic pre-clipping moment scale. It is not a lower bound for the
clipped estimator and is not used by the upper-complexity theorem.
\end{proposition}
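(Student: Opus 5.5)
Construct a two-state chain where the chain is slow to mix, so that within a burst of length below the mixing scale the samples are nearly constant, and level corrections at each dyadic level $2^j \gtrsim \tmix$ contribute order $\tmix\Gs^2$ to the second moment. Concretely, take $\Z=\{-1,+1\}$ with symmetric transitions that flip with probability $p$, so $\pi$ is uniform. Take $\nabla f(x;z)=\Gs z\,u$ for a fixed unit vector $u$ (a linear $f(x;z)=\Gs z\inner{u}{x}$ with $h=0$ and $\X$ a small ball), so $\nabla f(x)=0$. This satisfies \cref{asm:smooth,asm:bounded,asm:noise} with $L=0$ and any $G\ge0$ (take $G=\Ghat=\Gs$). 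The mixing time is $\tmix=\Theta(1/p)$, and the standard bound for the symmetric two-state chain is $\dtv(P^k(z,\cdot),\pi)=\tfrac12(1-2p)^k$. Start the chain at stationarity, so $s_0^{\mathrm{pre}}=\Gs\,\mlmc{0}[z]\,u$.

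\textbf{Decomposition.} Write $\hat\mu^j$ for the prefix averages of $z$. By independence of $J_0$,
\begin{equation*}
\E\norm{s_0^{\mathrm{pre}}}^2\ge \Gs^2\Big(\sum_{j=1}^{\jmax}2^{j}\,\E\big(\hat\mu^{j}-\hat\mu^{j-1}\big)^2\Big)-\Gs^2 .
\end{equation*}
This comes from expanding $(a+b)^2\ge \tfrac12 b^2-a^2$, or more cleanly $(a+b)^2\ge b^2/2-a^2$ with $|a|=|\hat\mu^0|\le1$; the constants should be chosen to match the $-\Gs^2$ and $1/16$. Note $\hat\mu^j-\hat\mu^{j-1}=\tfrac12(B_j-A_j)$, where $A_j$ and $B_j$ are the averages of the first and second halves of the $2^j$ block.

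\textbf{Per-level bound.} For $2^{j}\gtrsim\tmix$, compute $\E(A_j-B_j)^2$ from the stationary covariance $\Cov(z_i,z_k)=(1-2p)^{|i-k|}$. The variance of a block average of length $m=2^{j-1}\gg 1/p$ is about $1/(pm)$, and the cross covariance between adjacent blocks is only $O(1/(pm)^2)$. Hence $\E(\hat\mu^j-\hat\mu^{j-1})^2\ge c/(p\,2^{j})$, so each such level contributes at least $c/p\ge c'\tmix$. The number of levels with $2^j\in[64\tmix,T/2]$ is about $\log(T/(64\tmix))$, which gives the stated bound after fixing constants ($1/16$).

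\textbf{Main obstacle.} The hard step is the explicit, nonasymptotic lower bound on the block-difference variance with clean constants tied to $\tmix$ rather than $1/p$. This requires relating $\tmix$ to $p$ (e.g. $\tmix\le \lceil \ln 2/(2p)\rceil$-type bounds via $(1-2p)^k\le 1/2$) and summing the geometric covariance series exactly. I would carry this out with the closed form $\sum_{i,k\le m}\rho^{|i-k|}=m\frac{1+\rho}{1-\rho}-\frac{2\rho(1-\rho^m)}{(1-\rho)^2}$, where $\rho=1-2p$.
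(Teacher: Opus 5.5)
Your proposal is correct and follows essentially the paper's own proof. The ingredients all match: the symmetric two-state chain with flip probability $p$ started at stationarity and oracle $\Gs z\,u$; the split $\E(a+b)^2\ge\tfrac12\E b^2-\E a^2$, with the independence of $J_0$ turning $\E b^2$ into $\sum_j 2^{j}\E(\hat\mu^{j}-\hat\mu^{j-1})^2$; the exact geometric covariance sums showing the adjacent-block covariance is lower order once $2^{j-1}\ge 1/p$; and the conversion $\tmix\le 1/p\le 6\tmix$ (for $p\le\tfrac14$) before counting levels.

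The only slips are in constants. First, your display drops the factor $\tfrac12$ that your stated justification produces; the display happens to hold here anyway, because telescoping over $J_0$ gives $\E[\hat\mu^{0}b]=\E[z^{(1)}\hat\mu^{\jmax}]-1\ge-1$ by the positive correlations. Second, the threshold $2^{j}\ge64\tmix$ guarantees only about $\log(T/(128\tmix))$ levels, whereas the paper's threshold $2^{j}\ge16\tmix$ (sufficient since $2/p\le12\tmix$) yields the stated $\log(T/(64\tmix))$.
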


\subsection{Scope and Limitations}
\label{sec:discussion}

\begin{remark}\label{rem:claims}
Three limitations delimit the results. First, clipping is an algorithmic
modification that enforces the pathwise error bound required by the
transferred analysis. For the original i.i.d.\ MVR2 estimator of
\citep{yuan2026alfcg}, establishing this bound remains the open challenge
stated in Remark~23. For our raw capped multilevel estimator,
\cref{prop:esssup} rules out a horizon-independent bound at the required
scale. Clipping therefore sidesteps, rather than resolves, the original
unclipped question.

Second, the capped multilevel estimator has expected logarithmic sample
cost but remains heavy-tailed, so all sample-complexity statements are in
expectation. Third, \cref{thm:lower} concerns parameter tuning within the
transferred constant propagation. It is neither an information-theoretic
lower bound nor a lower bound for the clipped estimator. A different
estimator or a clipping-aware Lyapunov analysis may therefore improve the
mixing dependence.
\end{remark}
\section{Conclusions}\label{sec:conclusion}

In this paper, we introduced \algname{} for nonconvex composite optimization
under a fixed-chain streaming Markovian oracle. MaC-SPIDER already provides
single-trajectory variance reduction for smooth unconstrained optimization
\citep{NEURIPS2025_1cdbce34}. To our knowledge, \algname{} gives the first
guarantee at the different intersection of recursive variance reduction, a
projection-free composite LMO, and Lipschitz-constant-free conditional-gradient
steps. The technical core is a reduction: a coupled MLMC estimator controls
the state-dependent conditional bias, clipping supplies the pathwise
interface, and the resulting momentum recursion transfers to the adaptive
i.i.d.\ descent analysis. The resulting two scalar substitutions,
together with explicit constant propagation, yield
$\Ot((\tmix^{2}\Gs+\tmix^{5/2}\Gs^2)\varepsilon^{-3}
+\tmix^{5}\varepsilon^{-2})$ expected sample complexity for the generalized
Frank--Wolfe gap. An exactly noiseless specialization separately attains
$\Ot(\varepsilon^{-2})$ with mixing-time-free constants. For fixed $D>0$
and positive noise, a parameter-tuning lower bound matches the leading
mixing-inflation exponent within the transferred analysis. We also obtain a
mixing-time-oblivious variant at
$\Ot(\tmix^{6}\varepsilon^{-3}+\tmix^{3}\varepsilon^{-2})$ for fixed
problem constants. The hidden constant is not uniform in the centered-noise
bound. The method remains oblivious to that bound's value and preserves
Lipschitz-constant-freeness. The tuned stochastic terms remain
noise-sensitive and vanish with $\Gs$. The tuned variant requires certified
bounds on both $\tmix$ and $\Gs$. The oblivious variant requires neither.
Two controlled numerical illustrations indicate reduced relative
deterioration under stronger dependence and show that clipping suppresses
rare estimator excursions on a nonconvex composite instance. They do not
show absolute empirical superiority.

\paragraph{Limitations.}
Our scope is the fixed-chain streaming model. The theory does not cover
on-policy reinforcement learning, where the transition kernel depends on the
decision variable. It also excludes generally nonstationary replay buffers and
temporal-difference learning, whose semi-gradient need not be the gradient of
an objective of the form $f+h$. It does cover the fixed-chain applications
listed in \cref{sec:intro}, including off-policy evaluation under a fixed
behavior policy. Within the model, several costs remain. Clipping requires a
known radius $\Ghat\ge G$. The tuned variant requires mixing-time knowledge,
although the oblivious variant does not. The complexity statements hold in
expectation, and the base method requires convex $h$.

\paragraph{Outlook.}
Three questions strike us as natural next steps. First,
\emph{clipping-aware variance analysis}: our bounds estimate the second
moment of the pre-clipped error. Whether clipping provably reduces the
effective $\sM^2$ itself is open, and would attack the
$\tmix$-exponent from the side that \cref{thm:lower} leaves open. Second,
\emph{information-theoretic lower bounds} in $\tmix$ for the composite
conditional-gradient/FW-gap setting. MaC-SPIDER already gives such lower
bounds for smooth unconstrained Markovian variance reduction
\citep{NEURIPS2025_1cdbce34}, but they do not directly cover our geometry.
Third, the
same reduction template plausibly applies to other gradient-based methods
whose analyses are driven by a STORM-type recursion, including
proximal-linearized ADMM and orthogonalized-momentum methods. The mechanism
(mixing inside the burst, bias absorbed by contraction) is not specific to
conditional gradient. Carrying it out requires reconciling each method's
additional schedules with the Markovian bias term and is left to future
work.

\bibliographystyle{plainnat}
\bibliography{refs}

\clearpage
\appendix
\section{Notation Summary}\label[appendix]{app:notation}

\cref{tab:notation-problem,tab:notation-analysis} collect the recurring
symbols of the paper, grouped by origin. Symbols are listed in order of
first appearance.

\begin{table}[ht]
\centering
\caption{Notation: problem, oracle, and algorithm.}
\label{tab:notation-problem}
\small
\begin{tabular}{@{}ll@{}}
\toprule
symbol & meaning \\
\midrule
\multicolumn{2}{@{}l}{\emph{Problem and assumptions (\cref{sec:setup})}}\\
$\X$, $D$ & feasible set (compact and convex) and its diameter \\
$f(\cdot\,;z)$, $f$, $h$, $F$ & component loss, $f=\E_{z\sim\pi}[f(\cdot\,;z)]$, convex part, and $F=f+h$ \\
$x^\ast$ & a minimizer of $F$ over $\X$ \\
$L$ & individual smoothness constant (\cref{asm:smooth}) \\
$G$ & bound on $\norm{\nabla f(x)}$ and on subgradients of $h$ \\
$\Ghat$ & known clipping radius, $\Ghat\ge G$ (\cref{asm:bounded}) \\
$\Gs$ & centered noise level $\sup_{x,z}\norm{\nabla f(x;z)-\nabla f(x)}$ \\
$P$, $\pi$ & transition kernel and stationary distribution (\cref{asm:markov}) \\
$\dmix(k)$, $\tmix$ & mixing coefficient and mixing time \\
$\LMO(g)$ & composite linear minimization oracle $\argmin_{v\in\X}\inner{v}{g}+h(v)$ \\
$\gap(x)$ & generalized Frank--Wolfe gap (\cref{def:fwgap}) \\
$\Pi_B$, $\clip$ & Euclidean projection onto $B$ and clipping operator $\Pi_{\{g:\norm{g}\le\Ghat\}}$ \\
\midrule
\multicolumn{2}{@{}l}{\emph{Algorithm (\cref{alg:main})}}\\
$T$, $\hat t$ & horizon and output index, uniform on $\{0,\dots,T\}$ \\
$\rho$, $\beta$ & parameters of the adaptive scales $L_t$, $\alpha_t$ \\
$u_i$, $\hat\alpha_k$, $\alpha_t$ & momentum-weight accumulators and momentum weight \\
$L_t$ & adaptive Lipschitz-constant-free scale \\
$g_t$, $\gpre$ & clipped gradient estimate and its pre-clipping value \\
$v_t$, $\etabar_t$ & LMO output and adaptive short step \\
\midrule
\multicolumn{2}{@{}l}{\emph{MLMC estimator (\cref{sec:mlmc})}}\\
$J_t$, $N_t$, $\jmax$ & MLMC level $\sim\mathrm{Geom}(\frac12)$, burst length, and level cap $\lfloor\log_2 T\rfloor$ \\
$z_t^{(1)},\dots,z_t^{(N_t)}$ & consecutive chain samples of the $t$-th burst \\
$\hat\mu^{j}[\varphi]$, $\mlmc{t}[\varphi]$ & prefix averages and capped MLMC estimator \cref{eq:mlmc} \\
$\ghat_t(y)$, $\widehat\Delta_t$ & MLMC gradient estimate at $y$ and coupled difference $\ghat_t(x_t)-\ghat_t(x_{t-1})$ \\
$\F_{t-1}$, $\E_{t-1}$ & filtration before the $t$-th burst and $\E[\,\cdot\mid\F_{t-1}]$ \\
\bottomrule
\end{tabular}
\end{table}

\begin{table}[ht]
\centering
\caption{Notation: analysis quantities.}
\label{tab:notation-analysis}
\small
\ifdefined\JOTAmode
\resizebox{\linewidth}{!}{%
\fi
\begin{tabular}{@{}ll@{}}
\toprule
symbol & meaning \\
\midrule
$s_t$, $\spre$ & gradient-estimate error $g_t-\nabla f(x_t)$ and its pre-clipping value \\
$n_t$, $e_t$ & single-point MLMC error and coupled-difference error (\cref{cor:inst}) \\
$\delta_t$ & step magnitude $L_t\norm{x_{t+1}-x_t}$ \\
$\Lam$ & second-moment inflation factor $306\,\tmix(1+\log_2 T)$ \cref{eq:effective} \\
$\sM^2$, $\LM^2$ & effective noise $2\Lam\Gs^2$ and effective smoothness $2\Lam L^2$ \\
$T_0$ & burn-in horizon $\lceil(128\,\tmix)^{3/4}\rceil$ \\
$\sbar$ & pathwise error bound $2\Ghat$ (\cref{lem:clipsafe}) \\
$\dbar$, $\dbarM$ & pathwise displacement bound of the base analysis and its Markov instantiation \\
$\kappa$, $\kapM$ & stepsize-ratio constant of \citep{yuan2026alfcg} and its Markov instantiation \\
$\Bdot$, $\Bddot$ ($\BdotM$, $\BddotM$) & aggregate recursion constants (Markov instantiations) \\
$Y_0,Y_1,Z_0,Z_1$ & constants of the chain of \citep[Appendix~H]{yuan2026alfcg} \\
$\Rcal$, $\Scal$ & coefficients of $(T+1)^{-1/3}$ and $(T+1)^{-1/2}$ in \cref{eq:tunedbound} \\
$\Lcal$ & logarithmic factor $\max\{1,\ln(1+\dbarM^2T)\}$ \\
$\Ot(\cdot)$, $\Tht(\cdot)$ & hide polylogarithmic factors in $T$, $\tmix$, and $1/\varepsilon$, with $\log\coloneqq\log_2$ \\
\bottomrule
\end{tabular}
\ifdefined\JOTAmode
}%
\fi
\end{table}
\section{Proof for \texorpdfstring{\cref{sec:res-mlmc}}{the MLMC Section}: The Conditional-Moment Lemma}\label[appendix]{app:mlmc}

\paragraph{Setup and notation for this appendix.}
Fix $t$ and abbreviate $\varphi_i\coloneqq\varphi(z_t^{(i)})$,
$\E_{t-1}[\cdot]=\E[\cdot\mid\F_{t-1}]$. Conditionally on $\F_{t-1}$ the
burst is a Markov chain started from the $\F_{t-1}$-measurable current
state, i.e.\ $z_t^{(i)}\sim P^{i}(z_{\mathrm{cur}},\cdot)$ given
$\F_{t-1}$. We use the coupling convention of \cref{sec:mlmc}: let
$(z_t^{(i)})_{i\ge1}$ denote the chain continued indefinitely, on which the
prefix averages $\hat\mu^{j}=\hat\mu^{j}[\varphi]$ are defined for all
$j\le\jmax$ simultaneously. The algorithm reveals only the first $N_t$
states, and for the realized $J_t$ the estimator \cref{eq:mlmc} coincides
with the corresponding expression on the coupled trajectory. $J_t$ is
independent of $\F_{t-1}$ and of the trajectory. Finally, for $i<k$ let
$\mathcal{G}_i$ denote the $\sigma$-algebra generated by $\F_{t-1}$ and
$z_t^{(1)},\dots,z_t^{(i)}$, with $\mathcal G_0\coloneqq\F_{t-1}$.

We first record the vector-valued total-variation bound used throughout.

\begin{lemma}[TV--Bochner bound]\label{lem:tvbochner}
Let $\mu,\nu$ be probability measures on $\Z$ and $\psi:\Z\to\R^n$ with
$\sup_z\norm{\psi(z)}\le H$. Then
$\norm{\int\psi\,d(\mu-\nu)}\le 2\,\dtv(\mu,\nu)\,H$.
\end{lemma}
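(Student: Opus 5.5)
The plan is to reduce the vector-valued statement to a scalar one by duality, and then use the standard scalar fact that a bounded function integrates against a signed measure of total mass $2\,\dtv$. This gives a bound with no dimension factor.

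If $\int\psi\,d(\mu-\nu)=0$ there is nothing to prove. Otherwise set
\begin{equation*}
  w \coloneqq \frac{\int\psi\,d(\mu-\nu)}{\norm{\int\psi\,d(\mu-\nu)}} ,
\end{equation*}
a fixed unit vector. The integral is well defined as a Bochner integral because $\psi$ is bounded and measurable and $\mu,\nu$ are finite measures. Since the inner product with a fixed vector is a continuous linear functional, it commutes with the integral, so
\begin{equation*}
  \Big\lVert\int\psi\,d(\mu-\nu)\Big\rVert=\int\phi\,d(\mu-\nu),
  \qquad \phi(z)\coloneqq\inner{w}{\psi(z)} .
\end{equation*}
By Cauchy--Schwarz, $\phi$ is a real function with $\abs{\phi}\le H$.

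For the scalar bound, I would take the Hahn--Jordan decomposition $\mu-\nu=(\mu-\nu)^+-(\mu-\nu)^-$. Both parts have total mass $\dtv(\mu,\nu)$, since $(\mu-\nu)(\Z)=0$ and $\dtv(\mu,\nu)=\sup_A\abs{\mu(A)-\nu(A)}$. Then
\begin{equation*}
  \int\phi\,d(\mu-\nu)\le H\,(\mu-\nu)^+(\Z)+H\,(\mu-\nu)^-(\Z)=2H\,\dtv(\mu,\nu).
\end{equation*}
Chaining the two displays gives the claim.

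There is no real obstacle here. The only point needing care is the convention for $\dtv$: with the supremum-over-sets normalisation used in \cref{asm:markov} (following \citep{levin2017markov}), the factor $2$ is exactly right. A sharper constant is available for centered $\psi$ via $\phi-c$, but it is not needed.
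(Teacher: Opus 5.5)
Your proof is correct. It differs from the paper's only in how the vector-valued step is justified. The paper works with $\lambda=\mu-\nu$ directly and applies the norm inequality for integrals against a signed measure, $\norm{\int\psi\,d\lambda}\le\int\norm{\psi}\,d\abs{\lambda}\le H\,\abs{\lambda}(\Z)$, together with $\abs{\lambda}(\Z)=2\dtv(\mu,\nu)$. The absence of a dimension factor is therefore inherited from that cited inequality. You instead pair with the single unit vector $w$ pointing along the integral, reduce to a scalar $\phi$ with $\abs{\phi}\le H$, and split $\lambda$ by Hahn--Jordan. In effect you prove the special case of the paper's inequality that is needed. The one fixed direction $w$ also makes it transparent why no $\sqrt n$ appears, which is exactly what a coordinatewise bound would lose. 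Both routes rest on the same fact: each Jordan part has mass $\dtv(\mu,\nu)$ under the sup-over-sets convention. The paper's version is a one-liner given the Bochner inequality, while yours is self-contained. Because you integrate $\phi$ against $\lambda$ rather than $\abs{\lambda}$, your route can also use $\lambda(\Z)=0$ to obtain the refinement $\norm{\int\psi\,d\lambda}\le\dtv(\mu,\nu)\sup_{z,z'}\norm{\psi(z)-\psi(z')}$. Your closing aside is slightly off, however: this refinement does not sharpen the constant for centered $\psi$ in general. Take $\Z=\{a,b\}$, $\psi(a)=-\psi(b)=Hu$ for a unit vector $u$, $\mu=\delta_a$ and $\nu=\delta_b$. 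Then the left side is $2H$ while $\dtv(\mu,\nu)=1$, and this $\psi$ is centered under the uniform law. The gain appears only when the range of $\psi$ has diameter below $2H$.
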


\begin{proof}
With $\lambda\coloneqq\mu-\nu$ (a signed measure of total variation
$\norm{\lambda}_{\mathrm{TV}}=2\dtv(\mu,\nu)$),
\begin{equation*}
\begin{aligned}
\Big\lVert\int\psi\,d\lambda\Big\rVert
\;\le\;& \int\norm{\psi}\,d\abs{\lambda}
\;\le\; H\,\norm{\lambda}_{\mathrm{TV}}
\;=\; 2\,\dtv(\mu,\nu)\,H .
\end{aligned}
\end{equation*}
No dimension factor appears.
\end{proof}

\begin{lemma}[Second moment of a consecutive-segment average]
\label{lem:segment}
For every offset $\ell\ge0$ and length $m\ge1$, let
$\hat\mu_{\ell}^{(m)}\coloneqq
\frac1m\sum_{r=1}^m\varphi_{\ell+r}$ be the average of the $m$ samples
immediately following $\mathcal G_\ell$. Then
\begin{equation*}
  \E\big[\norm{\hat\mu_{\ell}^{(m)}-\bar\varphi}^2
    \,\big|\,\mathcal G_\ell\big]
  \;\le\;\frac{24\,\widetilde H^2\,\tmix}{m}.
\end{equation*}
\end{lemma}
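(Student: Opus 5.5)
We expand the squared norm of the centered average and control the cross terms by mixing. Write $\psi_r\coloneqq\varphi_{\ell+r}-\bar\varphi$, so that $\norm{\psi_r}\le\widetilde H$ surely. Then
\begin{equation*}
  \E\big[\norm{\hat\mu_{\ell}^{(m)}-\bar\varphi}^2\,\big|\,\mathcal G_\ell\big]
  =\frac1{m^2}\sum_{r=1}^m\E\big[\norm{\psi_r}^2\,\big|\,\mathcal G_\ell\big]
  +\frac{2}{m^2}\sum_{1\le r<q\le m}\E\big[\inner{\psi_r}{\psi_q}\,\big|\,\mathcal G_\ell\big].
\end{equation*}
The diagonal part is at most $\widetilde H^2/m$. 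Since $\varphi$ is $\F_{t-1}$-measurable and $\F_{t-1}\subseteq\mathcal G_{\ell+r}$, the function $\psi$ is fixed under conditioning, which justifies the next step.

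For each cross term with $r<q$, we condition on $\mathcal G_{\ell+r}$ and use the Markov property. Given $\mathcal G_{\ell+r}$, the state $z_t^{(\ell+q)}$ has law $P^{q-r}(z_t^{(\ell+r)},\cdot)$. Hence
\begin{equation*}
\E[\psi_q\mid\mathcal G_{\ell+r}]=\int(\varphi-\bar\varphi)\,d\big(P^{q-r}(z_t^{(\ell+r)},\cdot)-\pi\big),
\end{equation*}
using $\int(\varphi-\bar\varphi)\,d\pi=0$. By \cref{lem:tvbochner} with $H=\widetilde H$, this has norm at most $2\widetilde H\,\dmix(q-r)$, uniformly in the state. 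Cauchy--Schwarz then gives
\begin{equation*}
\big|\E[\inner{\psi_r}{\psi_q}\mid\mathcal G_\ell]\big|
=\big|\E[\inner{\psi_r}{\E[\psi_q\mid\mathcal G_{\ell+r}]}\mid\mathcal G_\ell]\big|
\le2\widetilde H^2\,\dmix(q-r).
\end{equation*}
Summing over pairs and using the summability bound $\sum_{k\ge1}\dmix(k)\le2\tmix$ from \cref{asm:markov},
\begin{equation*}
\frac{2}{m^2}\sum_{r<q}2\widetilde H^2\dmix(q-r)\le\frac{4\widetilde H^2}{m^2}\cdot m\cdot2\tmix=\frac{8\widetilde H^2\tmix}{m}.
\end{equation*}

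The total is therefore at most $(1+8\tmix)\widetilde H^2/m\le 9\widetilde H^2\tmix/m\le24\widetilde H^2\tmix/m$, since $\tmix\ge1$. The stated constant $24$ leaves slack, which covers a cruder summation, for example bounding $\dmix(k)$ only through $\dmix(\ell\tmix)\le2^{-\ell}$ in blocks.

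The main point requiring care is the conditioning step. We must check that $\mathcal G_{\ell+r}$ contains $\F_{t-1}$, so that $\varphi$ is fixed. We must also check that the continued-trajectory convention makes the burst a genuine Markov chain given $\mathcal G_{\ell+r}$, independent of $J_t$. Both follow from the setup paragraph of this appendix. Everything else is bookkeeping.
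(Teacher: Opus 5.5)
Your proof is correct and follows the paper's argument essentially line for line. You expand the square, bound the diagonal by $\widetilde H^2/m$, condition each cross term on $\mathcal G_{\ell+r}$ and apply \cref{lem:tvbochner} with the Markov property, and sum with $\sum_{k\ge1}\dmix(k)\le2\tmix$ to get $9\widetilde H^2\tmix/m\le24\widetilde H^2\tmix/m$.
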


\begin{proof}
\begin{equation*}
  \psi_r\coloneqq\varphi_{\ell+r}.
\end{equation*}
\begin{equation*}
\paperfitdisplay{\begin{aligned}
\E\big[\norm{\hat\mu_{\ell}^{(m)}-\bar\varphi}^2
  \,\big|\,\mathcal G_\ell\big]
={}& \frac1{m^2}\sum_{i=1}^m\sum_{k=1}^m
  \E\big[\inner{\psi_i-\bar\varphi}{\psi_k-\bar\varphi}
    \,\big|\,\mathcal G_\ell\big]\\
\le{}& \frac1{m^2}\Big[\sum_{i=1}^m
  \E\big[\norm{\psi_i-\bar\varphi}^2\,\big|\,\mathcal G_\ell\big]
  +2\sum_{i<k}\E\big[\inner{\psi_i-\bar\varphi}
        {\E[\psi_k-\bar\varphi\mid\mathcal{G}_{\ell+i}]}
    \,\big|\,\mathcal G_\ell\big]\Big]\\
\cle{1}{}& \frac1{m^2}\Big[m\widetilde H^2
  +2\sum_{i<k}\widetilde H\cdot2\widetilde H\,\dmix(k-i)\Big]\\
\cle{2}{}& \frac1{m^2}\Big[m\widetilde H^2+4\widetilde H^2\cdot m\cdot2\tmix\Big]
\;\le\;\frac{9\,\widetilde H^2\tmix}{m}
\;\le\;\frac{24\,\widetilde H^2\tmix}{m},
\end{aligned}}
\end{equation*}
where
\stepmark{1} uses $\norm{\psi_i-\bar\varphi}\le\widetilde H$ pointwise,
the Cauchy--Schwarz inequality, and
$\E[\psi_k-\bar\varphi\mid\mathcal{G}_{\ell+i}]
=\int(\varphi-\bar\varphi)\,
d\big(P^{k-i}(z_t^{(\ell+i)},\cdot)-\pi\big)$
combined with \cref{lem:tvbochner} and the definition of $\dmix$ as a
supremum over the start state.
\stepmark{2} uses
$\sum_{i<k}\dmix(k-i)\le m\sum_{d\ge1}\dmix(d)\le2m\tmix$
(\cref{asm:markov}), and the last two steps use $\tmix\ge1$.
(We state the conservative constant $24$ for headroom in what follows.)
\end{proof}

\begin{lemma}[Per-level variance]\label{lem:perlevel}
For $1\le j\le\jmax$ let
$V_j\coloneqq\E_{t-1}\norm{\hat\mu^{j}-\hat\mu^{j-1}}^2$. Then
\begin{equation*}
  V_j\;\le\;\min\Big\{4\widetilde H^2,\;
  \frac{144\,\widetilde H^2\tmix}{2^{j}}\Big\}.
\end{equation*}
\end{lemma}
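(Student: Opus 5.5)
The claim has two bounds, and I will prove each directly; the minimum then follows at once.

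\emph{The trivial bound $4\widetilde H^2$.} Every prefix average $\hat\mu^{j}$ is a convex combination of the values $\varphi_i$. Each of these satisfies $\norm{\varphi_i-\bar\varphi}\le\widetilde H$ pointwise. Hence $\norm{\hat\mu^{j}-\bar\varphi}\le\widetilde H$ and $\norm{\hat\mu^{j-1}-\bar\varphi}\le\widetilde H$ hold almost surely. The triangle inequality then gives $\norm{\hat\mu^{j}-\hat\mu^{j-1}}\le2\widetilde H$ pointwise, and squaring and taking $\E_{t-1}$ yields $V_j\le4\widetilde H^2$.

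\emph{The mixing bound.} Here the plan is to split the longer average into its two halves of length $m\coloneqq2^{j-1}$. The first half is exactly $\hat\mu^{j-1}=\hat\mu_{0}^{(m)}$. The second half is $\hat\mu_{m}^{(m)}$, the average of the samples $z_t^{(m+1)},\dots,z_t^{(2m)}$. With this notation
\begin{equation*}
  \hat\mu^{j}-\hat\mu^{j-1}
  =\tfrac12\big(\hat\mu_{m}^{(m)}-\bar\varphi\big)
   -\tfrac12\big(\hat\mu_{0}^{(m)}-\bar\varphi\big).
\end{equation*}
Applying $\norm{a-b}^2\le2\norm{a}^2+2\norm{b}^2$ gives
\begin{equation*}
  V_j\le\tfrac12\E_{t-1}\norm{\hat\mu_{m}^{(m)}-\bar\varphi}^2
  +\tfrac12\E_{t-1}\norm{\hat\mu_{0}^{(m)}-\bar\varphi}^2 .
\end{equation*}
I then bound each term with \cref{lem:segment}. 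For the offset-$0$ term the lemma applies directly, since $\mathcal G_0=\F_{t-1}$. For the offset-$m$ term, I apply the lemma conditionally on $\mathcal G_m$ and use the tower property to pass to $\E_{t-1}$. The lemma's bound $24\widetilde H^2\tmix/m$ is uniform over the start state, which is what licenses this step. Each term is therefore at most $24\widetilde H^2\tmix/m=48\widetilde H^2\tmix/2^{j}$, so $V_j\le48\widetilde H^2\tmix/2^j$. This is already inside the stated constant $144$. Even the cruder route that bounds $\hat\mu^{j}-\bar\varphi$ and $\hat\mu^{j-1}-\bar\varphi$ separately gives $2(24/2^j+48/2^j)=144/2^j$, which matches the paper's constant exactly. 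I would present that cruder route to reproduce $144$ verbatim.

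The main point to check is the conditional use of \cref{lem:segment} at offset $m$. It requires the state $z_t^{(m)}$ to be $\mathcal G_m$-measurable and the continuation of the chain to be Markov with kernel $P$. Both hold under the coupling convention, because $J_t$ is independent of the trajectory. The deterministic level index $j\le\jmax$ also means no randomness from $J_t$ enters $V_j$. Everything else is routine algebra.
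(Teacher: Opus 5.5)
Your proof is correct and follows the paper's argument. You use the same split of $\hat\mu^{j}-\hat\mu^{j-1}$ into two half-segment averages, the bound $\norm{a-b}^2\le2\norm{a}^2+2\norm{b}^2$, and \cref{lem:segment} at offsets $0$ and $2^{j-1}$ with the tower property, which gives $48\widetilde H^2\tmix/2^j$; your triangle-inequality cap through $\bar\varphi$ is a harmless variant of the paper's sharper $\widetilde H^2$ cap. The cruder route you mention, applying \cref{lem:segment} at offset $0$ to $\hat\mu^{j}$ and $\hat\mu^{j-1}$ separately, is also valid and even avoids conditioning on $\mathcal G_{2^{j-1}}$, at the price of the larger constant $144$.
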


\begin{proof}
Write $n\coloneqq2^{j-1}$, and let $\hat A\coloneqq\hat\mu^{j-1}$ (average
of the first $n$ samples) and
$\hat B\coloneqq\frac1n\sum_{i=n+1}^{2n}\varphi_i$ (average of the second
$n$), so that $\hat\mu^{j}-\hat\mu^{j-1}=\tfrac12(\hat B-\hat A)$.
For the trivial cap,
\begin{equation*}
\begin{aligned}
\norm{\hat\mu^{j}-\hat\mu^{j-1}}
\;=\;&\tfrac12\norm{\hat B-\hat A}
\;\le\;\tfrac12\big(\norm{\hat B-\bar\varphi}+\norm{\hat A-\bar\varphi}\big)
\;\le\;\widetilde H
\qquad\text{pointwise},
\end{aligned}
\end{equation*}
so $V_j\le\widetilde H^2\le4\widetilde H^2$. For the mixing bound,
\begin{equation*}
\begin{aligned}
V_j
\;=\;&\tfrac14\,\E_{t-1}\norm{\hat B-\hat A}^2
\;\le\;\tfrac12\,\E_{t-1}\norm{\hat A-\bar\varphi}^2
   +\tfrac12\,\E_{t-1}\norm{\hat B-\bar\varphi}^2\\
\cle{1}\;& \tfrac12\cdot\frac{24\widetilde H^2\tmix}{n}
   +\tfrac12\cdot\frac{24\widetilde H^2\tmix}{n}
\;=\;\frac{24\,\widetilde H^2\tmix}{2^{j-1}}
\;=\;\frac{48\,\widetilde H^2\tmix}{2^{j}}
\;\le\;\frac{144\,\widetilde H^2\tmix}{2^{j}},
\end{aligned}
\end{equation*}
where \stepmark{1} applies \cref{lem:segment} to $\hat A$ with
$\ell=0$, and to $\hat B$ with $\ell=n$ (the segment following the
first-half information), each with $m=n=2^{j-1}$. The tower property
then removes the conditioning on $\mathcal G_n$ in the second bound.
\end{proof}

\begin{proof}[Proof of \cref{lem:mlmc}]
\emph{(a) Conditional telescoping.}
Since $J_t$ is independent of $\F_{t-1}$ and of the coupled trajectory, and
each $\hat\mu^{j}$ is a function of the trajectory alone,
\begin{equation*}
\paperfitdisplay{\begin{aligned}
\E\big[\mlmc{t}[\varphi]\,\big|\,\F_{t-1}\big]
\ceq{1}{}& \E\big[\hat\mu^{0}\,\big|\,\F_{t-1}\big]
+\sum_{j=1}^{\jmax}\Prob[J_t=j]\cdot2^{j}
 \Big(\E\big[\hat\mu^{j}\,\big|\,\F_{t-1}\big]
     -\E\big[\hat\mu^{j-1}\,\big|\,\F_{t-1}\big]\Big)\\
={}& \E\big[\hat\mu^{0}\,\big|\,\F_{t-1}\big]
+\sum_{j=1}^{\jmax}
 \Big(\E\big[\hat\mu^{j}\,\big|\,\F_{t-1}\big]
     -\E\big[\hat\mu^{j-1}\,\big|\,\F_{t-1}\big]\Big)
\;=\;\E\big[\hat\mu^{\jmax}\,\big|\,\F_{t-1}\big],
\end{aligned}}
\end{equation*}
where \stepmark{1} uses
$\E[X\ind{J_t=j}\mid\F_{t-1}]=2^{-j}\,\E[X\mid\F_{t-1}]$ for
trajectory-measurable $X$. Note the filtration is \emph{pre}-$J_t$: the
telescoping is conditional precisely because $\F_{t-1}$ contains no
information about $J_t$.

\emph{(b) Bias.} With $z_t^{(i)}\sim P^{i}(z_{\mathrm{cur}},\cdot)$ given
$\F_{t-1}$,
\begin{equation*}
\paperfitdisplay{\begin{aligned}
\Big\lVert\E\big[\mlmc{t}[\varphi]\,\big|\,\F_{t-1}\big]-\bar\varphi\Big\rVert
={}& \Big\lVert\frac{1}{2^{\jmax}}\sum_{i=1}^{2^{\jmax}}
  \Big(\E\big[\varphi_i\,\big|\,\F_{t-1}\big]-\bar\varphi\Big)\Big\rVert\\
\cle{1}{}& \frac{1}{2^{\jmax}}\sum_{i=1}^{2^{\jmax}}
  2\widetilde H\,\dmix(i)
\;\le\;\frac{2\widetilde H}{2^{\jmax}}\cdot2\tmix
\;=\;\frac{4\widetilde H\tmix}{2^{\jmax}}
\;\le\;\frac{8\widetilde H\tmix}{T},
\end{aligned}}
\end{equation*}
where \stepmark{1} is \cref{lem:tvbochner} applied to
$P^{i}(z_{\mathrm{cur}},\cdot)$ versus $\pi$, and the final step uses
$2^{\jmax}\ge T/2$. The bound is uniform over the start state
$z_{\mathrm{cur}}$. No stationarity or ergodicity of the iterate process
is used.

\emph{(c) Second moment.} Decompose
$\mlmc{t}[\varphi]-\bar\varphi=(\hat\mu^{0}-\bar\varphi)
+2^{J_t}(\hat\mu^{J_t}-\hat\mu^{J_t-1})\ind{J_t\le\jmax}$. Then
\begin{equation*}
\begin{aligned}
\E_{t-1}\norm{\mlmc{t}[\varphi]-\bar\varphi}^2
\;\le\;& 2\,\E_{t-1}\norm{\hat\mu^{0}-\bar\varphi}^2
+2\,\E_{t-1}\Big\lVert2^{J_t}(\hat\mu^{J_t}-\hat\mu^{J_t-1})
   \ind{J_t\le\jmax}\Big\rVert^2\\
\ceq{1}\;& 2\,\E_{t-1}\norm{\hat\mu^{0}-\bar\varphi}^2
+2\sum_{j=1}^{\jmax}2^{-j}\cdot4^{j}\,V_j
\;\le\;2\widetilde H^2+2\sum_{j=1}^{\jmax}2^{j}V_j ,
\end{aligned}
\end{equation*}
where \stepmark{1} again uses $J_t\perp(\F_{t-1},\text{trajectory})$.
only one level is ever drawn, so no cross-level terms appear beyond the
single $(a+b)^2\le2a^2+2b^2$ above.
If $T\ge\tmix$, split the level sum at the mixing scale
(\cref{lem:perlevel}):
\begin{equation*}
\begin{aligned}
\sum_{j=1}^{\jmax}2^{j}V_j
\;\le\;& \sum_{j:\,2^{j}\le\tmix}2^{j}\cdot4\widetilde H^2
  +\sum_{j:\,\tmix<2^{j}\le2^{\jmax}}144\,\widetilde H^2\tmix\\
\cle{1}\;& 8\widetilde H^2\tmix
  +144\,\widetilde H^2\tmix\big(1+\log_2(T/\tmix)\big),
\end{aligned}
\end{equation*}
where \stepmark{1} uses the geometric sum
$\sum_{2^{j}\le\tmix}2^{j}\le2\tmix$ and the level count
$\#\{j:\tmix<2^{j}\le2^{\jmax}\}
\le\lfloor\log_2 T\rfloor-\lfloor\log_2\tmix\rfloor
\le1+\log_2(T/\tmix)$
(the ``$+1$'' from the floors is not optional). Hence
\begin{equation*}
\begin{aligned}
\E_{t-1}\norm{\mlmc{t}[\varphi]-\bar\varphi}^2
\;\le\;& 2\widetilde H^2
  +\big[304+288\log_2(T/\tmix)\big]\widetilde H^2\tmix\\
\;\le\;& 306\,\widetilde H^2\tmix\,(1+\log_2 T)
\;=\;\Lam\widetilde H^2,
\end{aligned}
\end{equation*}
using $2\le2\tmix$, $304+2=306$, $288\le306$, and
$\log_2(T/\tmix)\le\log_2 T$.
If instead $T<\tmix$, every level takes the trivial cap:
$\sum_j2^{j}V_j\le4\widetilde H^2\sum_{j\le\jmax}2^{j}
\le8\widetilde H^2\cdot2^{\jmax}\le8\widetilde H^2T<8\widetilde H^2\tmix$,
so the total is at most
$2\widetilde H^2+16\widetilde H^2\tmix\le18\widetilde H^2\tmix
\le\Lam\widetilde H^2$.
Omitting the trivial cap on the low levels would inflate the sum to
$\Theta(\tmix^2)$ because the chain has not yet mixed and the correlation
bound of \cref{lem:segment} is vacuous there. The cap limits the inflation
to $\Lam=O(\tmix\log T)$.

\emph{(d) Expected samples.} By the capped-sampling rule,
\begin{equation*}
\begin{aligned}
\E[N_t]
\;=\;& \sum_{j=1}^{\jmax}2^{-j}\cdot2^{j}
   +\sum_{j>\jmax}2^{-j}\cdot1
\;=\;\jmax+2^{-\jmax}
\;\le\;\log_2 T+1 .
\end{aligned}
\end{equation*}
Had the algorithm drawn $2^{J_t}$ samples unconditionally, the expectation
would be $\sum_{j\ge1}2^{-j}2^{j}=\infty$. Drawing $J_t$ first and falling
back to the base level when $J_t>\jmax$ is what makes the expected cost
logarithmic.
\end{proof}
\section{Proofs for \texorpdfstring{\cref{sec:res-mlmc,sec:res-reduction}}{the Reduction Sections}}\label[appendix]{app:reduction}

We first record two elementary facts about the adaptive weights.

\begin{lemma}[Deterministic stepsize floor]\label{lem:alphafloor}
For all $t\ge0$, $\alpha_t\ge(t+1)^{-2/3}$, deterministically and with no
upper bound on the $u_i$ required.
\end{lemma}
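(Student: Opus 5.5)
Since $\alpha_t=\min_{0\le k\le t}\hat\alpha_k$, it suffices to show $\hat\alpha_k\ge(k+1)^{-2/3}$ for every $k\ge0$. The function $s\mapsto(s+1)^{-2/3}$ is nonincreasing, so then $\hat\alpha_k\ge(k+1)^{-2/3}\ge(t+1)^{-2/3}$ for all $k\le t$, and taking the minimum over $k$ gives the claim. The case $k=0$ is immediate from the initialization $\hat\alpha_0=1=(0+1)^{-2/3}$.

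For $k\ge1$, the exponent $2/3$ is positive and $x\mapsto x^{2/3}$ is increasing, so the claim reduces to a statement about the base ratio:
\begin{equation*}
  \frac{1+\max_{i<k}u_i}{1+\sum_{i<k}u_i}\;\ge\;\frac1{k+1}.
\end{equation*}
Each $u_i=\beta+L_i^2\norm{x_{i+1}-x_i}^2$ is nonnegative because $\beta>0$. The sum of $k$ nonnegative terms is at most $k$ times their maximum, so with $M\coloneqq\max_{i<k}u_i\ge0$,
\begin{equation*}
  1+\sum_{i<k}u_i\;\le\;1+kM\;\le\;(k+1)(1+M),
\end{equation*}
where the last step uses $1\le k+1$ and $kM\le(k+1)M$. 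Dividing gives the ratio bound, and raising it to the power $2/3$ yields $\hat\alpha_k\ge(k+1)^{-2/3}$.

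No step here is a genuine obstacle, since the argument is pathwise algebra. The one point needing care is that the argument uses only $u_i\ge0$, with no upper bound on $u_i$ and no probabilistic input. That is exactly why the floor holds deterministically, on every sample path, regardless of how large the adaptive scales $L_i$ or the displacements become, as the statement asserts.
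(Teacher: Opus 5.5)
Your proof is correct and follows essentially the same route as the paper: bound $\sum_{i<k}u_i$ by $k\max_{i<k}u_i$, compare the ratio pathwise, and take the running minimum. The only difference is that the paper obtains the slightly sharper $\hat\alpha_k\ge k^{-2/3}$ for $k\ge1$ via $k(1+M_k)\ge1+kM_k$ and then relaxes $t^{-2/3}\ge(t+1)^{-2/3}$, whereas you aim directly at $(k+1)^{-2/3}$.
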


\begin{proof}
Fix $1\le k\le t$ and write $M_k\coloneqq\max_{i<k}u_i$,
$S_k\coloneqq\sum_{i<k}u_i$, so that $S_k\le kM_k$. Then
\begin{equation*}
\begin{aligned}
\hat\alpha_k
\;=\;&\Big(\frac{1+M_k}{1+S_k}\Big)^{2/3}
\;\ge\;\Big(\frac{1+M_k}{1+kM_k}\Big)^{2/3}
\;\cge{1}\;k^{-2/3},
\end{aligned}
\end{equation*}
where \stepmark{1} holds since
$k(1+M_k)\ge1+kM_k\iff k\ge1$. Together with $\hat\alpha_0=1$,
$\alpha_t=\min_{0\le k\le t}\hat\alpha_k\ge t^{-2/3}\ge(t+1)^{-2/3}$
for $t\ge1$, and $\alpha_0=1$.
\end{proof}

\begin{lemma}[Reciprocal-increment bound]\label{lem:alphaincrement}
For all $t\ge0$,
\begin{equation*}
  0\;\le\;\frac1{\alpha_{t+1}}-\frac1{\alpha_t}\;\le\;\frac23
  \qquad\text{deterministically}.
\end{equation*}
\end{lemma}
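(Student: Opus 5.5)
The plan is to reduce the statement to a one-step comparison of consecutive \emph{unclipped} weights $\hat\alpha_t,\hat\alpha_{t+1}$. Then I control the ratio inside the $2/3$-power using only the monotonicity of the running maximum, and finish with concavity of $r\mapsto r^{2/3}$.

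\emph{Lower bound.} Since $\alpha_t=\min_{0\le k\le t}\hat\alpha_k$, the sequence $(\alpha_t)$ is non-increasing. All $\alpha_t>0$, by \cref{lem:alphafloor}, so $1/\alpha_{t+1}\ge 1/\alpha_t$ deterministically.

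\emph{Reduction for the upper bound.} Either $\alpha_{t+1}=\alpha_t$, in which case the increment is $0$, or $\alpha_{t+1}=\hat\alpha_{t+1}<\alpha_t$. In the second case we also have $\alpha_t\le\hat\alpha_t$, hence
\[
\frac1{\alpha_{t+1}}-\frac1{\alpha_t}\le\frac1{\hat\alpha_{t+1}}-\frac1{\hat\alpha_t}.
\]
So it suffices to bound the increment of $1/\hat\alpha_k$. For $k\ge1$, write $M_k=\max_{i<k}u_i$, $S_k=\sum_{i<k}u_i$ and $r_k\coloneqq(1+S_k)/(1+M_k)$, so that $1/\hat\alpha_k=r_k^{2/3}$. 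Set $r_0\coloneqq1$, consistent with $\hat\alpha_0=1$. Note that $r_k\ge1$ because $S_k\ge M_k$.

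\emph{Key step.} I claim $r_{t+1}\le r_t+1$. Since $M_{t+1}\ge M_t$ and $M_{t+1}\ge u_t$,
\[
r_{t+1}=\frac{1+S_t}{1+M_{t+1}}+\frac{u_t}{1+M_{t+1}}\le r_t+\frac{u_t}{1+u_t}<r_t+1 .
\]
For $t=0$ this reads $r_1=(1+u_0)/(1+u_0)=1$ directly.

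\emph{Conclusion.} By the mean value theorem applied to the concave map $r\mapsto r^{2/3}$ on $r\ge1$,
\[
r_{t+1}^{2/3}-r_t^{2/3}\le (r_t+1)^{2/3}-r_t^{2/3}\le \tfrac23\,r_t^{-1/3}\le\tfrac23 .
\]
This gives the upper bound. No bound on the $u_i$ is needed, so the statement holds deterministically.

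The only delicate point is the clipping by the running minimum, that is, the case $\alpha_t<\hat\alpha_t$. It is handled by the reduction step above, because replacing $1/\alpha_t$ by the smaller quantity $1/\hat\alpha_t$ can only enlarge the increment.
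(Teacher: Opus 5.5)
Your proof is correct and follows the paper's route: the same ratio $r_k=(1+S_k)/(1+M_k)$, the one-step bound $r_{t+1}\le r_t+1$, the derivative bound $\tfrac23$ for $x\mapsto x^{2/3}$ on $[1,\infty)$, and the passage to the running maximum that defines $\alpha_t^{-1}$. Your single estimate $r_{t+1}\le r_t+u_t/(1+u_t)$ replaces the paper's case split on whether $u_k$ exceeds $M_k$, and your explicit reduction from $\alpha$ to $\hat\alpha$ spells out what the paper states in one line.
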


\begin{proof}
The case $t=0$ is immediate because $\alpha_0=\alpha_1=1$.
For $k\ge1$, let $S_k\coloneqq\sum_{i<k}u_i$,
$M_k\coloneqq\max_{i<k}u_i$, and
$r_k\coloneqq(1+S_k)/(1+M_k)$, so
$\hat\alpha_k^{-1}=r_k^{2/3}$. Adding $u_k$ gives
$r_{k+1}-r_k\le1$ whenever $r_{k+1}>r_k$. Indeed, if $u_k\le M_k$,
the difference is $u_k/(1+M_k)\le1$. If $u_k>M_k$, then
\begin{equation*}
\begin{aligned}
r_{k+1}-r_k
\;=\;&\frac{S_k}{1+u_k}-\frac{S_k-M_k}{1+M_k}
\;\le\;\frac{M_k}{1+M_k}
\;\le\;1 .
\end{aligned}
\end{equation*}
Since $r_k\ge1$ and $x\mapsto x^{2/3}$ has derivative at most $2/3$ on
$[1,\infty)$,
$(r_{k+1}^{2/3}-r_k^{2/3})_+\le2/3$. Finally,
$\alpha_t^{-1}=\max_{0\le k\le t}\hat\alpha_k^{-1}$, so taking running
maxima preserves the same one-step increment bound.
\end{proof}

\begin{lemma}[Conditional Abel transfer]\label{lem:conditionalabel}
Let $\F_{-1}\subseteq\F_0\subseteq\cdots\subseteq\F_T$ be a filtration.
Let $S_{-1}=0$, let $S_t\ge0$ be
$\F_t$-measurable, let $B_t\ge0$ be $\F_{t-1}$-measurable for
$0\le t\le T$, and let $\alpha_t\in(0,1]$ be $\F_{t-1}$-measurable for
$0\le t\le T+1$. Assume that the displayed expectations below are finite. Set
$A_t\coloneqq\alpha_t^{-1}$. Suppose $\alpha_0=1$,
\begin{equation*}
  \E_{t-1}[S_t]\le(1-\alpha_t)S_{t-1}+B_t,
  \qquad
  0\le\alpha_{t+1}^{-1}-\alpha_t^{-1}\le\frac23 .
\end{equation*}
Then
\begin{equation}\label{eq:conditionalabel}
  \E\Big[\sum_{t=0}^T S_t\Big]
  \;\le\;
  3\,\E\Big[\sum_{t=0}^T\frac{B_t}{\alpha_t}\Big].
\end{equation}
\end{lemma}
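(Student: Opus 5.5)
The plan is a weighted telescoping (discrete Abel summation) argument: multiply the conditional recursion by the $\F_{t-1}$-measurable weight $A_t=\alpha_t^{-1}$, take expectations, and use the increment bound $A_t-A_{t-1}\le\frac23$. This turns the contraction $(1-\alpha_t)$ into a loss of at least $\frac13 S_{t-1}$ per step. The structure mirrors how \cref{lem:alphaincrement} will be used, and the hypothesis on reciprocal increments is exactly that lemma's conclusion.

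\emph{Step 1 (weighting).} For $0\le t\le T$, the quantities $A_t$ and $S_{t-1}$ are $\F_{t-1}$-measurable and $A_t\ge1>0$. Multiplying the hypothesis $\E_{t-1}[S_t]\le(1-\alpha_t)S_{t-1}+B_t$ by $A_t$ and pulling $A_t$ inside the conditional expectation gives
\begin{equation*}
  \E_{t-1}[A_tS_t]\;\le\;(A_t-1)\,S_{t-1}+A_tB_t .
\end{equation*}
At $t=0$ we have $S_{-1}=0$ and $A_0=1$, so this reads $\E_{-1}[S_0]\le B_0$. Taking full expectations uses the assumed finiteness; I would also note that $A_tS_t\ge0$, so the tower property applies without integrability subtleties beyond the stated finiteness.

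\emph{Step 2 (absorbing the weight increment).} For $t\ge1$, the increment hypothesis gives $A_t-1\le A_{t-1}+\frac23-1=A_{t-1}-\frac13$. Since $S_{t-1}\ge0$,
\begin{equation*}
  \E[A_tS_t]\;\le\;\E[A_{t-1}S_{t-1}]-\tfrac13\,\E[S_{t-1}]+\E[A_tB_t].
\end{equation*}

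\emph{Step 3 (telescoping).} Summing over $t=1,\dots,T$ and adding the $t=0$ inequality $\E[A_0S_0]\le\E[B_0]$ gives
\begin{equation*}
  \E[A_TS_T]+\tfrac13\sum_{t=0}^{T-1}\E[S_t]\;\le\;\sum_{t=0}^{T}\E[A_tB_t].
\end{equation*}
Cancelling $\E[A_{t-1}S_{t-1}]$ on both sides is legitimate because these expectations are finite by assumption. Since $A_T\ge1$, we have $\E[A_TS_T]\ge\E[S_T]\ge\frac13\E[S_T]$. Hence $\frac13\E\sum_{t=0}^T S_t\le\E\sum_{t=0}^T B_t/\alpha_t$, which is \cref{eq:conditionalabel}.

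The only delicate point is Step 1. The weight must be $\F_{t-1}$-measurable so that it commutes with $\E_{t-1}$; this is why the lemma requires $\alpha_t$, not merely $\alpha_{t+1}$, to be predictable. The subtraction in the telescoping also requires the finiteness assumption. Everything else is elementary algebra.
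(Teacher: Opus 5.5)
Your proof is correct and takes essentially the same route as the paper: you multiply the recursion by the predictable weight $A_t=\alpha_t^{-1}$, pass to full expectations by the tower property, and sum by parts using $A_t-A_{t-1}\le\frac23$ to extract the factor $3$; your direct telescoping of $\E[A_tS_t]$ is the paper's Abel summation of $(A_t-1)(S_{t-1}-S_t)$ in different bookkeeping, and it avoids $A_{T+1}$. One small precision: finiteness of $\E[A_tS_t]$ is not literally among the assumed expectations, but it follows because the increment bound and $A_0=1$ give $A_t\le 1+\frac{2t}{3}$ deterministically, so $\E[A_tS_t]\le(1+\frac{2T}{3})\,\E[S_t]<\infty$.
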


\begin{proof}
\begin{equation*}
\begin{aligned}
\E_{t-1}[S_t]
\;\le\;&(1-\alpha_t)S_{t-1}+B_t\\
\Longleftrightarrow{}&
\E_{t-1}[S_t]
\le(A_t-1)\big(S_{t-1}-\E_{t-1}[S_t]\big)+A_tB_t\\
\overset{\stepmark{1}}{\Longrightarrow}{}&
\E[S_t]
\le\E\big[(A_t-1)(S_{t-1}-S_t)+A_tB_t\big].
\end{aligned}
\end{equation*}
\begin{equation*}
\begin{aligned}
\E\Big[\sum_{t=0}^T S_t\Big]
\;\le\;&\E\Big[
  \sum_{t=0}^T(A_t-1)(S_{t-1}-S_t)
  +\sum_{t=0}^TA_tB_t\Big]\\
\ceq{2}{}&\E\Big[
  (A_0-1)S_{-1}+(1-A_{T+1})S_T
  +\sum_{t=0}^T(A_{t+1}-A_t)S_t
  +\sum_{t=0}^TA_tB_t\Big]\\
\le{}&\frac23\E\Big[\sum_{t=0}^T S_t\Big]
  +\E\Big[\sum_{t=0}^T\frac{B_t}{\alpha_t}\Big]\\
\Longrightarrow\quad
\E\Big[\sum_{t=0}^T S_t\Big]
\;\le\;&3\E\Big[\sum_{t=0}^T\frac{B_t}{\alpha_t}\Big].
\end{aligned}
\end{equation*}
where \stepmark{1} uses the $\F_{t-1}$-measurability of $A_t$ and $B_t$
and the tower property. Step \stepmark{2} is Abel summation, with $A_0=1$, $S_{-1}=0$,
$1-A_{T+1}\le0$, and $0\le A_{t+1}-A_t\le2/3$.
\end{proof}

\subsection{Proof of \texorpdfstring{\cref{cor:inst}}{the instantiation corollary}}

\begin{proof}[Proof of \cref{cor:inst}]
Both $x_t$ and $x_{t-1}$ are $\F_{t-1}$-measurable, so
$\varphi^{(1)}(z)\coloneqq\nabla f(x_t;z)$ and
$\varphi^{(2)}(z)\coloneqq\nabla f(x_t;z)-\nabla f(x_{t-1};z)$ are
$\F_{t-1}$-measurable maps. By linearity of \cref{eq:mlmc} in $\varphi$
(same $J_t$, same samples),
\begin{equation*}
  \ghat_t(x_t)=\mlmc{t}[\varphi^{(1)}],
  \qquad
  \widehat\Delta_t=\ghat_t(x_t)-\ghat_t(x_{t-1})=\mlmc{t}[\varphi^{(2)}] .
\end{equation*}
For $\varphi^{(1)}$, $\bar\varphi^{(1)}=\nabla f(x_t)$ and
$\widetilde H\le\Gs$ by \cref{asm:noise}. For $\varphi^{(2)}$,
$\bar\varphi^{(2)}=\nabla f(x_t)-\nabla f(x_{t-1})$, and pointwise in $z$,
\begin{equation*}
\paperfitdisplay{\begin{aligned}
\norm{\varphi^{(2)}(z)-\bar\varphi^{(2)}}
\;\le\;& \norm{\nabla f(x_t;z)-\nabla f(x_{t-1};z)}
        +\norm{\nabla f(x_t)-\nabla f(x_{t-1})}
\;\le\;2L\norm{x_t-x_{t-1}},
\end{aligned}}
\end{equation*}
by individual smoothness (\cref{asm:smooth}, which also implies smoothness
of $f$). Alternatively,
\begin{equation*}
\begin{aligned}
\norm{\varphi^{(2)}(z)-\bar\varphi^{(2)}}
\;\le\;& \norm{\nabla f(x_t;z)-\nabla f(x_t)}
        +\norm{\nabla f(x_{t-1};z)-\nabla f(x_{t-1})}
\;\le\;2\Gs .
\end{aligned}
\end{equation*}
All four bounds of the corollary now follow from
\cref{lem:mlmc}(b)--(c) with the respective $\widetilde H$.
\end{proof}

\subsection{Proof of \texorpdfstring{\cref{thm:reduction}}{the reduction theorem}}

\begin{proof}[Proof of \cref{thm:reduction}]
\emph{Initialization.} At $t=0$, $\alpha_0=1$ and $g_{-1}=0$ give
$g_0^{\mathrm{pre}}=\ghat_0(x_0)$, hence $s_0^{\mathrm{pre}}=n_0$. By
\cref{lem:clipsafe}(c) and \cref{cor:inst}, the following holds almost surely:
\begin{equation*}
\begin{aligned}
\E_{-1}\norm{s_0}^2
\;\le\;&\E_{-1}\norm{n_0}^2
\;\le\;\Lam\Gs^2.
\end{aligned}
\end{equation*}

\emph{Decomposition.} Fix $t\ge1$. From the definition of $\gpre$
(in which $g_{t-1}$ is the \emph{clipped} iterate of the previous step, so
that $s_{t-1}=g_{t-1}-\nabla f(x_{t-1})$ is exactly the clipped error and
is $\F_{t-1}$-measurable),
\begin{equation*}
\begin{aligned}
\spre
\;=\;& (1-\alpha_t)\big(g_{t-1}-\ghat_t(x_{t-1})\big)
      +\ghat_t(x_t)-\nabla f(x_t)\\
\;=\;& (1-\alpha_t)\,s_{t-1}
      +(1-\alpha_t)\,e_t+\alpha_t\,n_t
\;\eqqcolon\;(1-\alpha_t)\,s_{t-1}+\xi_t ,
\end{aligned}
\end{equation*}
by adding and subtracting
$(1-\alpha_t)\nabla f(x_{t-1})$ and $\alpha_t\nabla f(x_t)$.
Write $b_t\coloneqq\E_{t-1}[\xi_t]$.

\emph{Conditional expansion.}
Using \cref{lem:clipsafe}(c) and the $\F_{t-1}$-measurability of
$\alpha_t$ and $s_{t-1}$,
\begin{equation*}
\begin{aligned}
\E_{t-1}\norm{s_t}^2
\;\le\;& \E_{t-1}\norm{\spre}^2\\
\;=\;& (1-\alpha_t)^2\norm{s_{t-1}}^2
      +2(1-\alpha_t)\inner{s_{t-1}}{b_t}
      +\E_{t-1}\norm{\xi_t}^2\\
\cle{1}\;& (1-\alpha_t)^2\norm{s_{t-1}}^2
      +(1-\alpha_t)\Big[\tfrac{\alpha_t}{2}\norm{s_{t-1}}^2
        +\tfrac{2}{\alpha_t}\norm{b_t}^2\Big]
      +\E_{t-1}\norm{\xi_t}^2\\
\cle{2}\;& (1-\alpha_t)\norm{s_{t-1}}^2
      +\tfrac{2}{\alpha_t}\norm{b_t}^2
      +\E_{t-1}\norm{\xi_t}^2 ,
\end{aligned}
\end{equation*}
where
\stepmark{1} is Young's inequality
$2\inner{s_{t-1}}{b_t}\le\tfrac{\alpha_t}{2}\norm{s_{t-1}}^2
+\tfrac{2}{\alpha_t}\norm{b_t}^2$ applied inside the factor
$(1-\alpha_t)$, and
\stepmark{2} uses
$(1-\alpha_t)\big(1-\tfrac{\alpha_t}{2}\big)\le1-\alpha_t$ and
$1-\alpha_t\le1$, valid for all $\alpha_t\in(0,1]$.
No independence between the burst and $s_{t-1}$ is used: the cross term is
controlled by Cauchy--Schwarz--Young pointwise, and the price
$2/\alpha_t$ is paid to the contraction.

\emph{Variance.} By \cref{cor:inst} and $(a+b)^2\le2a^2+2b^2$,
\begin{equation*}
\begin{aligned}
\E_{t-1}\norm{\xi_t}^2
\;\le\;& 2(1-\alpha_t)^2\,\E_{t-1}\norm{e_t}^2
        +2\alpha_t^2\,\E_{t-1}\norm{n_t}^2
\;\le\; 8\Lam L^2\norm{x_t-x_{t-1}}^2+2\alpha_t^2\Lam\Gs^2 .
\end{aligned}
\end{equation*}

\emph{Bias.} Again by \cref{cor:inst} and $(a+b)^2\le2a^2+2b^2$,
\begin{equation*}
\begin{aligned}
\frac{2}{\alpha_t}\norm{b_t}^2
\;\le\;& \frac{2}{\alpha_t}\Big[
   2\Big(\frac{16L\tmix\norm{x_t-x_{t-1}}}{T}\Big)^{2}
  +2\alpha_t^2\Big(\frac{8\Gs\tmix}{T}\Big)^{2}\Big]\\
\;=\;& \frac{4}{\alpha_t}\Big(\frac{16L\tmix\norm{x_t-x_{t-1}}}{T}\Big)^{2}
  +4\alpha_t\Big(\frac{8\Gs\tmix}{T}\Big)^{2}.
\end{aligned}
\end{equation*}
Define $r_t\coloneqq128\tmix^2/(\alpha_t\Lam T^2)$. Each component equals
$r_t$ times its variance sibling, including the zero-noise and
zero-displacement cases:
\begin{equation*}
\begin{aligned}
4\alpha_t^{-1}\Big(\frac{16L\tmix\norm{x_t-x_{t-1}}}{T}\Big)^2
\;=\;&r_t\,8\Lam L^2\norm{x_t-x_{t-1}}^2,\\
4\alpha_t\Big(\frac{8\Gs\tmix}{T}\Big)^2
\;=\;&r_t\,2\alpha_t^2\Lam\Gs^2 .
\end{aligned}
\end{equation*}
The common multiplier is controlled by
\begin{equation*}
\begin{aligned}
\frac{128\,\tmix^2}{\alpha_t\Lam T^2}
\;\cle{1}\;& \frac{128\,\tmix^2(T+1)^{2/3}}{\Lam T^2}
\;\cle{2}\; \frac{128\cdot2^{2/3}\,\tmix^2}{\Lam\,T^{4/3}}\\
\;\cle{3}\;& \frac{128\cdot2^{2/3}}{306}\cdot\frac{\tmix}{T^{4/3}}
\;\le\; 128\,\tmix\,T^{-4/3}
\;\le\;1,
\end{aligned}
\end{equation*}
where
\stepmark{1} is \cref{lem:alphafloor} with $t\le T$.
Step \stepmark{2} uses $(T+1)^{2/3}\le(2T)^{2/3}$.
Step \stepmark{3} uses $\Lam\ge306\tmix$,
and the final step holds precisely when $T^{4/3}\ge128\tmix$, i.e.\
$T\ge(128\tmix)^{3/4}$, which is the burn-in $T\ge T_0$.
(The margin at step \stepmark{3} is in fact a further factor
$128\cdot2^{2/3}/306<0.67$. We keep the conservative threshold.)

\emph{Conclusion.} Each bias component is at most its variance sibling, so
\begin{equation*}
\begin{aligned}
\E_{t-1}\norm{s_t}^2
\;\le\;& (1-\alpha_t)\norm{s_{t-1}}^2
+2\big(8\Lam L^2\norm{x_t-x_{t-1}}^2+2\alpha_t^2\Lam\Gs^2\big)\\
\;=\;& (1-\alpha_t)\norm{s_{t-1}}^2
+2\alpha_t^2\,\sM^2+8\,\LM^2\norm{x_t-x_{t-1}}^2 ,
\end{aligned}
\end{equation*}
with $\sM^2=2\Lam\Gs^2$ and $\LM^2=2\Lam L^2$: the factor $2$ in each
effective constant is exactly the absorption margin for the conditional
bias. Because the recursion is stated conditionally, with all coefficients
$\F_{t-1}$-measurable, and links the clipped errors to themselves (via
\cref{lem:clipsafe}(c) at each step), it can be iterated by the tower
property with no auxiliary unclipped sequence.
\end{proof}

\begin{remark}[Why the momentum recursion, and not an anchored one]
The absorption above leans on the contraction $(1-\alpha_t)$: the bias
enters once per step and is damped geometrically, so paying $2/\alpha_t$
in the Young step costs only the relative factor
$128\tmix T^{-4/3}$. An anchored (SPIDER-type) recursion accumulates the
per-step conditional bias additively over an epoch with no damping. There
is no analogous absorption, and the corresponding composition fails.
This is the structural reason the momentum variant \basevar{} is the right
base for the Markovian extension.
\end{remark}
\section{Proofs for \texorpdfstring{\cref{sec:clipping,sec:res-pathwise}}{the Clipping Sections}: Pathwise Bounds and Transferred Constant Propagation}
\label[appendix]{app:pathwise}

\subsection{Proof of \texorpdfstring{\cref{lem:clipsafe}}{the pathwise-safety lemma}}

\begin{proof}[Proof of \cref{lem:clipsafe}]
\emph{(a).} $\norm{g_t}\le\Ghat$ is the definition of the projection onto
the ball of radius $\Ghat$. Then, by \cref{asm:bounded},
\begin{equation*}
\begin{aligned}
\norm{s_t}
\;\le\;& \norm{g_t}+\norm{\nabla f(x_t)}
\;\le\;\Ghat+G
\;\le\;2\Ghat .
\end{aligned}
\end{equation*}

\emph{(c).} Since $\norm{\nabla f(x_t)}\le G\le\Ghat$, the true gradient is
a fixed point of the projection, and projections onto convex sets are
non-expansive:
\begin{equation*}
\begin{aligned}
\norm{s_t}
\;=\;& \big\lVert\clip(\gpre)-\clip\big(\nabla f(x_t)\big)\big\rVert
\;\le\;\norm{\gpre-\nabla f(x_t)}
\;=\;\norm{\spre}
\qquad\text{pointwise}.
\end{aligned}
\end{equation*}

\emph{(b).} By \cref{asm:problem}, $h$ is $G$-Lipschitz on $\X$. Three
cases.
If $v_t=x_t$, the corner convention gives $\etabar_t=0$, so $\delta_t=0$.
If $\etabar_t<1$, the stepsize attains the unconstrained minimizer, and
\begin{equation*}
\begin{aligned}
\delta_t
\;=\;& L_t\,\etabar_t\,\norm{v_t-x_t}
\;=\;\frac{h(x_t)-h(v_t)-\inner{g_t}{v_t-x_t}}{\norm{v_t-x_t}}\\
\;\le\;& \frac{G\norm{x_t-v_t}+\norm{g_t}\,\norm{v_t-x_t}}{\norm{v_t-x_t}}
\;=\;G+\norm{g_t}.
\end{aligned}
\end{equation*}
If $\etabar_t=1$, the ratio in the stepsize rule is at least $1$, so
\begin{equation*}
\begin{aligned}
L_t\norm{v_t-x_t}^2
\;\le\;& h(x_t)-h(v_t)-\inner{g_t}{v_t-x_t}
\;\le\;\big(G+\norm{g_t}\big)\norm{v_t-x_t}\\
\Longrightarrow\quad
\delta_t\;=\;& L_t\norm{v_t-x_t}\;\le\;G+\norm{g_t}.
\end{aligned}
\end{equation*}
In all cases $\delta_t\le G+\norm{g_t}\le G+\Ghat$ by part (a). We record
the intermediate form for later use:
\begin{equation}\label{eq:lemmaP}
  \delta_t\;\le\;G+\norm{g_t}\;\le\;2G+\norm{s_t}
  \qquad\text{almost surely.}
\end{equation}
\end{proof}

Inequality \cref{eq:lemmaP} explains why a pathwise bound on $\norm{s_t}$
is the essential object: the displacement bound the downstream analysis
invokes is, in substance, an error bound. It also explains why some
algorithmic device is necessary. \Cref{prop:esssup}, proved next, shows
that the raw estimator has no almost-sure bound on $\norm{s_0}$ at any
scale $o(T)\Gs$ exists.

\subsection{Proof of \texorpdfstring{\cref{prop:esssup}}{the heavy-tail proposition}}

\begin{proof}[Proof of \cref{prop:esssup}]
Let $\Z=\{-1,+1\}$ with the symmetric kernel $P(z,-z)=p$ for a fixed
$p\in(0,\tfrac12)$, whose stationary law $\pi$ is uniform. Fix a unit
vector $u\in\R^n$ and a smooth $f_0$ with bounded gradient on $\X$, and set
\begin{equation*}
  f(x;z)\;\coloneqq\;f_0(x)+\Gs\,z\,\inner{u}{x},
\end{equation*}
so that $\nabla f(x;z)=\nabla f_0(x)+\Gs z\,u$, $f=\E_\pi f(\cdot;z)=f_0$
(as $\E_\pi[z]=0$), \cref{asm:smooth} holds with the $L$ of $f_0$, and the
centered noise is exactly $\Gs$. At $t=0$, $\alpha_0=1$ and $g_{-1}=0$ give
$s_0^{\mathrm{pre}}=n_0=\mlmc{0}[\varphi]-\bar\varphi$ with
$\varphi(z)=\nabla f(x_0;z)$.

Let $n\coloneqq2^{\jmax}\ge T/2$ and consider the event
\begin{equation*}
  E\;\coloneqq\;\big\{J_0=\jmax\big\}\;\cap\;
  \big\{z_0^{(1)}=\dots=z_0^{(n/2)}=+1,\;
        z_0^{(n/2+1)}=\dots=z_0^{(n)}=-1\big\}.
\end{equation*}
$E$ has positive probability: $\Prob[J_0=\jmax]=2^{-\jmax}>0$, and every
finite sign pattern has positive probability under $P$ since
$p\in(0,\tfrac12)$. On $E$,
\begin{equation*}
\begin{aligned}
\hat\mu^{\jmax}-\bar\varphi
\;=\;&\Gs u\cdot\frac{1}{n}\Big(\tfrac n2-\tfrac n2\Big)\;=\;0,
\qquad
\hat\mu^{\jmax-1}-\bar\varphi\;=\;\Gs u,
\qquad
\hat\mu^{0}-\bar\varphi\;=\;\Gs u ,
\end{aligned}
\end{equation*}
so that
\begin{equation*}
\begin{aligned}
n_0
\;=\;& (\hat\mu^{0}-\bar\varphi)
   +2^{\jmax}\big(\hat\mu^{\jmax}-\hat\mu^{\jmax-1}\big)
\;=\;\Gs u+n\,(0-\Gs u)
\;=\;-(n-1)\,\Gs\,u ,
\end{aligned}
\end{equation*}
whence $\norm{s_0^{\mathrm{pre}}}=(n-1)\Gs\ge(T/2-1)\Gs$ on $E$.
\end{proof}

The mechanism is the MLMC multiplier $2^{J}$ applied to a level difference
that fails to be small when the burst straddles a long excursion of the
chain. The event is rare, which is why the \emph{second moment} remains
$O(\Lam\Gs^2)$. Essential suprema do not depend on rarity.

\subsection{Transfer of the constant propagation in
\texorpdfstring{\citep{yuan2026alfcg}}{the base analysis}}
\label[appendix]{app:transfer}

We now make precise the claim that the downstream analysis of
\citep[Appendix~H]{yuan2026alfcg} applies to \algname{} under
re-parameterisation. Throughout, ``the base coefficient system'' refers to
(notation of \citep{yuan2026alfcg}, with $C\coloneqq2G$ the Lipschitz
constant of $F$ on $\X$, $A\coloneqq LD+C$, and
$\bar F\coloneqq\sup_{x\in\X}(F(x)-F^\ast)\le2GD$):
\begin{equation}\label{eq:chain}
\begin{gathered}
  \dbar=8\big(A+\sbar+\rho D\big),\quad
  \kappa=\Big(2+\tfrac{2\dbar^2}{\rho}\Big)^{1/2},\quad
  \Bdot=18\big(1+\beta+\dbar^2\big),\quad
  \Bddot=\tfrac{40L^2\kappa^2}{\rho}\ln\big(1+\dbar^2T\big),\\
  Y_1=12D^2\rho^2+144\bar F^2\rho^2,\qquad
  Z_1=27P_0^3,\quad P_0=1+4\bar F\rho+4D^2\rho^2+\tfrac{4L\kappa}{\rho},\\
  Y_2=96\big(\Bdot+\Bddot\big),\qquad
  Z_2=(48\Bddot)^{3/2}+96\Bdot\Big(\sigma^2+\tfrac{\sigma^2}{\beta}\Big),\\
  Y_0=\max(2Y_1,Y_2),\qquad Z_0=\max(2Z_1,Z_2),
\end{gathered}
\end{equation}
with $\vartheta=\beta^{1/6}/(1+\beta^{1/6})$, culminating in
\citep[Lemma~27 and Appendix~H.5]{yuan2026alfcg}:
\begin{equation}\label{eq:master}
  \E\Big[\tfrac{1}{T+1}\textstyle\sum_{t=0}^{T}\gap(x_t)\Big]
  \;\le\;
  \underbrace{\frac{2\,Y_0\max\big(\beta^{1/3},\,\vartheta^2,\,
     \sigma^2\beta^{-2/3}\big)}{\rho\,\vartheta}}_{=:\ \Rcal(\rho,\beta)}
  \,(T+1)^{-1/3}
  \;+\;
  \underbrace{\frac{2\,Z_0}{\rho}}_{=:\ \Scal(\rho,\beta)}\,(T+1)^{-1/2}.
\end{equation}

\begin{proposition}[Transfer]\label{prop:transfer}
Under \cref{asm:problem,asm:smooth,asm:bounded,asm:noise,asm:markov}, for
\algname{} with $\rho>0$, $\beta>0$, and $T\ge T_0$, the bound
\cref{eq:master} holds with the
substitutions
\begin{equation}\label{eq:threesubs}
  \sigma^2\mapsto\sM^2,
  \qquad
  L^2\mapsto\LM^2\ \text{(inside $\Bddot$ only)},
  \qquad
  \sbar\mapsto2\Ghat .
\end{equation}
\end{proposition}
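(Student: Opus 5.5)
The proof re-runs the chain of \citep[Appendix~H]{yuan2026alfcg} step by step. For each lemma in that chain I would record which hypothesis it consumes, and check that \algname{} supplies that hypothesis once the substitutions \cref{eq:threesubs} are made. The lemmas fall into three kinds.

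\emph{(i) Pathwise algebra.} This covers the descent inequality \citep[Lemma~6]{yuan2026alfcg}, the short-step optimality relations, and the properties of $\alpha_t$ and $L_t$ (monotonicity, \cref{lem:alphafloor,lem:alphaincrement}). These hold for any vector $g_t$ plugged into the LMO and stepsize rule. They depend on the past only through $\F_{t-1}$-measurable quantities, and \cref{alg:main} is ordered so that this measurability holds. They therefore transfer verbatim.

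\emph{(ii) Pathwise bounds.} These are the displacement bound $\delta_t\le\dbar$, the stepsize-ratio constant $\kappa$, and the logarithmic truncation $\ln(1+\dbar^2T)$. In the base analysis they are derived from $\norm{s_t}\le\sbar$ together with $A=LD+C$. I would substitute \cref{lem:clipsafe}(a),(b), which give $\norm{s_t}\le2\Ghat$ and $\delta_t\le G+\Ghat$ almost surely. Since $G+\Ghat\le 8(A+2\Ghat+\rho D)$, every pathwise use of $\dbar$ stays valid with $\sbar\mapsto2\Ghat$. No $\Lam$ enters here, which matches \cref{tab:watershed}.

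\emph{(iii) The error recursion.} This is the only place the oracle's distribution enters. \citep[Lemma~26(a)]{yuan2026alfcg} is replaced by \cref{thm:reduction}. That theorem has the same $\F_{t-1}$-measurable coefficients, with $\sigma^2\to\sM^2$ and $8L^2\norm{x_t-x_{t-1}}^2\to8\LM^2\norm{x_t-x_{t-1}}^2$, valid for $T\ge T_0$. Its initialization $\E_{-1}\norm{s_0}^2\le\Lam\Gs^2\le\sM^2$ matches the base initial condition.

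The key step is the weighted cumulative-error bound, namely Lemma~26(b)/27 of the base, which sums the recursion against weights $1/\alpha_t$. In the i.i.d.\ proof this summation may use unconditional independence. I would redo it with \cref{lem:conditionalabel}, taking $S_t=\norm{s_t}^2$ (possibly multiplied by an $\F_{t-1}$-measurable weight) and $B_t=2\alpha_t^2\sM^2+8\LM^2\norm{x_t-x_{t-1}}^2$. This gives $\E\sum_t\norm{s_t}^2\le3\E\sum_t(2\alpha_t\sM^2+8\LM^2\norm{x_t-x_{t-1}}^2/\alpha_t)$. Next I would bound $\norm{x_t-x_{t-1}}^2=\delta_{t-1}^2/L_{t-1}^2$ exactly as the base does, using the $L_t$ growth and the pathwise $\dbar$. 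This produces the $\ln(1+\dbar^2T)$ factor, with $L^2$ carried into $\Bddot$ as $\LM^2$. The $L$ appearing in $A$, in $P_0$, and in the descent inequality comes from true smoothness of $f$, not from the recursion, so it is not inflated. I would verify this term by term by tracing each occurrence of $L$ back to its source lemma.

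With all base hypotheses verified, the Appendix~H.5 optimization over the $(T+1)^{-1/3}$ and $(T+1)^{-1/2}$ branches is purely algebraic in the constants. It therefore yields \cref{eq:master} with the substituted constants, which completes the transfer.

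I expect the main obstacle to be showing that no step in the base chain silently uses a pathwise consequence of unbiasedness or an expectation-level bound on $\sup_t\norm{s_t}$, beyond what \cref{lem:clipsafe} supplies. My first move would be to list every use of $\sbar$ and of $\sigma$ in Appendix~H and classify each as pathwise or conditional-moment. Pathwise uses would be checked against \cref{lem:clipsafe}, and conditional-moment uses against \cref{thm:reduction} and \cref{lem:conditionalabel}.
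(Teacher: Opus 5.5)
Your proposal follows the paper's proof essentially step for step. It uses the same three interfaces: the descent inequality as pathwise algebra valid for arbitrary $g_t$; the clipping-supplied bound $\delta_t\le G+\Ghat\le 8(A+2\Ghat+\rho D)$; and replacement of \citep[Lemma~26(a)]{yuan2026alfcg} by \cref{thm:reduction}, summed via \cref{lem:conditionalabel} and \cref{lem:alphaincrement} with $B_0=2\sM^2\ge\E_{-1}\norm{s_0}^2$. It also makes the same observation that the $L$ in $\dbar$ and $P_0$ comes from the descent inequality and is not inflated.

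The one point the paper makes explicit that you leave implicit is the case split of \citep[Appendix~H.4]{yuan2026alfcg}. Because the cumulative-error bound now holds only in expectation, that split is carried out on $X=\E[\Gamma]$ and $S=\E\sum_t\norm{s_t}^2$, with Jensen applied to $\E[(\Gamma+T\beta)^{1/3}]$, rather than on random events. Your planned pathwise-versus-moment audit would catch this, but it is not "purely algebraic in the constants" until that reduction is made.
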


\begin{proof}
The analysis of \citep[Appendix~H]{yuan2026alfcg} consumes the gradient
estimator only through three interfaces, which we verify in turn.

\emph{(i) The descent inequality is pathwise algebra, valid for arbitrary
$g_t$.} \citep[Lemma~6]{yuan2026alfcg} states that for all
$\gamma_t\in(0,1]$,
\begin{equation*}
  \gamma_t\gap(x_t)+F(x_{t+1})-F(x_t)+\frac{\delta_t^2}{4L_t}
  \;\le\;
  \frac{\delta_t^2L}{2L_t^2}+\frac{2\norm{s_t}^2}{L_t}
  +\gamma_t^2L_tD^2
\end{equation*}
(here $\gamma_t$ is a free comparison parameter of the analysis, \emph{not}
the algorithm's step $\etabar_t$). Its proof uses only the $L$-smoothness
of $f$, the convexity of $h$, the optimality of $v_t$ in the composite LMO
\emph{for the same $g_t$ the algorithm used}, and pointwise
Cauchy--Schwarz--Young steps. No unbiasedness, distributional property, or
momentum structure of $g_t$ enters. It therefore holds verbatim for the
clipped $g_t$. (Under the corner convention, if $v_t=x_t$ then
$\delta_t=0$, $x_{t+1}=x_t$, and by LMO optimality at $v=x_t$ the composite
gap evaluated at $g_t$ is zero, so the inequality degenerates to a true
statement.)

\emph{(ii) The momentum identity is consumed at exactly one point.}
In \citep[Appendix~H]{yuan2026alfcg} the update rule for $g_t$ is invoked
once to derive the error recursion \citep[Lemma~26(a)]{yuan2026alfcg}.
Every subsequent step uses the estimator only through $s_t$, $\delta_t$,
and $\alpha_t$. We replace that single derivation by
\cref{thm:reduction}, which is stated conditionally with
$\F_{t-1}$-measurable coefficients and self-links on the clipped sequence
via \cref{lem:clipsafe}(c).  Set $S_t\coloneqq\norm{s_t}^2$,
$S_{-1}\coloneqq0$, and
\begin{equation*}
  \widetilde B_0\coloneqq2\sM^2,
  \qquad
  \widetilde B_t\coloneqq
  2\alpha_t^2\sM^2+8\LM^2\norm{x_t-x_{t-1}}^2
  \quad(1\le t\le T).
\end{equation*}
The initialization in \cref{thm:reduction} gives
$\E_{-1}[S_0]\le\Lam\Gs^2=\sM^2/2\le\widetilde B_0$ almost surely, while
its conditional recursion gives
$\E_{t-1}[S_t]\le(1-\alpha_t)S_{t-1}+\widetilde B_t$ for $t\ge1$.
Therefore \cref{lem:conditionalabel}, with the reciprocal increment from
\cref{lem:alphaincrement}, yields the exact cumulative interface
\begin{equation}\label{eq:conditional-cumulative}
\begin{aligned}
\E\Big[\sum_{t=0}^T\norm{s_t}^2\Big]
\;\le\;&3\E\Big[\sum_{t=0}^T
  \frac{\widetilde B_t}{\alpha_t}\Big]\\
\;=\;&3\E\Big[2\sM^2+
  \sum_{t=1}^T\Big(
    2\alpha_t\sM^2
    +\frac{8\LM^2}{\alpha_t}\norm{x_t-x_{t-1}}^2
  \Big)\Big].
\end{aligned}
\end{equation}
This is precisely the input used after \citep[Eq.~(43)]{yuan2026alfcg}.
Thus, the random predictable weights are transferred by a proved
conditional Abel argument, rather than by treating them as deterministic.

\emph{(iii) The pathwise displacement bound is available by construction.}
The remaining probabilistic ingredient of the analysis is the displacement
bound $\delta_t\le\dbar$, which is invoked along each sample path: in the
stepsize-ratio bound \citep[Lemma~25(c)]{yuan2026alfcg} and in the
logarithmic truncation inside the proof of
\citep[Lemma~26(b)]{yuan2026alfcg}
(where $\ln(1+\sum_{t'\le T-1}\delta_{t'}^2)\le\ln(1+\dbar^2T)$ is applied
pathwise). By \cref{lem:clipsafe}(b),
$\delta_t\le G+\Ghat\le8(A+2\Ghat+\rho D)=\dbar\big|_{\sbar=2\Ghat}$
almost surely, so every such invocation is valid with
$\sbar=2\Ghat$. This bound holds \emph{by construction}, with no
uniform-error hypothesis. (Similarly $\norm{s_t}\le2\Ghat$ a.s.\
strengthens the second-moment role of $\sbar$ wherever it is used.)

Since the chain \cref{eq:chain} is monotone non-decreasing in $\dbar$ and
in $\sigma^2$, and the only occurrences of the recursion constants are the
$\sigma^2$ inherited from \citep[Lemma~26(a)]{yuan2026alfcg} (our $\sM^2$)
and the $L^2$ of $\Bddot$ (traceable to the displacement-variance
coefficient of the recursion, our $\LM^2$), the entire chain and the
master bound \cref{eq:master} hold under \cref{eq:threesubs}. The $L$
appearing in $\dbar$ and in $P_0$ originates in the descent inequality of
step (i), not in the recursion, and is \emph{not} inflated. Tracing this
distinction (\cref{tab:watershed}) determines the resulting $\Lam$-powers.

For completeness, we also make the expectation-level case split in
\citep[Appendix~H.4]{yuan2026alfcg} explicit.  This avoids interpreting a
case distinction made after taking expectations as a pathwise statement.
Set
\begin{equation*}
\begin{gathered}
  X\coloneqq\E[\Gamma],\qquad
  S\coloneqq\E\Big[\sum_{t=0}^{T}\norm{s_t}^2\Big],\qquad
  M\coloneqq\max\big(\beta^{1/3},\vartheta^2,
     \sigma^2\beta^{-2/3}\big),\\
  A_0\coloneqq48\Bdot\sigma^2\Big(1+\frac1\beta\Big)
    +48(\Bdot+\Bddot)M(T+1)^{1/3},
  \qquad b\coloneqq48\Bddot .
\end{gathered}
\end{equation*}
The transferred cumulative-error calculation in
\citep[Appendix~H.3(b)]{yuan2026alfcg}, now justified by
\cref{eq:conditional-cumulative}, gives
\begin{equation*}
\begin{aligned}
  S
  \;\le\;&
  \Bdot\sigma^2\Big(1+\frac1\beta+\beta^{-2/3}T^{1/3}\Big)
  +\Bddot\,\E\big[(\Gamma+T\beta)^{1/3}\big]\\
  \;\le\;&
  \Bdot\sigma^2\Big(1+\frac1\beta\Big)
  +(\Bdot+\Bddot)M(T+1)^{1/3}
  +\Bddot X^{1/3},
\end{aligned}
\end{equation*}
where the second step uses Jensen's inequality and
$(a+b)^{1/3}\le a^{1/3}+b^{1/3}$. Together with the descent calculation
preceding \citep[Eq.~(47)]{yuan2026alfcg}, this yields
\begin{equation}\label{eq:expectation-cases}
\begin{aligned}
  48S &\le A_0+bX^{1/3},\\
  X &\le Z_1+24S+Y_1M(T+1)^{1/3}.
\end{aligned}
\end{equation}
If $24S\le X/2$, then
\begin{equation*}
\begin{aligned}
  \max(X,48S)
  \;\le\;&2Z_1+2Y_1M(T+1)^{1/3}.
\end{aligned}
\end{equation*}
If $24S>X/2$, then $X<48S$ and \cref{eq:expectation-cases} yields
\begin{equation*}
\begin{aligned}
  X
  \;<\;&A_0+bX^{1/3}
  \;\le\;A_0+\frac{X}{3}+\frac23b^{3/2}\\
  \Longrightarrow\quad
  X
  \;\le\;&2A_0+b^{3/2},\\
  48S
  \;\le\;&A_0+bX^{1/3}
  \;\le\;A_0+\frac{X}{3}+\frac23b^{3/2}
  \;\le\;2A_0+b^{3/2}.
\end{aligned}
\end{equation*}
Consequently, in both cases,
\begin{equation*}
\begin{aligned}
  \max(X,48S)
  \;\le\;&
  \max(2Z_1,Z_2)+\max(2Y_1,Y_2)M(T+1)^{1/3}
  \;=\;\Omega,
\end{aligned}
\end{equation*}
where the definitions of $Z_2$ and $Y_2$ in \cref{eq:chain} follow from
$2A_0+b^{3/2}$.  Thus the downstream use of both $\E[\Gamma]$ and the
cumulative estimator error is justified without a random-event case split.

Two elementary proof details, recorded for completeness. First, the
summation inside the logarithmic truncation of
\citep[Lemma~26(b)]{yuan2026alfcg} runs over $t'\le T-1$ (with
$\delta_{-1}=0$), so the truncation constant is $\ln(1+\dbar^2T)$ exactly
as stated. Second, in the case analysis of \citep[Appendix~H.4]{yuan2026alfcg}
we resolve the inequality $x\le A_0+bx^{1/3}$ by Young's inequality,
\begin{equation*}
\begin{aligned}
bx^{1/3}
\;\le\;& \tfrac{x}{3}+\tfrac{2}{3}b^{3/2}
\qquad\Longrightarrow\qquad
x\;\le\;\tfrac{3}{2}A_0+b^{3/2}\;\le\;2A_0+b^{3/2},
\end{aligned}
\end{equation*}
which reproduces the constants $Y_2$, $Z_2$ of \cref{eq:chain}.
\end{proof}

\begin{remark}[On the necessity of the pathwise interface]
The need for an almost-sure interface comes from the adaptive constant
propagation, not from Markovian sampling. The same interface is used in the
i.i.d.\ analysis. For the original estimator of
\citep{yuan2026alfcg}, establishing the bound remains the open challenge
stated in Remark~23. For our raw capped multilevel estimator,
\cref{prop:esssup} rules out a horizon-independent bound at the required
scale. Clipping instead enforces the interface for our algorithm by
construction. It does not settle the original unclipped question.
\end{remark}
\section{Proof for \texorpdfstring{\cref{sec:res-tuned}}{the Tuned-Complexity Section}}\label[appendix]{app:rates}

Throughout this appendix the substitutions \cref{eq:threesubs} are in
force, all constants refer to \cref{eq:chain} so instantiated, and
\begin{equation*}
  \rho^\star=\sqrt{\Lam},\qquad
  \beta^\star=\sM^2=2\Lam\Gs^2,\qquad
  \sbar_1\coloneqq2\Ghat,\qquad
  \Lcal=\max\{1,\ln(1+\dbarM^2T)\},
\end{equation*}
with the standing hypotheses of \cref{thm:tuned}:
$T\ge T_0$ and $0<\Gs\le\overline G_\sigma$, where
$\overline G_\sigma>0$ is the fixed problem-class envelope. Since
$\Lam\ge1$ and the chain
\cref{eq:chain} is monotone in
$\dbar$, and $2\Ghat\le\sqrt{\Lam}\,\sbar_1$ for $\Lam\ge1$, we may
conservatively carry out the constant propagation with
\begin{equation}\label{eq:dbarM}
  \dbarM\;=\;8\big(A+\sqrt{\Lam}\,\sbar_1+\rho^\star D\big),
\end{equation}
which also covers any intermediate constant choice between the
clipped scale $2\Ghat$ and the natural second-moment scale
$\Theta(\sqrt\Lam\,\Gs)$ of the raw error (\cref{prop:inflation}).
The definition of $\Lcal$ takes a maximum with $1$ so that multiplying
log-free terms by $\Lcal$ in Step~6 below is a valid relaxation.

\begin{lemma}[Rate coefficient at the tuned parameters]\label{lem:D}
Under the above,
\begin{equation*}
\begin{aligned}
\Rcal(\rho^\star,\beta^\star)
\;\le\;&
  C_{\Rcal}\,\Lcal\Big(
    \Gs^{1/3}\Lam^{2/3}+\Gs^{2/3}\Lam^{5/6}\Big),\\
C_{\Rcal}
\;\coloneqq\;&2\cdot2^{1/3}
  \Big[24D^2+288\bar F^2+1728c_1+7680c_2L^2\Big].
\end{aligned}
\end{equation*}
where
$c_1\coloneqq1+2\overline G_\sigma^2
  +192\big(A^2+\sbar_1^2+D^2\big)$ and
$c_2\coloneqq2+384\big(A^2+\sbar_1^2+D^2\big)$.
\end{lemma}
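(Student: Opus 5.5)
The plan is to evaluate the master coefficient $\Rcal(\rho,\beta)=2Y_0\max(\beta^{1/3},\vartheta^2,\sM^2\beta^{-2/3})/(\rho\vartheta)$ from \cref{eq:master}, under the substitutions \cref{eq:threesubs} given by \cref{prop:transfer}, at $(\rho^\star,\beta^\star)$. I would split it into a $\beta$-dependent factor and the factor $Y_0/\rho$, and bound each separately.

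\emph{Step 1: the $\beta$-factor, which is exact.} At $\beta^\star=\sM^2$ the first and third entries of the max coincide and equal $\beta^{1/3}$. Since $\vartheta=\beta^{1/6}/(1+\beta^{1/6})\le\min(1,\beta^{1/6})$, we have $\vartheta^2\le\beta^{1/3}$, so the max equals $\beta^{1/3}$. Then
\[
\frac{\beta^{1/3}}{\vartheta}=\beta^{1/6}+\beta^{1/3}=2^{1/6}\Gs^{1/3}\Lam^{1/6}+2^{1/3}\Gs^{2/3}\Lam^{1/3},
\]
and this is at most $2^{1/3}\big(\Gs^{1/3}\Lam^{1/6}+\Gs^{2/3}\Lam^{1/3}\big)$. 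It therefore suffices to prove
\[
\frac{Y_0}{\rho^\star}\le\big[24D^2+288\bar F^2+1728c_1+7680c_2L^2\big]\,\Lcal\,\sqrt{\Lam}.
\]

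\emph{Step 2: bounding $Y_0/\rho^\star$ term by term.} I would use $Y_0=\max(2Y_1,Y_2)\le 2Y_1+Y_2$.
\begin{itemize}
\item The $Y_1$ term: $2Y_1/\rho=(24D^2+288\bar F^2)\rho=(24D^2+288\bar F^2)\sqrt\Lam$.
\item The pathwise bound: from \cref{eq:dbarM} and $(a+b+c)^2\le3(a^2+b^2+c^2)$ with $\Lam\ge1$, $\dbarM^2\le192\Lam(A^2+\sbar_1^2+D^2)$.
\item The $\Bdot$ term: using $\beta^\star=2\Lam\Gs^2\le2\Lam\overline G_\sigma^2$, we get $\BdotM=18(1+\beta^\star+\dbarM^2)\le18\Lam c_1$. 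Hence $96\BdotM/\rho\le1728c_1\sqrt\Lam$.
\item The $\kappa$ constant: $\kapM^2=2+2\dbarM^2/\sqrt\Lam\le2+384\sqrt\Lam(A^2+\sbar_1^2+D^2)\le c_2\sqrt\Lam$, again using $\Lam\ge1$.
\item The $\Bddot$ term, with $L^2\mapsto2\Lam L^2$: $\BddotM\le 80\Lam L^2\kapM^2\Lam^{-1/2}\Lcal\le80c_2L^2\Lam\Lcal$. Hence $96\BddotM/\rho\le7680c_2L^2\sqrt\Lam\,\Lcal$.
\end{itemize}
Since $\Lcal\ge1$, I multiply the log-free terms by $\Lcal$ and sum. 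Combining with Step 1 and the prefactor $2$ gives $C_{\Rcal}\Lcal(\Gs^{1/3}\Lam^{2/3}+\Gs^{2/3}\Lam^{5/6})$.

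\emph{Main obstacle.} The delicate point is the bookkeeping in Step 2, for two reasons. First, $L$ must be inflated only inside $\Bddot$ and not inside $\dbar$ or $\kappa$, as dictated by \cref{tab:watershed}. Second, each use of $\Lam\ge1$ must be placed where it absorbs a lower power of $\Lam$, such as the constant $2$ in $\kappa^2$ and the $1+\beta$ in $\Bdot$. The envelope $\overline G_\sigma$ is needed in Step 2 precisely to keep $c_1$ uniform in $\Gs$.
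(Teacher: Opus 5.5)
Your proposal is correct and follows the paper's proof essentially step for step. Both arguments use the same bounds on $\dbarM^2$, $\kapM^2$, $\BdotM$, $\BddotM$ and $2Y_1/\rho^\star$, merged through $Y_0\le 2Y_1+Y_2$ and $\Lcal\ge1$, together with the same exact evaluation $(\beta^\star)^{1/3}/\vartheta=(\beta^\star)^{1/6}+(\beta^\star)^{1/3}$ relaxed by $2^{1/6}\le 2^{1/3}$.
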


\begin{proof}
All steps use $\Lam\ge1$ (hence $\sqrt\Lam\ge1$ and
$\Lam/\sqrt\Lam=\sqrt\Lam$).

\emph{Step 1 ($\dbarM^2$).} By \cref{eq:dbarM} and
$(a+b+c)^2\le3(a^2+b^2+c^2)$,
\begin{equation*}
\begin{aligned}
\dbarM^2
\;\le\;& 192\big(A^2+\Lam\sbar_1^2+\Lam D^2\big)
\;\le\;192\big(A^2+\sbar_1^2+D^2\big)\,\Lam .
\end{aligned}
\end{equation*}

\emph{Step 2 ($\kapM^2$).}
\begin{equation*}
\begin{aligned}
\kapM^2
\;=\;& 2+\frac{2\dbarM^2}{\rho^\star}
\;\le\;2+\frac{384\big(A^2+\sbar_1^2+D^2\big)\Lam}{\sqrt\Lam}
\;\le\;c_2\,\sqrt\Lam .
\end{aligned}
\end{equation*}

\emph{Step 3 ($\BdotM$).}
\begin{equation*}
\begin{aligned}
\BdotM
\;=\;& 18\big(1+\beta^\star+\dbarM^2\big)
\;\le\;18\big(1+2\Lam\overline G_\sigma^2
  +192(A^2+\sbar_1^2+D^2)\Lam\big)
\;\le\;18\,c_1\,\Lam\\
\Longrightarrow\quad
\frac{96\BdotM}{\rho^\star}
\;\le\;& 1728\,c_1\,\sqrt\Lam .
\end{aligned}
\end{equation*}

\emph{Step 4 (absorption of the double inflation in $\BddotM$).}
With $\LM^2=2\Lam L^2$ and $\ln(1+\dbarM^2T)\le\Lcal$,
\begin{equation*}
\begin{aligned}
\BddotM
\;=\;& \frac{40\,\LM^2\,\kapM^2}{\rho^\star}\,\ln\big(1+\dbarM^2T\big)
\;\le\;\frac{80\Lam L^2\cdot c_2\sqrt\Lam\cdot\Lcal}{\sqrt\Lam}
\;=\;80\,c_2L^2\,\Lcal\,\Lam\\
\Longrightarrow\quad
\frac{96\BddotM}{\rho^\star}
\;\le\;& 7680\,c_2L^2\,\Lcal\,\sqrt\Lam .
\end{aligned}
\end{equation*}
Note that $\BddotM$ is only $\Ot(\Lam)$, not $\Ot(\Lam^2)$: the choice
$\rho^\star=\sqrt\Lam$ exactly cancels the $\sqrt\Lam$ of $\kapM^2$.
This cancellation is the entire function of the tuning.

\emph{Step 5 ($Y_1$).}
$\ \dfrac{2Y_1}{\rho^\star}=(24D^2+288\bar F^2)\,\rho^\star
=(24D^2+288\bar F^2)\sqrt\Lam$.

\emph{Step 6 (merge).} Using $Y_0=\max(2Y_1,Y_2)\le2Y_1+96\BdotM+96\BddotM$
and $\Lcal\ge1$,
\begin{equation*}
\begin{aligned}
\frac{Y_0}{\rho^\star}
\;\le\;& \Big[24D^2+288\bar F^2+1728c_1+7680c_2L^2\Big]\,
\Lcal\,\sqrt\Lam
\;\eqqcolon\;C_Y\,\Lcal\,\sqrt\Lam .
\end{aligned}
\end{equation*}

\emph{Step 7 (the max and $\vartheta$).} For every $\beta^\star=\sM^2>0$,
$\vartheta=\beta^{\star1/6}/(1+\beta^{\star1/6})$ and
$\vartheta^2\le\beta^{\star1/3}$, hence
\begin{equation*}
\begin{aligned}
\max\big(\beta^{\star1/3},\,\vartheta^2,\,\sM^2\beta^{\star-2/3}\big)
\;=\;& \max\big(\sM^{2/3},\,\vartheta^2,\,\sM^{2/3}\big)
\;=\;\sM^{2/3}
\;=\;\big(2\Lam\Gs^2\big)^{1/3}.
\end{aligned}
\end{equation*}

\emph{Step 8 (conclusion).}
\begin{equation*}
\begin{aligned}
\Rcal(\rho^\star,\beta^\star)
\;=\;& \frac{2Y_0}{\rho^\star}\,
  \frac{\beta^{\star1/3}}{\vartheta}\\
\;=\;& \frac{2Y_0}{\rho^\star}
  \big(\beta^{\star1/6}+\beta^{\star1/3}\big)\\
\;\le\;&2C_Y\Lcal\Big(
  2^{1/6}\Gs^{1/3}\Lam^{2/3}
  +2^{1/3}\Gs^{2/3}\Lam^{5/6}\Big),
\end{aligned}
\end{equation*}
which is bounded by the claimed expression because $2^{1/6}\le2^{1/3}$.
\end{proof}

\begin{lemma}[Burn-in coefficient at the tuned parameters]\label{lem:S}
Under the same hypotheses, with
$C_P\coloneqq1+4\bar F+4D^2+4L\sqrt{c_2}$,
\begin{equation*}
\begin{aligned}
  6912\,D^6\,\Lam^{5/2}
  \;\le{}&\Scal(\rho^\star,\beta^\star)
  \;\le\;C_{\Scal}\,\Lcal^{3/2}\,\Lam^{5/2},\\
C_{\Scal}
  \;\coloneqq\;&2\Big[54\,C_P^3+(3840\,c_2L^2)^{3/2}
  +1728\,c_1(1+2\overline G_\sigma^2)\Big].
\end{aligned}
\end{equation*}
\end{lemma}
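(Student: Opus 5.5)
The plan is to work directly from the definition $\Scal(\rho,\beta)=2Z_0/\rho$ in \cref{eq:master}, where $Z_0=\max(2Z_1,Z_2)$ is taken from \cref{eq:chain} under the substitutions \cref{eq:threesubs}. I will evaluate it at $\rho^\star=\sqrt\Lam$ and $\beta^\star=\sM^2$. Both bounds use only $\Lam\ge1$ and the intermediate estimates already obtained in the proof of \cref{lem:D}:
\begin{equation*}
\dbarM^2\le192(A^2+\sbar_1^2+D^2)\Lam,\qquad
\kapM^2\le c_2\sqrt\Lam,\qquad
\BdotM\le18c_1\Lam,\qquad
\BddotM\le80c_2L^2\Lcal\Lam .
\end{equation*}

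\emph{Lower bound.} I will use $Z_0\ge2Z_1=54P_0^3$ and keep only the $D$-term of $P_0$, namely $P_0\ge4D^2\rho^{\star2}=4D^2\Lam$. This gives
\begin{equation*}
Z_0\ge54\cdot64\,D^6\Lam^3=3456\,D^6\Lam^3 .
\end{equation*}
Dividing by $\rho^\star=\sqrt\Lam$ and multiplying by $2$ yields $\Scal(\rho^\star,\beta^\star)\ge6912\,D^6\Lam^{5/2}$. This step does not involve $\dbar$ at all, so it holds for any admissible pathwise bound.

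\emph{Upper bound.} I will bound the maximum by the sum, $Z_0\le2Z_1+Z_2$, and treat each of the three pieces separately, each time aiming for a multiple of $\Lam^3$ (times powers of $\Lcal$) so that division by $\sqrt\Lam$ gives $\Lam^{5/2}$.

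\begin{enumerate}[label=(\roman*),leftmargin=2.2em,itemsep=1pt,topsep=2pt]
\item \emph{The $Z_1$ piece.} From Step~2 of \cref{lem:D}, $\kapM\le\sqrt{c_2}\,\Lam^{1/4}$, so $4L\kapM/\rho^\star\le4L\sqrt{c_2}\,\Lam^{-1/4}\le4L\sqrt{c_2}\,\Lam$. Also $4\bar F\rho^\star\le4\bar F\Lam$ and $1\le\Lam$. Together these give $P_0\le C_P\Lam$, hence $2Z_1\le54\,C_P^3\Lam^3$.
\item \emph{The $\BddotM$ piece.} Using $\BddotM\le80c_2L^2\Lcal\Lam$, we get $(48\BddotM)^{3/2}\le(3840c_2L^2)^{3/2}\Lcal^{3/2}\Lam^{3/2}$. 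This is at most $(3840c_2L^2)^{3/2}\Lcal^{3/2}\Lam^{3}$.
\item \emph{The $\BdotM$ piece.} With $\beta^\star=\sM^2$, the noise factor collapses to $\sM^2(1+1/\beta^\star)=\sM^2+1\le(1+2\overline G_\sigma^2)\Lam$. Combined with $\BdotM\le18c_1\Lam$, this gives $96\BdotM(\sM^2+1)\le1728\,c_1(1+2\overline G_\sigma^2)\Lam^2\le1728\,c_1(1+2\overline G_\sigma^2)\Lam^3$.
\end{enumerate}

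Summing the three pieces, multiplying by $2/\sqrt\Lam$, and using $\Lcal\ge1$ to place the common factor $\Lcal^{3/2}$ in front gives exactly $C_{\Scal}\Lcal^{3/2}\Lam^{5/2}$.

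The only step needing care is (i), specifically the $4L\kapM/\rho^\star$ term of $P_0$. There one must check that the cancellation between $\kapM\sim\Lam^{1/4}$ and $\rho^\star=\sqrt\Lam$ leaves no positive power of $\Lam$ beyond the linear one. This is what makes $Z_1$ contribute precisely $\Lam^3$, and hence the $\rho^6$-driven $\Lam^{5/2}$ that dominates $\Scal$. The remaining steps are routine monotone relaxations using $\Lam\ge1$.
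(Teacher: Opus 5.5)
Your proposal is correct and follows the paper's own proof essentially line by line. For the lower bound you use $Z_0\ge2Z_1$ with $P_0\ge4D^2\Lam$. For the upper bound you use $Z_0\le2Z_1+Z_2$, together with $P_0\le C_P\Lam$ via $\kapM\le\sqrt{c_2}\,\Lam^{1/4}$, the Step~3--4 bounds on $\BdotM$ and $\BddotM$, the collapse $\sM^2(1+1/\beta^\star)=\sM^2+1\le(1+2\overline G_\sigma^2)\Lam$, and padding with $\Lcal^{3/2}\ge1$ and $\Lam\ge1$.
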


\begin{proof}
\emph{Upper bound.} At $\rho^\star=\sqrt\Lam$, using Step 2 of
\cref{lem:D} ($\kapM\le\sqrt{c_2}\,\Lam^{1/4}$) and $\Lam\ge1$,
\begin{equation*}
\begin{aligned}
P_0
\;=\;& 1+4\bar F\rho^\star+4D^2\rho^{\star2}
   +\frac{4L\kapM}{\rho^\star}
\;\le\;1+4\bar F\Lam+4D^2\Lam+4L\sqrt{c_2}\,\Lam^{-1/4}
\;\le\;C_P\,\Lam ,
\end{aligned}
\end{equation*}
so $2Z_1=54P_0^3\le54\,C_P^3\,\Lam^3$. For $Z_2$, by Steps 3--4 of
\cref{lem:D} and $\beta^\star=\sM^2$,
\begin{equation*}
\begin{aligned}
Z_2
\;=\;& (48\BddotM)^{3/2}
   +96\,\BdotM\big(\beta^\star+1\big)
\;\le\;(3840\,c_2L^2)^{3/2}\,\Lcal^{3/2}\,\Lam^{3/2}
   +1728\,c_1(1+2\overline G_\sigma^2)\,\Lam^{2}.
\end{aligned}
\end{equation*}
Hence, padding every term with $\Lcal^{3/2}\ge1$ and
$\Lam^{3}\ge\Lam^{2}\ge\Lam^{3/2}$,
\begin{equation*}
\begin{aligned}
\Scal
\;=\;& \frac{2Z_0}{\rho^\star}
\;\le\;\frac{2\big(2Z_1+Z_2\big)}{\sqrt\Lam}
\;\le\;C_{\Scal}\,\Lcal^{3/2}\,\Lam^{5/2}.
\end{aligned}
\end{equation*}

\emph{Lower bound.} $P_0\ge4D^2\rho^{\star2}=4D^2\Lam$, so
$Z_1=27P_0^3\ge1728\,D^6\Lam^3$ and
\begin{equation*}
\begin{aligned}
\Scal
\;=\;& \frac{2Z_0}{\rho^\star}
\;\ge\;\frac{4Z_1}{\sqrt\Lam}
\;\ge\;6912\,D^6\,\Lam^{5/2}.
\end{aligned}
\end{equation*}
\end{proof}

When $D>0$, the two bounds in \cref{lem:S} yield
$\Scal=\Tht(\Lam^{5/2})$ up to logarithmic factors. When $D=0$, the upper
bound remains valid, but the displayed lower coefficient vanishes and no
matched-order claim is made. This lower bound is why
\cref{rem:loworder} states the burn-in term
explicitly: at the tuned $\rho^\star$ the constant $Z_1=27P_0^3$ scales as
$\rho^{\star6}=\Lam^3$. This $\rho^6$-dependence drives
$\Scal=\Tht(\Lam^{5/2})$, not the $\Bddot^{3/2}$ term, which is only
$\Tht(\Lam^{3/2})$ here. (The roles reverse in the oblivious regime. See
\cref{app:oblivious}.) Tuning $\rho$ jointly with the target
accuracy can trade the two coefficients against each other, but
\cref{thm:lower} shows the $T^{-1/3}$ coefficient cannot be improved.

\begin{proof}[Proof of \cref{thm:tuned}]
By \cref{prop:transfer} the master bound \cref{eq:master} holds under the
substitutions \cref{eq:threesubs}. The bounds for its two
coefficients follow from \cref{lem:D,lem:S}, giving
\cref{eq:tunedbound}. For the complexity claim, let
$0<\varepsilon\le1$ and choose
\begin{equation*}
  T+1\;\ge\;\max\Big\{\big(2\Rcal/\varepsilon\big)^{3},\;
  \big(2\Scal/\varepsilon\big)^{2},\;T_0\Big\},
\end{equation*}
so that each term of \cref{eq:tunedbound} is at most $\varepsilon/2$, and
therefore
$\E[\gap(x_{\hat t})]=\E\big[\tfrac1{T+1}\sum_t\gap(x_t)\big]\le\varepsilon$
for the uniformly drawn output (and a fortiori
$\E[\min_t\gap(x_t)]\le\varepsilon$). The burn-in term obeys
\begin{equation*}
\begin{aligned}
T_0
\;\le\;&1+(128\tmix)^{3/4}
\;\le\;C_0\Lam^5\varepsilon^{-2},
\end{aligned}
\end{equation*}
for an absolute constant $C_0$, because $\tmix\ge1$,
$\Lam\ge306\tmix$, and $\varepsilon\le1$. Therefore the burn-in horizon
is absorbed. By \cref{lem:D,lem:S} and
$(a+b)^3\le4(a^3+b^3)$ for $a,b\ge0$,
\begin{equation*}
  T\;=\;\Ot\big(\Lam^{2}\,\Gs\,\varepsilon^{-3}
      +\Lam^{5/2}\,\Gs^{2}\,\varepsilon^{-3}
      +\Lam^{5}\,\varepsilon^{-2}\big)
  \;=\;\Ot\big(\tmix^{2}\,\Gs\,\varepsilon^{-3}
      +\tmix^{5/2}\,\Gs^{2}\,\varepsilon^{-3}
      +\tmix^{5}\,\varepsilon^{-2}\big)
\end{equation*}
iterations, and by \cref{lem:mlmc}(d) the expected number of samples is at
most $(1+\log_2 T)$ times the number of iterations, absorbed in
$\Ot(\cdot)$.

Two details of the constant propagation remain. First,
$\Lam=306\tmix(1+\log_2 T)$ depends on $T$
through a logarithm only. Solving the displayed condition for $T$ with
$\Lam=\Lam(T)$ changes the result by polylogarithmic factors, absorbed in
$\Ot(\cdot)$. Second, when $\beta^\star=2\Lam\Gs^2\ge1$, the
$\Lam^{5/2}\Gs^2\varepsilon^{-3}$ term dominates its low-noise companion,
and the original high-effective-noise expression is recovered.
\end{proof}

\begin{lemma}[Exact-oracle transfer]\label{lem:exacttransfer}
If $\Gs=0$, then for every $T\ge1$ the master bound
\cref{eq:master} holds without the burn-in condition $T\ge T_0$, with
$\sigma^2=0$, $\sbar=0$, and with the occurrence of $L^2$ inside
$\Bddot$ replaced by $0$.  The smoothness constant $L$ in $P_0$ remains
unchanged.
\end{lemma}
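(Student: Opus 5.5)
The plan is to show that when $\Gs=0$ the estimator is exact pathwise, so $s_t\equiv0$, and then re-run the three interfaces of \cref{prop:transfer} with all recursion quantities set to zero.

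\emph{Exactness.} By \cref{asm:noise} with $\Gs=0$ we have $\nabla f(x;z)=\nabla f(x)$ for every $z$. Then every prefix average satisfies $\hat\mu^{j}[\nabla f(y;\cdot)]=\nabla f(y)$, and every level difference in \cref{eq:mlmc} vanishes pathwise. Hence $\ghat_t(y)=\nabla f(y)$ for every realization of $J_t$ and the burst. This needs neither mixing nor the bias bounds of \cref{lem:mlmc}, which is why the burn-in $T\ge T_0$ is not needed.

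\emph{Induction on $t$.} At $t=0$ we get $\gpre=\nabla f(x_0)$, hence $s_0^{\mathrm{pre}}=0$. For $t\ge1$, if $s_{t-1}=0$, the decomposition in the proof of \cref{thm:reduction} gives $\spre=(1-\alpha_t)s_{t-1}+(1-\alpha_t)e_t+\alpha_t n_t=0$, because $e_t=0$ and $n_t=0$ pathwise. In both cases \cref{lem:clipsafe}(c) gives $\norm{s_t}\le\norm{\spre}=0$. Since the true gradient lies in the $\Ghat$-ball, clipping is the identity here.

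\emph{Interfaces.} We now verify the three interfaces used in \cref{prop:transfer}.
\begin{enumerate}[label=(\roman*),leftmargin=2.2em,itemsep=1pt,topsep=2pt]
\item The pathwise descent inequality of \citep[Lemma~6]{yuan2026alfcg} holds verbatim with $\norm{s_t}^2=0$. The $L$ appearing there comes from smoothness of $f$, not from the recursion, so the $L$ in $P_0$ and in $\dbar$ (through $A=LD+C$) is unchanged.
\item The error recursion holds trivially as $0\le(1-\alpha_t)\cdot0+0$, that is, with $\sigma^2=0$ and with zero coefficient on $\norm{x_t-x_{t-1}}^2$. This is why the $L^2$ inside $\Bddot$ may be replaced by $0$, which makes $\Bddot=0$. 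The cumulative interface \cref{eq:conditional-cumulative} then reads $S=0$. This follows directly and also from \cref{lem:conditionalabel} with $\widetilde B_t=0$.
\item The pathwise displacement bound is \cref{eq:lemmaP}: $\delta_t\le 2G+\norm{s_t}=2G\le 8(A+0+\rho D)$. So $\sbar=0$ is admissible in $\dbar$.
\end{enumerate}

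\emph{Propagation.} Finally, rerun the expectation-level case analysis. With $S=0$, \cref{eq:expectation-cases} collapses to $X\le Z_1+Y_1M(T+1)^{1/3}$. This is dominated by the $\Omega$ built from $Y_0,Z_0$ at $\sigma^2=0$, $\Bddot=0$ and $\dbar|_{\sbar=0}$. Monotonicity of the chain \cref{eq:chain} then gives \cref{eq:master}.

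\emph{Main obstacle.} The step needing care is confirming that no step of \citep[Appendix~H]{yuan2026alfcg} uses $T\ge T_0$ or a strictly positive $\sigma^2$ or $\Bddot$ (for example, division by $\sigma$ or $\Bddot$, or a logarithm of $\Bddot$). I expect the only such occurrence to be the burn-in used in \cref{thm:reduction}'s bias absorption, which is bypassed because the biases are identically zero. The occurrences of $\sigma^2/\beta$ and $\sigma^2\beta^{-2/3}$ simply vanish, which is legitimate since $\beta>0$.
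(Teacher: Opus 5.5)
Your proposal is correct and follows essentially the same route as the paper. The shared steps are: with $\Gs=0$ every MLMC level is exact, induction gives $s_t\equiv0$ with clipping inactive, and the three transfer interfaces are re-verified with zero recursion coefficients and $\sbar=0$, the burn-in $T\ge T_0$ being needed only for the now-vanishing bias absorption. Your explicit checks that $\delta_t\le 2G\le 8(A+\rho D)$ and that the expectation-level case split collapses to $X\le Z_1+Y_1M(T+1)^{1/3}$ add detail beyond the paper's proof but do not change the argument.
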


\begin{proof}
$\Gs=0$ implies $\nabla f(x;z)=\nabla f(x)$ for every $(x,z)$. Hence all
levels of the MLMC estimator at a fixed point coincide, and
$\ghat_t(x)=\nabla f(x)$ pointwise. Inductively, $g_{-1}=0$ and
$\alpha_0=1$ give $g_0^{\mathrm{pre}}=\nabla f(x_0)$. If
$g_{t-1}=\nabla f(x_{t-1})$, then
\begin{equation*}
\begin{aligned}
g_t^{\mathrm{pre}}
\;=\;&(1-\alpha_t)
  \big(\nabla f(x_{t-1})-\nabla f(x_{t-1})\big)+\nabla f(x_t)
\;=\;\nabla f(x_t).
\end{aligned}
\end{equation*}
Since $\norm{\nabla f(x_t)}\le G\le\Ghat$, clipping is inactive and
$s_t=0$ for all $t$. The three transfer interfaces in the proof of
\cref{prop:transfer} therefore hold with the exact recursion
$\E_{t-1}\norm{s_t}^2=0$, whose noise and displacement coefficients are
both zero, and with the pathwise scale $\sbar=0$. The condition
$T\ge T_0$ enters only when the Markovian conditional bias is absorbed in
\cref{thm:reduction}. Here that bias is identically zero. The downstream
deterministic constant propagation thus gives \cref{eq:master} for
every $T\ge1$ with the stated specialization.
\end{proof}

\begin{proof}[Proof of \cref{cor:zeronoise}]
Run \algname{} with fixed $\rho=\rho_0=\Theta(1)$ and
$\beta=\beta_T\coloneqq(T+1)^{-1}>0$. In the transferred constant
propagation, the
exact-oracle transfer of \cref{lem:exacttransfer} sets both
recursion-only coefficients to zero, so $\Bddot=0$ and
\begin{equation*}
  Y_0=O(1),\qquad Z_0=O(1),\qquad
  \vartheta=\frac{\beta_T^{1/6}}{1+\beta_T^{1/6}}.
\end{equation*}
Moreover,
\begin{equation*}
\begin{aligned}
\frac{\max(\beta_T^{1/3},\vartheta^2,0)}{\vartheta}
\;=\;&\frac{\beta_T^{1/3}}{\vartheta}
\;=\;\beta_T^{1/6}+\beta_T^{1/3}
\;=\;O\big((T+1)^{-1/6}\big).
\end{aligned}
\end{equation*}
Substitution into \cref{eq:master} gives
\begin{equation*}
\E\Big[\frac1{T+1}\sum_{t=0}^T\gap(x_t)\Big]
\;=\;\Ot\big((T+1)^{-1/2}\big),
\end{equation*}
and therefore $\Ot(\varepsilon^{-2})$ expected samples after including
the logarithmic expected burst cost. No constant in this specialization
depends on $\tmix$.
\end{proof}
\section{Proofs for
\texorpdfstring{\cref{sec:res-lower}}{the Lower-Bound Section}:
An In-Analysis Exponent Bound and Pre-Clipping Inflation}
\label[appendix]{app:lower}

\subsection{Proof of \texorpdfstring{\cref{thm:lower}}{the lower bound}}

\begin{proof}[Proof of \cref{thm:lower}]
Fix any $\rho>0$, $\beta>0$, and any $\dbar\ge0$ substituted into
\cref{eq:chain}. The proof uses only two inequalities in the transferred
constant propagation. Both are independent of $\dbar$, which makes the
bound uniform over every admissible pathwise bound:
\begin{equation*}
\begin{aligned}
Y_0
\;\ge\;& 2Y_1\;=\;24D^2\rho^2+288\bar F^2\rho^2\;\ge\;24D^2\rho^2
&&\Longrightarrow\quad \frac{Y_0}{\rho}\;\ge\;24D^2\rho\,,\\
Y_0
\;\ge\;& Y_2\;\ge\;96\Bdot\;=\;96\cdot18\big(1+\beta+\dbar^2\big)
\;\ge\;1728\,\beta
&&\Longrightarrow\quad \frac{Y_0}{\rho}\;\ge\;\frac{1728\,\beta}{\rho}\,.
\end{aligned}
\end{equation*}
Hence
\begin{equation*}
\begin{aligned}
\frac{Y_0}{\rho}
\;\ge\;& \max\Big(24D^2\rho,\;\frac{1728\,\beta}{\rho}\Big)
\;\cge{1}\;\sqrt{24\cdot1728}\;D\,\sqrt{\beta}
\;=\;144\sqrt2\;D\,\sqrt{\beta}\,,
\end{aligned}
\end{equation*}
where \stepmark{1} is $\max(a,b)\ge\sqrt{ab}$
($24\cdot1728=41472=2\cdot144^2$). Using $\vartheta\le1$ in
\cref{eq:master},
\begin{equation*}
\paperfitdisplay{\begin{aligned}
\Rcal(\rho,\beta)
\;\ge\;& 2\,\frac{Y_0}{\rho}\,
   \max\big(\beta^{1/3},\,\sM^2\beta^{-2/3}\big)
\;\ge\;288\sqrt2\;D\;g(\beta),
\qquad
g(b)\coloneqq\sqrt b\,\max\big(b^{1/3},\,\sM^2 b^{-2/3}\big).
\end{aligned}}
\end{equation*}
On $b\le\sM^2$ we have $b^{1/3}\le\sM^2b^{-2/3}$, so
$g(b)=\sM^2\,b^{-1/6}$, strictly decreasing. On $b\ge\sM^2$,
$g(b)=b^{5/6}$, strictly increasing. Hence
\begin{equation*}
\begin{aligned}
\min_{b>0}g(b)
\;=\;& g\big(\sM^2\big)
\;=\;\sM\cdot\big(\sM^2\big)^{1/3}
\;=\;\sM^{5/3},
\end{aligned}
\end{equation*}
and $\Rcal(\rho,\beta)\ge288\sqrt2\,D\,\sM^{5/3}
=288\sqrt2\cdot2^{5/6}D\,\Gs^{5/3}\Lam^{5/6}$ for all $(\rho,\beta)$.
Combining with \cref{lem:D} gives the squeeze
\begin{equation*}
\begin{aligned}
  288\sqrt2\cdot2^{5/6}\,D\,\Gs^{5/3}\,\Lam^{5/6}
  \;\le\;
  \min_{\rho,\beta}\Rcal(\rho,\beta)
  \;\le\;
  \Rcal(\rho^\star,\beta^\star)
  \;\le\;
  C_{\Rcal}\,\Lcal
  \big(\Gs^{1/3}\Lam^{2/3}+\Gs^{2/3}\Lam^{5/6}\big).
\end{aligned}
\end{equation*}
For fixed $D>0$ and $\Gs>0$, the $\Lam^{5/6}$ term dominates the tuned
upper bound as $\Lam\to\infty$.  Thus the upper and lower bounds have the
same asymptotic $\Lam$-exponent, up to
$\Lcal=O(\log(\Lam T))$.  The comparison is not uniform in $\Gs$, and
does not identify the exact minimizer of the full coefficient.
\end{proof}

\begin{remark}[What the surrogate balance certifies]
The minimizer of $g$ is uniquely $b=\sM^2$: the parameter $\beta$ appears
in the numerator through $\Bdot$ and in the denominator through the
stepsize schedule, and the two branches cross exactly at $\beta=\sM^2$.
This lower-bound surrogate motivates the tuning
$\beta^\star=\sM^2$, consistent with \citep[Remark~29]{yuan2026alfcg}.
It does not prove that $\beta^\star$ exactly minimizes the full
$\Rcal(\rho,\beta)$, whose remaining terms also depend on the parameters.
\end{remark}

\subsection{Proof of \texorpdfstring{\cref{prop:inflation}}{the inflation
proposition}}

\begin{proof}[Proof of \cref{prop:inflation}]
\emph{Instance.} As in \cref{prop:esssup}, take $\Z=\{-1,+1\}$,
$P(z,-z)=p$ with $p\in(0,\tfrac14]$, stationary (uniform) initialization,
and $\nabla f(x;z)=\nabla f_0(x)+\Gs z\,u$ for a unit vector $u$, so that
at $t=0$ (where $\alpha_0=1$, $g_{-1}=0$) the pre-clip error is the scalar
MLMC error: $s_0^{\mathrm{pre}}=n_0=\big(\mlmc{0}[\varphi]-\bar\varphi\big)$
with $\varphi(z)=\Gs z\,u+\nabla f_0(x_0)$. Write $r\coloneqq1-2p$ and
$q\coloneqq2p$. Under stationarity
$\Cov\big(z^{(i)},z^{(k)}\big)=r^{\abs{i-k}}$.

\emph{Exact segment moments.} For the averages
$\hat A\coloneqq\frac1n\sum_{i=1}^{n}z^{(i)}$ and
$\hat B\coloneqq\frac1n\sum_{i=n+1}^{2n}z^{(i)}$ over two adjacent
segments of length $n$, summing the geometric covariances gives
\begin{equation*}
  n^2\,\Var(\hat A)
  \;=\;\frac{n(1+r)}{q}-\frac{2r(1-r^{n})}{q^{2}},
  \qquad
  n^2\,\Cov(\hat A,\hat B)
  \;=\;\frac{r\,(1-r^{n})^{2}}{q^{2}}\;>\;0 .
\end{equation*}
With $n=2^{j-1}$, $\hat\mu^{j}-\hat\mu^{j-1}=\tfrac12(\hat B-\hat A)\Gs u$
and $\Var(\hat A)=\Var(\hat B)$ by stationarity, so
\begin{equation*}
  V_j
  \;=\;\frac{\Gs^2}{2}\Big(\Var(\hat A)-\Cov(\hat A,\hat B)\Big).
\end{equation*}
The two halves are \emph{positively} correlated, and the covariance
\emph{reduces} $V_j$. It must be controlled explicitly, not ignored.

\emph{Controlling the covariance.} From $(1-r^{n})^{2}\le1$,
$n^2\Cov(\hat A,\hat B)\le r/q^{2}$, and the same relaxation bounds the
second term of $\Var(\hat A)$, so
\begin{equation*}
\begin{aligned}
n^2\big(\Var(\hat A)-\Cov(\hat A,\hat B)\big)
\;\ge\;& \frac{n(1+r)}{q}-\frac{3r}{q^{2}}
\;\cge{1}\;\frac{n(1+r)}{4q}
\;\ge\;\frac{n}{4q},
\end{aligned}
\end{equation*}
where \stepmark{1} holds whenever $n\ge2/q=1/p$: then
$n(1+r)\ge\tfrac{2}{q}(1+r)\ge\tfrac{4r}{q}$ (using $r\le1$), so
$\tfrac{3r}{q^2}\le\tfrac34\cdot\tfrac{n(1+r)}{q}$. Consequently, for
every level with $2^{j-1}\ge1/p$,
\begin{equation*}
\begin{aligned}
2^{j}V_j
\;=\;& 2n\,V_j
\;\ge\;2n\cdot\frac{\Gs^2}{2}\cdot\frac{1}{4qn^{2}}\cdot n
\;=\;\frac{\Gs^2}{4q}
\;=\;\frac{\Gs^2}{8p}.
\end{aligned}
\end{equation*}

\emph{Conversion to the mixing time.} For this chain
$\dmix(k)=\tfrac12r^{k}$, so
$\tmix=\lceil\ln2/\ln(1/r)\rceil$. For $p\in(0,\tfrac14]$ one has
$2p\le\ln(1/r)\le4p$, whence
\begin{equation*}
  \tmix\;\le\;\frac1p\;\le\;6\,\tmix .
\end{equation*}
Thus every valid level contributes $2^{j}V_j\ge\Gs^2/(8p)\ge\Gs^2\tmix/8$,
and validity $2^{j-1}\ge1/p$ is implied by $2^{j}\ge16\tmix$
(since $2/p\le12\tmix$). The number of levels $j$ with
$16\tmix\le2^{j}\le2^{\jmax}$ is at least $\log\big(T/(64\tmix)\big)$
(using $2^{\jmax}\ge T/2$ and generous rounding).

\emph{Assembling.} Write $X\coloneqq\hat\mu^{0}-\bar\varphi$
(so $\norm{X}\le\Gs$ pointwise) and
$C_J\coloneqq2^{J_0}(\hat\mu^{J_0}-\hat\mu^{J_0-1})\ind{J_0\le\jmax}$.
From $\norm{C_J}^2\le2\norm{X+C_J}^2+2\norm{X}^2$ and
$J_0\perp$ chain,
\begin{equation*}
\paperfitdisplay{\begin{aligned}
\E\norm{s_0^{\mathrm{pre}}}^2
\;=\;\E\norm{X+C_J}^2
\;\ge\;& \tfrac12\,\E\norm{C_J}^2-\E\norm{X}^2
\;=\;\tfrac12\sum_{j\le\jmax}2^{-j}\,4^{j}\,V_j-\E\norm{X}^2\\
\;\ge\;& \tfrac12\sum_{j:\,16\tmix\le2^{j}\le2^{\jmax}}2^{j}V_j-\Gs^2
\;\ge\;\frac{\Gs^2\tmix}{16}\,\log\!\Big(\frac{T}{64\tmix}\Big)-\Gs^2 .
\end{aligned}}
\end{equation*}
Finally, if $T\ge(64\tmix)^2$ then $T/(64\tmix)\ge\sqrt T$, so
$\log(T/(64\tmix))\ge\tfrac12\log T$ and the bound is
$\ge\tfrac{\Gs^2\tmix\log T}{32}-\Gs^2=\Omega(\Lam\Gs^2)$ up to absolute
constants. (For the displayed constant to be positive at finite $T$ one
additionally needs $\tmix\log(T/(64\tmix))\ge32$. We use the bound in its
asymptotic reading.)
\end{proof}

\begin{remark}[What \cref{prop:inflation} certifies]
The proposition shows that the \emph{pre-clipping} MLMC error can attain
the scale $\Theta(\Lam\Gs^2)$, up to absolute constants.  It therefore
validates the raw-estimator moment bound used by the black-box reduction.
It does not imply that the clipped error has the same lower bound:
$\norm{s_t}\le\norm{s_t^{\mathrm{pre}}}$ and
$\norm{s_t}\le2\Ghat$ pathwise.  A clipping-aware recursion could thus
improve the effective-noise coefficient. \Cref{thm:lower} applies only to
the transferred constant propagation with the substitution
\cref{eq:substitution}.
\end{remark}
\section{Proof for \texorpdfstring{\cref{sec:res-oblivious}}{the Oblivious-Complexity Section}}\label[appendix]{app:oblivious}

\begin{proof}[Proof of \cref{thm:oblivious}]
Fix $\rho=\rho_0$ and $\beta=\beta_0$, constants independent of $\tmix$
(for concreteness $\rho_0=\beta_0=1$, while general $\Theta(1)$ values only
change the constants below). The clipping radius $\Ghat$ is likewise
$\tmix$-free, so the algorithm as a whole uses no mixing-time knowledge.
$\Lam$ appears in the \emph{analysis} only.
Since $\beta_0$ is not tied to $\sM^2$, we do not pass through the tuned
specialization of \citep[Theorem~28]{yuan2026alfcg}, which pre-substitutes
$\beta\propto\sigma^2$. We instantiate the general chain
\cref{eq:chain}--\cref{eq:master} of \cref{prop:transfer} directly.

Because clipping gives the $\Lam$-free pathwise scale
$\sbar=2\Ghat$, the front of the chain does not inflate at all:
\begin{equation*}
\paperfitdisplay{\begin{aligned}
\dbar
\;=\;& 8\big(A+2\Ghat+\rho_0D\big)\;=\;\Theta(1),
\qquad
\kappa^2\;=\;2+2\dbar^2\;=\;\Theta(1),
\qquad
\Bdot\;=\;18\big(2+\dbar^2\big)\;=\;\Theta(1),
\end{aligned}}
\end{equation*}
and $\Lam$ enters exactly once, through the recursion coefficient in
$\Bddot$:
\begin{equation*}
\begin{aligned}
\Bddot
\;=\;& 40\,\LM^2\,\kappa^2\,\ln\big(1+\dbar^2T\big)
\;=\;80\Lam L^2\kappa^2\ln\big(1+\dbar^2T\big)
\;=\;\Tht(\Lam),
\end{aligned}
\end{equation*}
whence $Y_1=\Theta(1)$, $Y_2=96(\Bdot+\Bddot)=\Ot(\Lam)$, and
$Y_0=\Ot(\Lam)$. For the stepsize block, $\vartheta=\Theta(1)$ and
\begin{equation*}
\begin{aligned}
\max\big(\beta_0^{1/3},\,\vartheta^2,\,\sM^2\beta_0^{-2/3}\big)
\;\le\;& C_0\big(1+\sM^2\big)
\;=\;O\big(1+\Lam\Gs^2\big),
\end{aligned}
\end{equation*}
where $C_0$ depends only on the fixed $\beta_0$.
Therefore
\begin{equation*}
\begin{aligned}
\Rcal
\;=\;& \frac{2Y_0\max(\cdot)}{\rho_0\vartheta}
\;=\;\Ot\big(\Lam+\Lam^{2}\Gs^2\big),
\end{aligned}
\end{equation*}
and for the burn-in block,
\begin{equation*}
\begin{aligned}
Z_1
\;=\;& 27P_0^3\;=\;\Theta(1),\\
Z_2\;=\;& (48\Bddot)^{3/2}
   +96\Bdot\,\sM^2\Big(1+\tfrac1{\beta_0}\Big)
\;=\;\Ot\big(\Lam^{3/2}+\Lam\Gs^2\big),
\end{aligned}
\end{equation*}
so
$Z_0=\Ot(\Lam^{3/2}+\Lam\Gs^2)$ and
$\Scal=2Z_0/\rho_0=\Ot(\Lam^{3/2}+\Lam\Gs^2)$.
Solving $\Rcal(T+1)^{-1/3}\le\varepsilon/2$ and
$\Scal(T+1)^{-1/2}\le\varepsilon/2$ as in the proof of \cref{thm:tuned}
and multiplying by the $O(\log T)$ expected samples per iteration
(\cref{lem:mlmc}(d)) gives
\begin{equation*}
\begin{aligned}
  T
  \;=\;&\Ot\big(\Rcal^3\varepsilon^{-3}+\Scal^2\varepsilon^{-2}\big)\\
  \;=\;&\Ot\Big(
    (\Lam^3+\Lam^6\Gs^6)\varepsilon^{-3}
    +(\Lam^3+\Lam^2\Gs^4)\varepsilon^{-2}
  \Big)\\
  \;=\;&\Ot\big(\Lam^{6}\,\varepsilon^{-3}+\Lam^{3}\,\varepsilon^{-2}\big)
  \;=\;\Ot\big(\tmix^{6}\,\varepsilon^{-3}
      +\tmix^{3}\,\varepsilon^{-2}\big).
\end{aligned}
\end{equation*}
The penultimate simplification treats the finite $\Gs$ as a fixed problem
constant. The preceding line records the non-uniform dependence explicitly.
\end{proof}

\begin{remark}[Where the regimes differ]
In the oblivious upper bound the potentially largest burn-in contribution is the
$(48\Bddot)^{3/2}=\Ot(\Lam^{3/2})$ term, while $Z_1=\Theta(1)$. In the
tuned regime of \cref{app:rates} the roles are exactly reversed
($Z_1=\Theta(\Lam^3)$ via $\rho^{\star6}$, while
$(48\BddotM)^{3/2}=\Ot(\Lam^{3/2})$). The dominant constant depends on the
parameter regime. Tracking only one contribution would give an incorrect
bound in one of the two regimes.
\end{remark}

\begin{remark}[Effect of parameter independence]
Relative to the tuned theorem, the cost of obliviousness is regime
dependent. The tuned high-effective-noise branch scales as
$\tmix^{5/2}\Gs^2\varepsilon^{-3}$, while its low-effective-noise companion
scales as $\tmix^2\Gs\varepsilon^{-3}$. The oblivious stochastic term scales
as $\tmix^6\varepsilon^{-3}$ after fixed noise constants are suppressed.
The mixing-time-oblivious baseline of
\citep{dorfman2022adapting} is non-composite, not projection-free, and does
not use variance reduction, so its $\Ot(\varepsilon^{-4})$ rate is not a
direct comparator. We do not claim simultaneous obliviousness and the
tuned high-effective-noise exponent.
\end{remark}
\section{Experiments}\label[appendix]{sec:experiments}

The first experiment addresses a focused question: how does each method
deteriorate as temporal dependence increases while the stationary objective
remains fixed? We reproduce the low-rank multiclass-classification setting
used for the base method \citep{yuan2026alfcg} and replace only the
independent data stream by a family of Markov chains with a common
stationary distribution. The study assesses robustness to dependence. It
does not establish large-scale empirical superiority or verify an
asymptotic exponent.

\subsection{Datasets and Tasks}\label{sec:exp-data}

We minimize $f(X)=\E_{z\sim\pi}[f(X;z)]$, the multinomial logistic loss,
over $\X=\{X\in\R^{d\times K}:\norm{X}_{*}\le r\}$ with $r=10$ (the LMO
returns one dominant singular pair and $h=0$). The data are a synthetic
Gaussian mixture in the spirit of the \texttt{randn} instances of
\citep{yuan2026alfcg}, with $n=1000$ unit-normalized points in $\R^{50}$
and $K=10$ classes. To isolate mixing from the objective, every experiment
uses the same uniform stationary law $\pi_i=1/n$. Only the transition
kernel changes:
\begin{equation*}
  P_q=(1-q)I+q\,\bm{1}\bm{1}^{\mathsf T}/n .
\end{equation*}
This lazy-refresh family is symmetric and doubly stochastic, hence
reversible with respect to the common $\pi$. Moreover, its worst-case
total-variation distance is available in closed form,
$d_{\rm mix}(k)=(1-q)^k(1-1/n)$. We choose $q$ so that the mixing time in
\cref{asm:markov} is exactly
$\tmix\in\{1,\,10,\,100,\,334\}$. Thus the four columns optimize the
\emph{same} finite-sum objective and differ only in temporal dependence.

\subsection{Compared Methods}\label{sec:exp-methods}

We compare four methods.
(i) \algname{} \emph{mixing-aware}: $\rho=\rho_0\sqrt{\hat\Lam}$ and
$\beta=2\hat\Lam\bar G_\sigma^2$, where
$\hat\Lam=\tmix(1+\lfloor\log_2 T\rfloor)$ and
$\bar G_\sigma=2\sqrt2\max_i\norm{a_i}$ is a computable upper bound on
the centered gradient noise. This follows only the parameter \emph{scaling}
prescribed by \cref{thm:tuned}, namely
$\rho\propto\sqrt{\tmix\log T}$ and
$\beta\propto\tmix\log T\,\bar G_\sigma^2$. It does not use the theorem's
absolute constants and is therefore not presented as the theorem-level tuned
instantiation.
(ii) \algname{} \emph{oblivious}: $(\rho,\beta)=(\rho_0,2\bar
G_\sigma^2)$, with no mixing input.
(iii) The base \basevar{} is fed the Markovian stream as if it were i.i.d.\
using single-sample batches, no MLMC, and no clipping. This ablation isolates
the estimator. (iv) Projected SGD follows the Markovian method in
\citep{alacaoglu2023convergence}, step $c\,D/(\Ghat\sqrt{t+1})$.
The calibration values $\rho_0$ and $c$ are selected on independent i.i.d.\
paths and then fixed across all four mixing settings. Projected SGD is
included as a statistical baseline rather than as a projection-cost
comparison. The experiment runs the clipped algorithm exactly as stated in
\cref{alg:main}, including the corner convention
$v_t=x_t\Rightarrow\etabar_t=0$.

\subsection{Parameter Settings}\label{sec:exp-params}

\algname{} is run for exactly $T+1=8301$ iterations, as in
\cref{alg:main}. With $j_{\max}=13$, its expected sample use is
$(T+1)(j_{\max}+2^{-j_{\max}})\approx1.08\times10^5$. The realized total
varies across capped-MLMC draws and is reported below. The sample-based
baselines run for $1.08\times10^5$ updates. The burn-in condition
$T\ge T_0$ of \cref{thm:reduction} holds in all four settings. Results
aggregate $10$ seeds, with the same ten random-number seeds used in every
mixing column. On five independent i.i.d.\ calibration paths (seeds
$100$--$104$, disjoint from the reporting seeds $0$--$9$), a grid search
selects $\rho_0=10^{-1}$ from
$\{10^{-3},3\!\times\!10^{-3},10^{-2},3\!\times\!10^{-2},10^{-1}\}$
for the base method and $c=10^{-1}$ from $\{10^{-1},1,10\}$ for projected
SGD\@. Both are then fixed across all reported settings. The naive base
ablation uses the base-paper default $\beta=100$. The clipping
radius is the rigorous a-priori bound
$\Ghat=\sqrt2\max_i\norm{a_i}\ge G$, independent of $\tmix$ and $T$.
Across the ten reported seeds, the capped-MLMC methods use
$(1.129\pm0.099)\times10^5$ samples (range
$9.61\times10^4$--$1.35\times10^5$) over their fixed horizon.

\begin{figure}[t]
\centering
\includegraphics[width=\linewidth]{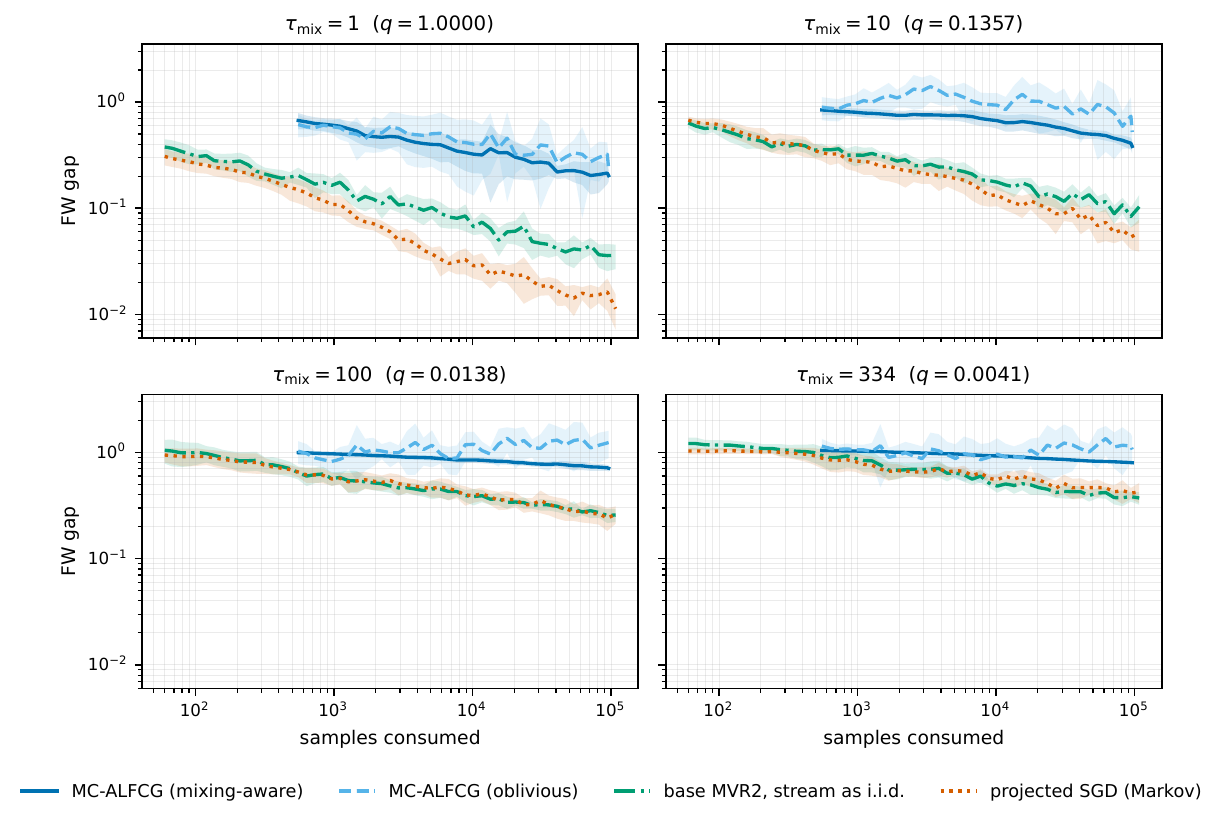}
\caption{FW gap \cref{eq:fwgap} versus samples consumed for four kernels
with a common uniform stationary law and different exact mixing times.
Lines are means over $10$ seeds. Bands are $\pm1$ standard deviation
(clipped below at one quarter of the mean for the logarithmic display).}
\label{fig:fwgap}
\end{figure}

\begin{table}[t]
\centering
\caption{Final-iterate FW gap (mean $\pm$ standard deviation over $10$
seeds). \algname{} uses a fixed horizon with expected
cost $1.08\times10^5$ samples. Baselines use $1.08\times10^5$ updates.
Middle block: mixing-aware variant run with inputs
$\max\{\lfloor\tmix/4\rfloor,1\}$ and $4\tmix$ (at $\tmix=1$ the former
coincides with the true value by construction).
Bottom block: ratio to each method's own i.i.d.\ value.}
\label{tab:final}
\small
\ifdefined\JOTAmode
\resizebox{\linewidth}{!}{%
\fi
\begin{tabular}{lcccc}
\toprule
 & $\tmix=1$ & $\tmix=10$ & $\tmix=100$ & $\tmix=334$ \\
\midrule
\algname{} (mixing-aware) & $0.183\pm0.039$ & $0.360\pm0.045$ & $0.688\pm0.046$ & $0.784\pm0.029$ \\
\algname{} (oblivious) & $0.205\pm0.046$ & $0.617\pm0.269$ & $1.023\pm0.281$ & $1.082\pm0.364$ \\
\basevar{}, stream as i.i.d. & $0.036\pm0.009$ & $0.103\pm0.030$ & $0.258\pm0.037$ & $0.372\pm0.051$ \\
projected SGD & $0.011\pm0.004$ & $0.059\pm0.020$ & $0.261\pm0.049$ & $0.423\pm0.091$ \\
\midrule
mixing-aware, $\tmix/4$ & $0.183\pm0.039$ & $0.339\pm0.061$ & $0.612\pm0.063$ & $0.651\pm0.050$ \\
mixing-aware, $4\tmix$ & $0.181\pm0.025$ & $0.461\pm0.043$ & $0.821\pm0.032$ & $0.917\pm0.016$ \\
\midrule
degradation, mixing-aware & $1.0\times$ & $2.0\times$ & $3.8\times$ & $4.3\times$ \\
degradation, oblivious & $1.0\times$ & $3.0\times$ & $5.0\times$ & $5.3\times$ \\
degradation, base as i.i.d. & $1.0\times$ & $2.9\times$ & $7.2\times$ & $10.3\times$ \\
degradation, projected SGD & $1.0\times$ & $5.2\times$ & $23.1\times$ & $37.4\times$ \\
\bottomrule
\end{tabular}
\ifdefined\JOTAmode
}%
\fi
\end{table}

\subsection{Results and Discussion}\label{sec:exp-results}

\emph{(a) Dependence sensitivity and absolute performance.}
Because all four chains share the same stationary distribution, variation
across columns reflects temporal dependence rather than a change in the
objective. As $\tmix$ increases from $1$ to $334$, the final gaps of the
mixing-aware and oblivious variants increase by factors of $4.3$ and $5.3$,
compared with $10.3$ for the naive base estimator and $37.4$ for projected
SGD (\cref{tab:final,fig:fwgap}). The multilevel variants therefore exhibit
the smallest relative degradation on this testbed. Their absolute final
gaps are nevertheless not the smallest: projected SGD performs best for
$\tmix\in\{1,10\}$, whereas the naive base method performs best for
$\tmix\in\{100,334\}$. These results support a robustness-to-dependence
interpretation, but not absolute empirical superiority.

The table compares the methods at their prescribed terminal iterations.
The multilevel methods use a random realized sample count, whereas the
baselines use the common expected budget $1.08\times10^5$. Consequently,
the table is not an exactly budget-matched comparison. The sample-resolved
curves in \cref{fig:fwgap} provide the appropriate within-budget view.

\emph{(b) Clipping frequency.} Across all
\algname{} runs, the clipping activation frequency
$\#\{t:\norm{\gpre}>\Ghat\}/(T+1)$ ranges from $1.20\times10^{-4}$ to
$3.61\times10^{-3}$ (mean $1.60\times10^{-3}$). Thus clipping is rarely
active on this testbed, while the pathwise bound it enforces remains a
necessary ingredient of the analysis.

\emph{(c) Sensitivity to the mixing input.} The $4\tmix$ input increases
the final gap in the dependent settings, whereas the
$\max\{\lfloor\tmix/4\rfloor,1\}$ input decreases it at this finite horizon
(\cref{tab:final}, middle block). Thus the theorem-level tuning should not
be interpreted as a finite-horizon constant calibration. The oblivious
variant removes the mixing-time input altogether at the exponent cost
quantified in \cref{thm:oblivious}.

\subsection{Nonconvex Composite Regression}\label{sec:exp-composite}

The second experiment exercises the full model rather than the smooth convex
special case of the classification study. For unit-norm
$a_i\in\R^{30}$, we consider
\begin{equation}\label{eq:composite-instance}
\min_{x\in[-3,3]^{30}}
\left\{
\Phi(x)
\coloneqq
\frac{1}{2n}\sum_{i=1}^{n}
  \big(\sin(a_i^\top x)-b_i\big)^2
+0.02\norm{x}_1
\right\},
\qquad n=800.
\end{equation}
The features are normalized Gaussian vectors. A five-sparse vector with
nonzero entries in $\{-2,2\}$ generates
$b_i=\operatorname{clip}(\sin(a_i^\top x^\star)+\xi_i,-1,1)$, where
$\xi_i\sim\mathcal N(0,0.1^2)$. The data seed is $19$. A deterministic
verification found a Hessian eigenvalue $-0.0200$ for the fixed finite-sum
smooth objective, while its directional-gradient finite-difference error was
$1.43\times10^{-11}$. Thus the instance numerically exercises smooth
nonconvexity, and $h(x)=0.02\norm{x}_1$ is nonzero, convex, and nonsmooth.

For a gradient vector $g$, the generalized LMO is separable:
\begin{equation*}
v_j(g)
=
\begin{cases}
-3, & g_j>0.02,\\
0, & |g_j|\le 0.02,\\
3, & g_j<-0.02.
\end{cases}
\end{equation*}
Moreover,
$|\sin(a_i^\top x)-b_i|\,|\cos(a_i^\top x)|\le2$, so
$\Ghat=2$ and $\bar G_\sigma=4$ are valid deterministic bounds. The same
lazy-refresh construction gives exact mixing times $1$, $100$, and $334$,
with a common uniform stationary distribution and the same finite-sum
objective.

We compare the mixing-aware clipped method, the same method without clipping,
the clipped oblivious variant, single-sample \basevar{} that treats the stream
as independent, and projected proximal SGD. Independent i.i.d. paths with
seeds $100$--$104$ select the interior grid minimizers $\rho_0=0.3$ and
proximal step multiplier $0.3$. Report seeds $0$--$9$ are never used for
calibration. Every entry in \cref{tab:composite} evaluates the last recorded
iterate at or before the common budget of $90{,}000$ consumed states.

\begin{figure}[t]
\centering
\includegraphics[width=\linewidth]{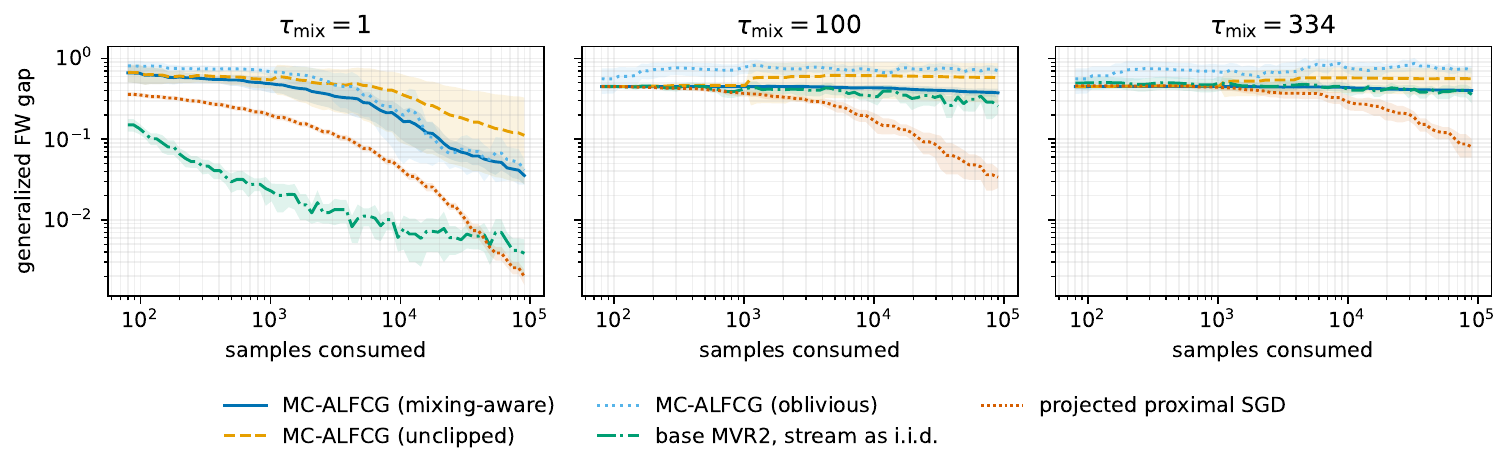}
\caption{Generalized Frank--Wolfe gap for the nonconvex composite instance
\cref{eq:composite-instance}. All methods are evaluated against consumed
Markov states. Lines and bands show means and one standard deviation over ten
paired seeds.}
\label{fig:composite-gap}
\end{figure}

\begin{table}[t]
\centering
\caption{Budget-matched generalized Frank--Wolfe gap at $90{,}000$ consumed
states for the nonconvex composite instance (mean $\pm$ standard deviation,
ten paired seeds). The common initial gap is $0.4481$.}
\label{tab:composite}
\small
\ifdefined\JOTAmode
\resizebox{\linewidth}{!}{%
\fi
\begin{tabular}{lccc}
\toprule
 & $\tmix=1$ & $\tmix=100$ & $\tmix=334$ \\
\midrule
\algname{} (mixing-aware) & $0.0355\pm0.0082$ & $0.3751\pm0.0481$ & $0.4001\pm0.0203$ \\
\algname{} (unclipped) & $0.1116\pm0.2166$ & $0.5783\pm0.2984$ & $0.5606\pm0.2023$ \\
\algname{} (oblivious) & $0.0416\pm0.0143$ & $0.7082\pm0.0659$ & $0.7272\pm0.0778$ \\
\basevar{}, stream as i.i.d. & $0.0038\pm0.0019$ & $0.2569\pm0.0509$ & $0.3579\pm0.0774$ \\
projected proximal SGD & $0.0020\pm0.0004$ & $0.0338\pm0.0091$ & $0.0799\pm0.0207$ \\
\bottomrule
\end{tabular}
\ifdefined\JOTAmode
}%
\fi
\end{table}

At the two dependent kernels, clipping lowers the gap in all ten paired
runs relative to the otherwise identical unclipped variant
(\cref{fig:composite-gap,tab:composite}). Along the unclipped trajectories,
the pre-clipping estimator norm exceeds $\Ghat=2$ on $3.40\%$ of iterations
on average and reaches $252.2$. The clipped trajectories activate the
safeguard on only $0.057\%$ of iterations on average. The two frequencies are
not directly interchangeable because clipping changes subsequent iterates.
The clipped method's paired deterioration relative to its own i.i.d. gap is
$11.1\times$ and $11.9\times$, compared with $76.7\times$ and $111.1\times$
for the naive base estimator. Nevertheless, projected proximal SGD has the
smallest absolute gap in every column. The experiment therefore supports the
clipping-stability and relative-dependence interpretations, not universal
empirical superiority. Twenty-nine of the thirty clipped runs finish below
the common initial gap. The retained exception is seed $7$ at
$\tmix=100$, whose terminal gap is $0.477$.

\end{document}